\documentclass{amsart}

\usepackage{amsmath, amsthm, amssymb}
\usepackage{mathtools}
\usepackage{comment}
 \usepackage{todonotes}
\usepackage{cite}
\usepackage{tikz}
\usetikzlibrary{positioning}

\theoremstyle{definition}

\newtheorem{thm}{Theorem}[section]

\newtheorem*{thm*}{Theorem}
\newtheorem{lemma}[thm]{Lemma}

\newtheorem{defn}[thm]{Definition}
\newtheorem{claim}[thm]{Claim}
\newtheorem{prop}[thm]{Proposition}
\newtheorem{cor}[thm]{Corollary}

\newtheorem{remark}[thm]{Remark}
\newtheorem{fact}[thm]{Fact}

\newtheorem{example}[thm]{Example}

\newtheorem{obs}[thm]{Observation}

\newcommand{\thistheoremname}{}
\newtheorem{genericthm}[thm]{\thistheoremname}

\renewcommand{\subset}{\subseteq}
\renewcommand{\P}{\mathbb{P}}

\newcommand{\rest}{\restriction}

\newcommand\force{\Vdash}

\newcommand\R{\mathbb{R}}
\newcommand\N{\mathbb{N}}
\newcommand\Z{\mathbb{Z}}
\newcommand\Q{\mathbb{Q}}
\newcommand\C{\mathbb{C}}

\DeclareMathOperator{\dom}{dom}

\DeclareMathOperator{\hod}{HOD}

\newcommand{\set}[2]{ \left\{ #1 :\, #2 \right\} }
\newcommand{\seqq}[2]{ \left( #1 :\, #2\right) }
\newcommand{\seq}[1]{ \left( #1 \right) }

\title{Ergodicity as an obstruction to classification by higher-rank invariants}

\author{Assaf Shani}
\address{Department of Mathematics and Statistics, Concordia University, Montreal, QC  H3G 1M8, Canada}
\email{assaf.shani@concordia.ca}
\urladdr{https://sites.google.com/view/assaf-shani/}

\date{\today}

\keywords{Borel reducibility,
Borel homomorphisms, Analytic equivalence relations, Complete classification, Classifying invariants, Friedman-Stanley jump, Hereditarily countable sets, Ergodicity.}

\subjclass[2010]{Primary: 03E15, 03E25, 03E75.}

\thanks{Research partially supported by NSERC grant RGPIN-2024-05827
.}

\begin{document}

\begin{abstract}
Given a collection of mathematical objects and a natural notion of equivalence (such as isomorphism), we want to find a complete classification.
This paper presents methods for proving anti-classification results, that is, when a complete classification is not possible using certain classifying invariants.

The problem of using real numbers as classifying invariants is well understood. In this case, the obstruction to classifiability is ergodicity. We consider classifiability by higher rank objects, such as countable sets of real numbers, and present a simple ergodicity-like obstruction for classifiability. This method is particularly useful when applied in various models of ZF where even weak fragments of the axiom of choice fail.
\end{abstract}
\maketitle

\section{Introduction}~\label{section: Introduction}

Let $E$ be an equivalence relation on $X$. A \textbf{complete classification} of $E$ is a map $c\colon X\to I$ satisfying
\begin{equation*}
    x\mathrel{E}y\iff c(x)=c(y).
\end{equation*}
The members of $I$ are said to be \textbf{classifying invariants} for $E$.
Given an equivalence relation $E$, one would like to classify it using the simplest possible invariants. 
For this to make sense, some restrictions must be placed on the classifying map $c$. The study of equivalence relations on Polish spaces provides a precise framework to study classification problem and calibrate their complexity.

Let $E$ be an equivalence relation on a Polish space $X$. $E$ is said to be \textbf{concretely classifiable} if there is a Polish space $Y$ and a Borel map $c\colon X\to Y$ which is a complete classification of $E$. Equivalently, we may assume that the space $Y$ is the space of reals $\mathbb{R}$, that is, $E$ admits real numbers as classifying invariants (see \cite[Section 5.4]{Gao09}).

Concrete classifiability is well understood. The canonical obstruction for concrete classification is \textbf{ergodicity}. Suppose $a\colon\Gamma\curvearrowright X$ is a continuous action of a countable group $\Gamma$ on a Polish space $X$. The action is \textbf{generically ergodic} if for any non-empty open sets $U,V\subset X$ there is $\gamma\in \Gamma$ so that $\gamma\cdot U \cap V \not =\emptyset$. Equivalently: if every invariant Borel subset of $X$ is either meager or comeager. Similarly, given a probability measure $\mu$ on $X$, the action is \textbf{$\mu$-ergodic} if any invariant Borel set is either measure 0 or measure 1.
Let $E_a$ be the induced orbit equivalence relation on $X$: $x\mathrel{E_a}y\iff\exists\gamma\in\Gamma (\gamma\cdot x = y)$.

\begin{fact}\label{fact: ergodic not concrete class}
    Suppose $a\colon \Gamma\curvearrowright X$ is generically ergodic, or $\mu$-ergodic. Then the induced orbit equivalence relation $E_a$ is not concretely classifiable.
\end{fact}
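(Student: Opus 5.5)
The plan is to argue by contradiction: assume a Borel complete classification exists and show that ergodicity forces it to be constant on a large set. The statement implicitly assumes the non-degenerate situation, namely that every orbit is meager (in the generic case), respectively $\mu$-null (in the measure case). Without this it is false: a transitive action on a countable discrete space is generically ergodic, and its single orbit is trivially classifiable. Since orbits of a countable group are countable, this assumption holds automatically when $X$ has no isolated points, respectively when $\mu$ is non-atomic. I will make this assumption explicit at the start.

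Suppose $c\colon X\to\mathbb{R}$ is Borel and satisfies $x\mathrel{E_a}y\iff c(x)=c(y)$. Fix a countable basis $\{U_n\}_{n\in\N}$ of $\mathbb{R}$. Each set $A_n=c^{-1}(U_n)$ is Borel, since $c$ is Borel. It is also $E_a$-invariant, because $c$ is constant on orbits. By generic ergodicity each $A_n$ is meager or comeager. Let $B_n=A_n$ if $A_n$ is comeager, and $B_n=X\setminus A_n$ otherwise. Then each $B_n$ is comeager, so $G=\bigcap_n B_n$ is comeager, being a countable intersection of comeager sets.

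Next I show that $c$ is constant on $G$. Take $x,y\in G$. For every $n$, either both points lie in $A_n$ or neither does, so $c(x)$ and $c(y)$ lie in exactly the same basic open sets $U_n$. Since $\mathbb{R}$ is $T_0$ (indeed Hausdorff), this forces $c(x)=c(y)$. Because $c$ is a complete classification, $x\mathrel{E_a}y$, so $G$ is contained in a single orbit $[x]_{E_a}$. That orbit is meager by the standing assumption, while $G$ is comeager in the nonempty Polish space $X$. This contradicts the Baire category theorem.

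The measure case is verbatim the same argument: replace meager/comeager by $\mu$-null/$\mu$-conull. Then $G$ has measure $1$ and is contained in a single orbit, which is a countable set and hence $\mu$-null when $\mu$ is non-atomic, a contradiction. The only real subtlety, and the step I would take care to state, is the non-degeneracy hypothesis above. The rest is the routine countable-basis argument. The argument also works for any second countable $T_0$ target $Y$ in place of $\mathbb{R}$, consistent with the remark that $Y$ may be taken to be $\mathbb{R}$.
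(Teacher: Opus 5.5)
Your proof is correct and follows the route the paper indicates for this fact. The paper derives it from Fact~\ref{fact: basic ergodic fact}: an $E_a$-invariant Borel map to $\R$ is constant on a comeager (resp.\ conull) set, which you prove by the standard countable-basis argument. Combining this with the countability of orbits finishes the argument. Your explicit non-degeneracy hypothesis (no isolated points, resp.\ $\mu$ non-atomic) is exactly what the paper tacitly uses when it says ``as each orbit is countable,'' and your counterexample correctly shows that this hypothesis is genuinely needed.
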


A canonical example of ergodicity is the equivalence relation $E_0$, defined on $2^\N$ as eventual equality between binary sequences. $E_0$ can be seen as the orbit equivalence relation induced by an action of $\bigoplus_{n\in\N} \Z_2$ on $2^\N$. This action is generically ergodic, and ergodic with respect to the coin-flipping measure on $2^\N$. 

\subsection{Borel reducibility}

Let $E$ and $F$ be equivalence relations on Polish spaces $X$ and $Y$ respectively. A map $f\colon X\to Y$ is said to be a \textbf{reduction} of $E$ to $F$ if for any $x_1,x_2\in X$, 
\begin{equation*}
    x_1\mathrel{E}x_2\iff f(x_1)\mathrel{F}f(x_2).
\end{equation*}
We say that $E$ is \textbf{Borel reducible} to $F$, denoted ${E}\leq_B{F}$ if there is a Borel measurable function which is a reduction of $E$ to $F$. Borel reducibility is the most central concept in the study of equivalence relations on Polish spaces.

Let $=_\R$ be the equality relation on $\R$. Note that $E$ is concretely classifiable if and only if ${E}\leq_B{=_\R}$.

\subsection{Classification by hereditarily countable sets}

An equivalence relation is \textbf{classifiable by countable structures} if it is Borel reducible to an isomorphism relation on a space of all countable $\mathcal{L}$-structures, for some countable language $\mathcal{L}$. (See \cite[12.3]{Kano08}, \cite[3.6]{Gao09}, \cite{Hjo00}). In this case, the Scott analysis provides a complete classification. The classifying invariants are Scott sentences, which can be coded as hereditarily countable sets.

The \textbf{Friedman-Stanley jump hierarchy} stratifies the above notion according to the rank of the hereditarily countable set.

Given an equivalence relation $E$ on $X$, the \textbf{Friedman-Stanley jump} of $E$ is the equivalence relation $E^+$ on $X^\N$ defined by
\begin{equation*}
    x\mathrel{E^+}y \iff \forall n \exists m (x(n)\mathrel{E} y(n)) \textrm{ and } \forall n \exists m (y(n) \mathrel{E} x(m)),
\end{equation*}
equivalently, if  $\set{[x(n)]_E}{n\in\N}=\set{[y(n)]_E}{n\in\N}$.
Friedman and Stanley~\cite{FS89} proved that this is a \emph{jump operator} on Borel equivalence relations. That is, $E <_B E^+$ for any Borel equivalence relation $E$.

\begin{example}\label{example: =+ unpinned}
    Let $=^+$ be $(=_\R)^+$, defined on $\R^\N$ by $x =^+ y\iff \set{x(n)}{n\in\N} = \set{y(n)}{n\in\N}$. The map $x\mapsto \set{x(n)}{n\in\N}$ is a natural complete classification of $=^+$ using countable sets of reals as classifying invariants.
\end{example}
A classification problem is considered \emph{classifiable using countable sets of reals as complete invariants} if it is Borel reducible to $=^{+}$; ``classifiable using countable sets of countable sets of reals as complete invariants'' if it is Borel reducible to $(=^+)^+$; and so on.
The iterated Friedman-Stanley jumps, $=^{+\alpha}$, are defined recursively, where $=^{+1}$ is $=^+$, $=^{+(\alpha+1)}$ is $(=^{+\alpha})^+$, and we take products at limit stages (see \cite[12.2.6]{Gao09}).

\begin{fact}[{Friedman~\cite[Theorem~1.5]{Friedman2000}}]
    A Borel equivalence relation which is classifiable by countable structures is Borel reducible to $=^{+\alpha}$ for some countable ordinal $\alpha$.
\end{fact}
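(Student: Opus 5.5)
The plan is to run the Scott analysis in a uniform, Borel way. Fix a Borel reduction $f$ of $E$ to $\cong_{\mathcal{L}}$ on $\mathrm{Mod}(\mathcal{L})$. For each countable ordinal $\alpha$, let $\equiv_\alpha$ be the usual back-and-forth (Scott) equivalence of rank $\alpha$ between pairs $(M,\bar a)$, $(N,\bar b)$ of countable structures with finite tuples. It is defined by: $\equiv_0$ means the tuples satisfy the same atomic formulas; $(M,\bar a)\equiv_{\alpha+1}(N,\bar b)$ iff for every $c\in M$ there is $d\in N$ with $(M,\bar a c)\equiv_\alpha(N,\bar b d)$ and conversely; and $\equiv_\lambda=\bigcap_{\alpha<\lambda}\equiv_\alpha$ at limits. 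Two facts are standard and I will take them as given. First, each $\equiv_\alpha$ is Borel, uniformly in a code for $\alpha$. Second, for countable structures, $M\cong N$ iff $M\equiv_\alpha N$ for all $\alpha<\omega_1$.

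Pulling back along $f$, define $x\mathrel{E_\alpha}y\iff f(x)\equiv_\alpha f(y)$. These relations are Borel, decrease with $\alpha$, each contains $E$, and $\bigcap_{\alpha<\omega_1}E_\alpha=E$. The first key step is to find a single countable $\alpha_0$ with $E=E_{\alpha_0}$. For this, consider the Borel set $\neg E\subset X\times X$. The map sending $(x,y)\in\neg E$ to the least $\alpha$ with $\neg(x\mathrel{E_\alpha}y)$ is well defined, since $\neg(x\mathrel{E}y)$ gives $f(x)\not\cong f(y)$. I will show that the relation ``$f(x)\not\equiv_\alpha f(y)$ with witness rank $\alpha$'' comes from a $\Pi^1_1$-rank on the coanalytic set $\{(M,N):M\not\cong N\}$. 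Concretely, this rank is the rank of a well-founded tree of back-and-forth failures: the tree of finite attempts to build an isomorphism is ill-founded iff $M\cong N$, and its rank bounds the least $\alpha$ where $\equiv_\alpha$ fails. Then $\neg E$ is a Borel, hence $\Sigma^1_1$, subset of $X\times X$ whose image under $f\times f$ lands inside this $\Pi^1_1$ set. By $\Sigma^1_1$-boundedness, the ranks are bounded by some $\alpha_0<\omega_1$. Hence every $E$-inequivalent pair is already $E_{\alpha_0}$-inequivalent, so $E=E_{\alpha_0}$. I expect this step to be the main obstacle, specifically matching the Scott rank $\alpha$ to the tree rank precisely enough (up to a constant or $\omega\cdot\alpha$ shift) that boundedness of one gives boundedness of the other.

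The second step is to reduce $\equiv_\alpha$ to an iterated Friedman--Stanley jump. By induction on $\alpha$, I will construct Borel maps $g_\alpha$ sending $(M,\bar a)$ to a point in the domain of $=^{+\beta(\alpha)}$, for some countable $\beta(\alpha)$ (e.g.\ $\beta(\alpha)=1+\alpha$), such that $(M,\bar a)\equiv_\alpha(N,\bar b)$ iff the images are equivalent.
\begin{itemize}
\item At stage $0$, the atomic type of $\bar a$ is a real, so it lands in $=_\R$, which reduces to $=^{+}$.
\item At successor stages, send $(M,\bar a)$ to the sequence $\seqq{g_\alpha(M,\bar a c_n)}{n\in\N}$, where $(c_n)$ enumerates $M$. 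By the definition of $\equiv_{\alpha+1}$ and of the jump, this reduces $\equiv_{\alpha+1}$ to $(=^{+\beta(\alpha)})^+$.
\item At limit stages, take the product of the earlier maps, matching the product clause in the definition of $=^{+\lambda}$.
\end{itemize}
The tuple length must be carried along, for instance by coding it into the invariant or by working with all lengths in parallel. Composing $x\mapsto g_{\alpha_0}(f(x),\emptyset)$ then gives a Borel reduction of $E=E_{\alpha_0}$ to $=^{+\alpha}$ for a countable $\alpha$, as claimed.
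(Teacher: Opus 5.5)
The paper does not prove this statement. It is quoted from Friedman, and the only hint of an argument is the remark in the introduction that the Scott analysis yields invariants coded by hereditarily countable sets.

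Your proof is correct and is a standard argument along exactly those lines. Your maps $g_\alpha$ code the iterated back-and-forth invariants of $(M,\bar a)$, which are hereditarily countable sets of rank about $\alpha$, as points in the domain of $=^{+(1+\alpha)}$. You only bound the level at which $E$-inequivalent pairs are separated. That is a priori weaker, and easier to obtain, than a bound on the Scott ranks of the structures $f(x)$, which a proof using full Scott sentences as invariants would need.

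The step you single out as the main obstacle is not one, provided the tree carries positional constraints. Let $T_{M,N}$ consist of the sequences $((a_0,b_0),\ldots,(a_{m-1},b_{m-1}))$ such that:
$\bar a$ in $M$ and $\bar b$ in $N$ satisfy the same atomic formulas;
$a_{2k}=k$ whenever $2k<m$;
$b_{2k+1}=k$ whenever $2k+1<m$.
These constraints are what make an infinite branch an isomorphism. The tree of all finite partial isomorphisms would already be ill-founded whenever there is an infinite partial isomorphism.

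Since $(M,N)\mapsto T_{M,N}$ is continuous, the family $\{T_{f(x),f(y)} : (x,y)\notin E\}$ is an analytic set of well-founded trees. $\Sigma^1_1$-boundedness then gives a countable $\beta$ such that every root in this family has rank below $\beta$ (terminal nodes having rank $0$).

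You then need only a one-sided inequality: if $(\bar a,\bar b)\in T_{M,N}$ and $(M,\bar a)\equiv_\alpha(N,\bar b)$, then the node $(\bar a,\bar b)$ has rank at least $\alpha$. This is an immediate induction on $\alpha$. Use the forth clause of $\equiv_{\alpha+1}$ at nodes of even length and the back clause at nodes of odd length. No $\omega\cdot\alpha$ correction arises.

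Applied at the root, this gives $f(x)\not\equiv_\beta f(y)$ whenever $(x,y)\notin E$, that is, $E=E_\beta$. The same induction also gives your second ``standard fact'' for free. If $M\not\cong N$, then $T_{M,N}$ is well-founded of some countable rank $\rho$, so $M\not\equiv_{\rho+1}N$.
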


\subsection{The main result}\label{Section: main result}

The goal of this paper is to provide a generalized ergodicity criterion which
\begin{itemize}
    \item implies unclassifiability by more complex invariants, such as sets of real numbers;
    \item is simple to apply, like the above ergodicity argument.
\end{itemize}

We state the main anti-classification theorem here, though the key concept of an \emph{ergodic name for a virtual class} is introduced later in Section~\ref{Section: generalized ergodicity}. (See Remark~\ref{remark: key idea for generalization and virtual classes} for more intuition.)

\begin{thm}[ZF]\label{thm: anti classification subsets of V}
 Let $\P$ be a forcing poset and $\tau$ a $\P$-name for a  virtual $E$-class. Assume that $\tau$ is unstable and $(\P,E)$-ergodic. Then there is no absolute complete classification $c\colon X \to I$ of $E$ such that in any $\P$-generic extension of $V$, the invariants in $I$ are subsets of $V$.
\end{thm}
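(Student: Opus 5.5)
The plan is to argue by contradiction, following the classical ergodicity argument of Fact~\ref{fact: ergodic not concrete class} with the generic extension in place of a comeager set. I will use the definitions of Section~\ref{Section: generalized ergodicity} as follows. A $\P$-name $\tau$ for a virtual $E$-class is interpreted in the extension as an $E$-class that need not meet the ground model. $(\P,E)$-ergodicity means that, for the product forcing, $\tau$ interpreted by two mutually generic filters yields the same virtual class: $\tau[G_1] \mathrel{E} \tau[G_2]$, with the class persisting to $V[G_1][G_2]$. Unstability means that $\tau[G]$ is not (equivalent to) a class already present in $V$.

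Suppose $c\colon X\to I$ is an absolute complete classification of $E$ such that, in every $\P$-generic extension, the invariants are subsets of $V$. Absoluteness means that $c$ is defined by a formula that still defines a complete classification of $E$ in any generic extension, including extensions by $\P\times\P$. It also means that the value of the invariant of a virtual class can be computed in any model containing a representative. So let $G$ be $\P$-generic over $V$, and set
\[
A = c(\tau[G]) \subseteq V,
\]
with $A\in V[G]$.

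Next take $G_1\times G_2$ generic for $\P\times\P$. In $V[G_1][G_2]$, ergodicity gives $\tau[G_1]\mathrel{E}\tau[G_2]$. Since $c$ is a complete classification there, $A_1 := c(\tau[G_1])$ and $A_2 := c(\tau[G_2])$ are equal. By absoluteness, $A_i$ is also the value computed in $V[G_i]$. So $A_1 \in V[G_1]$, $A_2 \in V[G_2]$, and both are subsets of $V$. Now apply the standard mutual genericity fact: $V[G_1]\cap V[G_2] = V$, and more precisely a set of ground-model elements lying in both $V[G_1]$ and $V[G_2]$ already lies in $V$. This gives $A\in V$.

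To see this concretely, I will fix a name $\dot A$ for $c(\tau[G])$. For each $a\in V$, mutual genericity and $A_1=A_2$ show that there are no conditions $p,q$ with $p\Vdash a\in\dot A$ and $q\Vdash a\notin\dot A$. Otherwise $(p,q)$ would force $A_1\neq A_2$; here I need that conditions are compatible across the product, which is automatic since $(p,q)\in\P\times\P$. Hence membership of each $a$ in $\dot A$ is decided by the empty condition, and $A=\{a : 1\Vdash a\in\dot A\}\in V$. I expect that working in ZF requires only that this definition is by separation over a set: $\dot A$ is forced to be a subset of some ground set, via a rank bound that can be collected in $V$.

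The final step derives the contradiction with unstability. Since $A\in V$ and $A$ is forced to be the invariant of $\tau$, I would like to conclude that $\tau$ is equivalent to a ground-model class. The difficulty is that the invariant $A$ might not be realized by any $x\in X^V$. Unstability must therefore be used in the form given in Section~\ref{Section: generalized ergodicity}, presumably that for every $x\in X^V$ it is forced that $\tau\not\mathrel{E} x$, or a formulation already stated in terms of invariants. I would handle this by working in $V[G_1][G_2]$. Both $\tau[G_1]$ and any hypothetical ground witness would have to be classified by $A$. Conversely, if no ground $x$ has $c(x)=A$, I would still aim to contradict unstability by showing that $\tau$ being forced to have a ground-model invariant is exactly the definition of stable.

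This is the step where the precise definitions matter most, and it is the main obstacle. My first attempt would be to show that "the invariant of $\tau$ lies in $V$" is equivalent to stability, given absoluteness of $c$.
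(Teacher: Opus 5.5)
There is a genuine gap: it comes from misreading both hypotheses. In the paper, $\tau$ is $(\P,E)$-ergodic if for any $p,q\in\P$ there is an automorphism $\gamma$ of $\P$ with $\gamma\cdot p$ compatible with $q$ and $\tau[G]\mathrel{E}\tau[\gamma\cdot G]$ for every generic $G$. This is a weak-homogeneity condition. And $\tau$ is unstable if $\tau[G]\not\mathrel{E}\tau[H]$ whenever $G,H$ are mutually generic.

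What you call ergodicity, $\P\times\P\force\tau_l\mathrel{E}\tau_r$, is exactly the paper's definition of a \emph{stable} virtual class, i.e.\ the negation of the unstability hypothesis. So your step ``in $V[G_1][G_2]$, ergodicity gives $\tau[G_1]\mathrel{E}\tau[G_2]$'' is false under the theorem's hypotheses (the opposite holds), and your argument never uses the automorphisms at all.

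A symptom of this is that your derivation of $A\in V$ never really needs the hypothesis that invariants are subsets of $V$. Every stable virtual class has a ground-model invariant by mutual genericity; that is how the paper extends $c$ to virtual classes. Your final step also cannot be completed under your reading of unstability. A stable virtual class with invariant in $V$ need not be realized by any $x\in X^V$ (that is the point of virtual classes), so no contradiction is available there.

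The missing idea is the homogeneity argument, and this is where the ``subsets of $V$'' hypothesis enters. Fix $a\in V$ and suppose $p\force\check a\in c(\tau)$ and $q\force\check a\notin c(\tau)$. Take $\gamma$ with $\gamma\cdot p$ compatible with $q$ and respecting $(E,\tau)$. Then $\gamma\cdot p\force\check a\in c(\gamma\cdot\tau)$. Since $\gamma$ respects $(E,\tau)$ and $c$ is absolutely $E$-invariant, $c(\gamma\cdot\tau)=c(\tau)$ is forced, which contradicts the compatibility of $\gamma\cdot p$ and $q$. Because $c(\tau)$ is forced to be a subset of $V$, it is determined by these membership decisions. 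Hence $\P\force c(\tau)=\check A$, where $A$ is the set of $a$ with $\P\force\check a\in c(\tau)$.

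Your mutual-genericity idea then finishes the proof, using the correct notion of unstability. For mutually generic $G,H$, clause (3) of absoluteness gives $c(\tau[G])=A=c(\tau[H])$ in $V[G][H]$. But $\tau[G]\not\mathrel{E}\tau[H]$, contradicting that $c$ is a complete classification there. No ground-model realization of $A$ is needed. Your closing intuition that ``invariant in $V$'' amounts to stability is in fact right, once stability means $\P\times\P\force\tau_l\mathrel{E}\tau_r$.
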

The theorem will be applied in one of the following two forms. Here $\mathcal{P}^\alpha$ denotes the iterated powerset operation, and $\mathcal{P}(\N)$ is identified with $\R$. Recall that $=^+$ admits classifying invariants which are subsets of $\mathcal{P}(\N)$. Similarly, $=^{+\alpha}$ admits classifying invariants which are subsets of $\mathcal{P}^{\alpha}(\N)$. 


\begin{thm}\label{thm: no classification P(N)}
     Let $\P$ be a forcing poset and $\tau$ a $\P$-name for a  virtual $E$-class. Assume that $\tau$ is unstable and $(\P,E)$-ergodic. Assume further that forcing with $\P$ adds no new members of $\mathcal{P}^{\alpha}(\N)$.    
    Then there is no absolute complete classification of $E$ using subsets of $\mathcal{P}^\alpha(\N)$ as invariants. In particular, ${E}\not\leq_B {=^{+\alpha}}$.
\end{thm}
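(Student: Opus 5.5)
The plan is to derive this as a direct consequence of Theorem~\ref{thm: anti classification subsets of V}, applied to $\P$ itself. Suppose toward a contradiction that $c\colon X\to I$ is an absolute complete classification of $E$ whose invariants are subsets of $\mathcal{P}^\alpha(\N)$. More precisely, $c$ is given by a definition that continues to define a complete classification of $E$ in any generic extension, and there it takes values among subsets of $(\mathcal{P}^\alpha(\N))^{V[G]}$. We work in a $\P$-generic extension $V[G]$. By hypothesis $\P$ adds no new members of $\mathcal{P}^\alpha(\N)$, so $(\mathcal{P}^\alpha(\N))^{V[G]}=(\mathcal{P}^\alpha(\N))^V\subset V$. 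Every invariant $c(x)$ computed in $V[G]$ is therefore a subset of $V$. This is exactly the situation that Theorem~\ref{thm: anti classification subsets of V} rules out, since $\tau$ is unstable and $(\P,E)$-ergodic. That contradiction proves the first claim.

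For the ``in particular'' clause, suppose $f$ is a Borel reduction of $E$ to $=^{+\alpha}$. I will compose $f$ with the canonical complete classification of $=^{+\alpha}$ by hereditarily countable sets. At the first level this is $x\mapsto\set{x(n)}{n\in\N}$, where reals are identified with subsets of $\N$. At successor levels we take the set of invariants of the coordinates, and at limit levels we take the sequence of invariants of the coordinates, suitably coded. This gives a complete classification $c$ of $E$ whose values are subsets of $\mathcal{P}^\alpha(\N)$, as recalled before the statement.

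The composite must be absolute. The function $f$ has a Borel code, so it is reinterpreted in $V[G]$. Since the statement ``$f$ is a reduction of $E$ to $=^{+\alpha}$'' is $\boldsymbol{\Pi}^1_2$ (or at least projective and absolute for analytic $E$), it remains true in $V[G]$ by Shoenfield absoluteness. This assumes $E$ is analytic, as in the paper's framework. The invariant map for $=^{+\alpha}$ is defined by a recursion that makes sense in any model, so the composite is absolute in the required sense.

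The main obstacle is the bookkeeping at limit stages. There the invariant is naturally a sequence of lower-rank invariants rather than a literal subset of $\mathcal{P}^\alpha(\N)$. It has to be coded, for instance as a set of pairs $(n,a)$ with $n\in\N$ and $a$ of lower rank, with naturals and pairs absorbed into $\mathcal{P}^\alpha(\N)$ by a fixed, absolute coding. Only that coding needs to be checked. Everything else is immediate from Theorem~\ref{thm: anti classification subsets of V}.
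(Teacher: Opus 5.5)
Your proposal is correct and matches the paper's proof: since $\P$ adds no new members of $\mathcal{P}^\alpha(\N)$, every invariant in a $\P$-generic extension is a subset of $V$, so Theorem~\ref{thm: anti classification subsets of V} applies directly. Your justification of the ``in particular'' clause is a step the paper leaves implicit. There you compose a Borel reduction with the canonical hereditarily countable invariants for $=^{+\alpha}$, use Shoenfield absoluteness for analytic $E$, and code the invariants at limit stages.
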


The typical application will be to conclude irreducibility to some $=^{+\alpha}$. In this paper we will focus on anti-classification by sets of reals, $\alpha=1$, in which case the conclusion is that ${E}\not\leq_B {=^+}$. Often times we can get a bit more out of Theorem~\ref{thm: anti classification subsets of V}:

\begin{thm}[ZF]\label{thm: no classification P(On)}
     Let $\P$ be a forcing poset and $\tau$ a $\P$-name for a  virtual $E$-class. Assume that $\tau$ is unstable and $(\P,E)$-ergodic. Assume further that forcing with $\P$ adds no new members of $\mathcal{P}^{\alpha}(\mathrm{On})$.    
    Then there is no absolute complete classification of $E$ using subsets of $\mathcal{P}^\alpha(\mathrm{On})$ as invariants. In particular, ${E}\not\leq_B {=^{+\alpha}}$.
\end{thm}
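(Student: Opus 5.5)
This should follow directly from Theorem~\ref{thm: anti classification subsets of V}. Suppose toward a contradiction that $c\colon X\to I$ is an absolute complete classification of $E$ whose invariants are subsets of $\mathcal{P}^\alpha(\mathrm{On})$. By absoluteness, the definition of $c$ makes sense in every $\P$-generic extension $V[G]$, and there $c$ still assigns to each $x$ a subset of $(\mathcal{P}^\alpha(\mathrm{On}))^{V[G]}$. Since $\P$ adds no new members of $\mathcal{P}^\alpha(\mathrm{On})$, we have $(\mathcal{P}^\alpha(\mathrm{On}))^{V[G]}=(\mathcal{P}^\alpha(\mathrm{On}))^V\subset V$. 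So in $V[G]$ every invariant in $I$ is a subset of $V$, which is exactly the hypothesis excluded by Theorem~\ref{thm: anti classification subsets of V}. This contradiction proves the first assertion.

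For the ``in particular'' clause, I would check that a Borel reduction $f$ of $E$ to $=^{+\alpha}$ yields such an absolute classification. Let $c_\alpha$ be the canonical invariant map for $=^{+\alpha}$, defined by recursion on $\alpha$. At successor stages, $c_{\beta+1}(x)=\set{c_\beta(x(n))}{n\in\N}$. At limit stages, we take the sequence of the coordinate invariants, coded as a set so that it lies in the appropriate level. Then $c=c_\alpha\circ f$ is a complete classification of $E$ in any model of ZF. Since $f$ is Borel with a fixed code and $c_\alpha$ is defined uniformly, $c$ is absolute in the sense the paper requires. Here a real is identified with a subset of $\omega\subset\mathrm{On}$, and I would show by induction that $c_\beta$ takes values in $\mathcal{P}^{\beta}(\mathrm{On})$. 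Since $\mathcal{P}^{\beta}(\N)$ sits naturally inside $\mathcal{P}^{\beta}(\mathrm{On})$, this gives subsets of $\mathcal{P}^\alpha(\mathrm{On})$ as invariants, and the first part applies.

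The main obstacle is bookkeeping rather than mathematics. The hypothesis ``no new members of $\mathcal{P}^\alpha(\mathrm{On})$'' has to match exactly the level at which the invariants' \emph{elements} live. I also need to confirm that the notion of ``absolute complete classification'' from the later section is satisfied by $c_\alpha\circ f$, meaning it is still a complete classification in $V[G]$. That follows because Borel reductions remain reductions in generic extensions, by $\mathbf{\Pi}^1_2$/Shoenfield-type absoluteness of the statement ``$f$ is a reduction'' for $=^{+\alpha}$ with $\alpha$ countable. If limit stages cause coding issues, I would choose the coding of $\mathcal{P}^\alpha(\mathrm{On})$ so that countable sequences of elements of lower levels are again elements of the right level. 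For example, code a sequence of length $\omega$ as a set of pairs $(n,a)$ via a pairing of ordinals.
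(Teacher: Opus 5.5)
Your argument is correct and is essentially the paper's route. Since $\P$ adds no new members of $\mathcal{P}^\alpha(\mathrm{On})$, in every $\P$-generic extension the invariants are subsets of $V$, which contradicts Theorem~\ref{thm: anti classification subsets of V}; the ``in particular'' clause comes from composing a Borel reduction with the canonical absolute classification of $=^{+\alpha}$. One small indexing slip: $c_\beta(x)$ is a \emph{subset} of $\mathcal{P}^\beta(\N)$, not a member of $\mathcal{P}^\beta(\mathrm{On})$ (e.g.\ $c_1(x)$ is a set of reals, not a set of ordinals), and that is in fact the version your next sentence uses.
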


We emphasize that the theorems hold in ZF since it is often most fruitful to apply these when working over a choiceless model, as shown in the examples below. For Theorem~\ref{thm: no classification P(On)} specifically, choice must fail in the ground model for the hypothesis to hold, e.g. adding no sets of ordinals via forcing.
The irreducibility or unclassifiability results which we care about are absolute, and therefore their truth does not depend on the model in which we apply the theorem.


\begin{remark}
    The main theorems above are an evolution of results from~\cite{Shani-reducibility-symmetric-models-21,Shani-countable-products, Anti-classification-CMMS,Allison-Shani-abelian-products}. In the examples below we reproduce a few results from these papers. 
    The scope of the main result here is less general than~\cite[Theorem~1.9]{Shani-reducibility-symmetric-models-21}, in the sense that it deals only with reducibility to $=^{+\alpha}$, yet more general than the techniques in~\cite{Allison-Shani-abelian-products}, as there is no restriction on $E$. The key benefits of the anti-classification criterion presented here is its simplicity, similarity to the usual ergodicity argument, and applicability. We believe that this presentation will be useful for future applications.
\end{remark}

\section{Preliminaries}

\subsection{Abstract classification}
In general, for a complete classification to be `reasonable', we would like to avoid the following two classifications:
\begin{enumerate}
    \item Appealing to the axiom of choice, there is a map choosing one element out of each equivalence class. This map is a complete classification, but does not provide any insights regarding the equivalence relation $E$.
    \item The map sending $x$ to its equivalence class $[x]_E=\set{y\in X}{x\mathrel{E}y}$, is a complete classification, which again does not provide meaningful insights.
\end{enumerate}

In this paper we will work with \textbf{absolute complete classifications}. This is meant to be a weak condition for a complete classification to be `reasonable', which always holds in specific examples (in particular, for classification by countable structures). See~\cite{Shani2024-ClassifyingInvariantsE1TailGenericReal} for a detailed discussion.

    \begin{defn}\label{defn: absolute classification}
Let $E$ be an equivalence relation on a Polish space $X$. Say that $c\colon X \to I$ is an \textbf{absolute complete classification} if
\begin{enumerate}
    \item The map $c\colon X\to I$ is defined in some set theoretic way, $c(x)=A\iff \psi(x,A)$, for some set theoretic formula $\psi$ (using some parameters).
    \item In any ZF extension $V\subset N$, $\psi$ still defines a map which is a complete classification of $E$.
    \item Given ZF models $V\subset N\subset W$, if $x$ and $A$ are in $N$ and $N\models \psi(x,A)$, then $W\models \psi(x,A)$.
\end{enumerate}
Similarly, a map $c\colon X\to I$ is an \textbf{absolute $E$-invariant map} if the it satisfies the above with `complete classification' replaced by `$E$-invariant', that is, $x_1\mathrel{E}x_2 \implies c(x_1) = c(x_2)$.
\end{defn}
\begin{remark}
    In the above definitions, the spaces $X$, $I$ and the map $c$ are interpreted according to their definition in the various models. For the members of these spaces, such as $x\in X$ and $A\in C$ in clause~(3), we consider these as fixed objects.
\end{remark}

\begin{example}
    For an isomorphism relation $\cong_{\mathcal{L}}$, the map
\begin{equation*}
    M\mapsto \varphi_M,
\end{equation*}
sending a model to its Scott sentence, is an absolute complete classification as above.
\end{example}
\begin{example}[Absolute classification for Friedman-Stanley jumps]\label{example: abs class for FS jumps}
 \,
\begin{itemize}
    \item The map $\R^\N \to \mathcal{P}_{\aleph_0}(\R)$, $x\mapsto \set{x(n)}{n\in\N}$, is an absolute complete classification of $=^+$.
    \item The map $(\R^\N)^\N \to \mathcal{P}_{\aleph_0}\mathcal{P}_{\aleph_0}(\R)$,
\begin{equation*}
     x\mapsto \set{\set{x(n)(m)}{m\in\omega}}{n\in\omega}
\end{equation*} is an absolute complete classification of $=^{++}$
\end{itemize}
\end{example}

\begin{remark}
    Borel reducibility ${E}\leq_B{=^+}$ gives an absolute complete classification of $E$ using $\mathcal{P}_{\aleph_0}(\mathbb{R})$, countable sets of reals, as classifying invariants. Recall that a map $f\colon X\to Y$ is a \textbf{homomorphism} from $E$ to $F$ if for any $x_1,x_2\in X$,
\begin{equation*}
x_1\mathrel{E}x_2 \implies f(x_1)\mathrel{F} f(x_2).    
\end{equation*}
We write $f\colon E\to_B F$ to denote that $f$ is a Borel homomorphism from $E$ to $F$. A Borel homomorphism $f\colon {E}\to_B {=^+}$ gives an absolute $E$-invariant map $X\to \mathcal{P}_{\aleph_0}(\mathbb{R})$.

\end{remark}

\begin{example}[Absolute classification for CBERs]\label{example: abs class for CBERs}
    Suppose $E$ is a countable Borel equivalence relation (CBER), that is, every $E$-class is countable. In this case, the map $x\mapsto [x]_E$ \emph{is} an absolute complete classification. The point is that the $E$-class $[x]_E$ does not change in generic extensions, and therefore condition (3) holds. 
\end{example}

In our main Theorems, Section~\ref{Section: main result}, we provide criteria to conclude that a certain absolute complete classification is impossible. The main corollary is the irreducibility to some Friedman-Stanley jump. In this context, the use of absolute complete classifications is mostly for clarity of the proofs.

Classification using countable ordinals (invariants in $\omega_1$), countable subsets of countable ordinals (invariants in ${\mathcal{P}_{\aleph_0}(\omega_1)}$), and so on, are also of interest. Proving non-classifiability using $\mathrm{On}, \mathcal{P}(\mathrm{On}),$ and so on (as in Theorem~\ref{thm: no classification P(On)}) excludes such classifications.

For example, the equivalence relation $E_{\mathrm{Ulm}}$ is an analytic relation that codes the “equality relation between countable sets of countable ordinals” (sometimes denoted $\mathrm{Id}(2^{<\omega_1})$).
In the context of non-Borel analytic equivalence relations, $E_{\mathrm{Ulm}}$ plays a role analogous to that of $=_\R$ among Borel equivalence relations. The equivalence relation $E_{\mathrm{Ulm}}^+$ then plays a role similar to that of $=^+$. In the context of non-Borel equivalence relations the common notion of reducibility is absolute ${\Delta}^1_2$-reducibility. Indeed `no absolute classification using invariants in $\mathcal{P}^2(\mathrm{On})$' implies no absolute $\Delta^1_2$-reduction to $E_{\mathrm{Ulm}}^{+}$.
For example, we show in Corollary~\ref{cor: =++ unclass sets of sets ord} below that $=^{++}$ is not classifiable using invariants in $\mathcal{P}^2(\mathrm{On})$.
This is a strengthening of the irreducibility to $=^+$.

\subsection{Ergodicity: A forcing point of view}\label{section: ergodicity forcing pov}
In this section we present the unclassifiability of $E_0$ through an equivalent forcing point of view. In this presentation, the generalization given in Section~\ref{Section: generalized ergodicity} is quite natural.

Fact~\ref{fact: ergodic not concrete class} is usually presented as a consequence of the following.
\begin{fact}\label{fact: basic ergodic fact}
Let $\Gamma$ be a countable group and $X$ a Polish space. Assume that $a\colon \Gamma\curvearrowright X$ a generically ergodic action. Then any $E_a$-invariant Borel map $X \to \R$ is constant on a comeager set.  

Similarly, in the measure theoretic context, if $\Gamma$ is a countable group, $a\colon \Gamma\curvearrowright X$ is a $\mu$-ergodic action on a standard measure space $(X,\mu)$, then any $E_a$-invariant Borel map $X \to \R$ is constant on a full measure set.

In either case, as each orbit is countable, we conclude that there is no Borel reduction from $E_a$ to $=_\R$.
\end{fact}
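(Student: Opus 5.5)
The statement to prove is Fact~\ref{fact: basic ergodic fact}: an $E_a$-invariant Borel map is constant on a comeager (respectively, full measure) set, and consequently there is no Borel reduction of $E_a$ to $=_\R$. I will do the generic case first and then indicate the changes for the measure case.

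Fix an $E_a$-invariant Borel map $f\colon X\to\R$ and a countable basis $\{U_n\}$ for the topology of $\R$. Each preimage $f^{-1}(U_n)$ is Borel. It is also $E_a$-invariant: if $x\mathrel{E_a}y$ then $f(x)=f(y)$, so $x$ and $y$ lie in the same preimages.

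By generic ergodicity, in the equivalent form stated in the introduction, each $f^{-1}(U_n)$ is either meager or comeager. To see this equivalence directly, take an invariant Borel set $B$ with the Baire property. If both $B$ and its complement were nonmeager, there would be nonempty open sets $U,V$ with $B$ comeager in $U$ and $B^c$ comeager in $V$. Choose $\gamma$ with $\gamma U\cap V\neq\emptyset$. Since $\gamma$ acts as a homeomorphism and $B$ is invariant, $B=\gamma B$ is comeager in $\gamma U$. Hence on the nonempty open set $\gamma U\cap V$ both $B$ and $B^c$ are comeager, which is impossible by the Baire category theorem.

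Let $C$ be the intersection of the comeager sets among the $f^{-1}(U_n)$ together with the complements of the meager ones. This is a countable intersection of comeager sets, so $C$ is comeager, and in particular nonempty. For $x,y\in C$ and every $n$, we have $f(x)\in U_n\iff f(y)\in U_n$. Since $\R$ is Hausdorff and $\{U_n\}$ is a basis, this forces $f(x)=f(y)$, so $f$ is constant on $C$.

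The measure case is identical, replacing meager/comeager by null/conull. The dichotomy is then exactly the definition of $\mu$-ergodicity, and countable intersections of conull sets are conull.

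For the final clause, suppose toward a contradiction that $f$ is a Borel reduction of $E_a$ to $=_\R$. Then $f$ is invariant, so it is constant on a comeager (or conull) set $C$. Because $f$ is a reduction, $C$ is contained in a single $E_a$-class. That class is countable, being an orbit of the countable group $\Gamma$.
\begin{itemize}
\item In the category case, a countable comeager set is impossible when $X$ has no isolated points, since then singletons are meager and $X$ would be meager.
\item In the measure case, a countable conull set is impossible when $\mu$ is atomless, since then singletons are null and $\mu(X)$ would be $0$.
\end{itemize}
These nondegeneracy hypotheses (no isolated points, respectively an atomless measure) are implicit in the statement. Without them the claim fails: take $X$ to be a single orbit. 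This step is the only real subtlety, and I would state the assumption explicitly. For example, if $x$ were isolated then $\{x\}$ would be open, and ergodicity would force the orbit of $x$ to be dense and open, hence comeager. The argument for constancy itself is routine.
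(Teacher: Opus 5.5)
Your proof is correct. The derivation of the meager/comeager dichotomy from topological transitivity, the intersection over a countable basis, and the final step are all sound.

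The paper quotes this Fact without proof. Its own argument for the analogous statement is the forcing one (Proposition~\ref{prop: no classification E0 P(On)} and Lemma~\ref{thm: general P ergodicity}), and the two arguments run in parallel:
your dichotomy for the invariant set $f^{-1}(U_n)$ corresponds to the paper's claim that no two conditions $p,q$ force $\check{a}\in c(\tau)$ and $\check{a}\notin c(\tau)$ respectively, and your choice of $\gamma$ with $\gamma U\cap V\neq\emptyset$ corresponds to its automorphism making $\gamma\cdot p$ compatible with $q$.

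The difference lies in how the pieces are assembled.
\begin{itemize}
\item You intersect countably many comeager sets. This requires the invariant to be determined by countably many invariant Borel sets; here that is supplied by a countable basis of $\R$. The route is elementary and self-contained.
\item The paper instead collects the decided values into a ground-model set $A$ with $\P\force c(\tau)=\check{A}$, and only then passes back to a comeager set. This needs $c$ to be absolutely defined, which is automatic for Borel maps. In exchange it handles invariants that are arbitrary sets of ordinals, or arbitrary subsets of the ground model, where no countable intersection is available. It is also the form that transfers to names for virtual classes and to choiceless ground models.
\end{itemize}

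Your caveat on the last clause is right and worth stating explicitly. The conclusion needs every orbit to be meager (respectively null). For a generically ergodic action with countable orbits, this is equivalent to $X$ having no isolated points (respectively $\mu$ vanishing on singletons). As you note, an isolated point has a dense open, hence comeager, orbit. The translation action of $\Z$ on itself is a counterexample in both the category and measure settings.

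In the paper this hypothesis reappears as the requirement in Theorem~\ref{thm: anti classification subsets of V} that the name be unstable. The example in Section~\ref{Section: generalized ergodicity} asserting that the Cohen generic is unstable for any generically ergodic action tacitly assumes the same nondegeneracy.
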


Recall that a real number may be identified as a subset of $\N$. The unclassifiability of $E_0$ is true in greater generality. For example, in $L(\R)$ there is no injective map from $2^\N / E_0$ into $\mathcal{P}(\alpha)$ for any ordinal $\alpha$. (See~\cite{Hjorth-1995-dichotomy-definable-universe}, \cite{Caicedo-Ketchersid-2010}. Also~\cite[7.7]{Shani2024-ClassifyingInvariantsE1TailGenericReal}.)

\begin{prop}\label{prop: no classification E0 P(On)}
    Any absolute invariant map from $E_0$ to $\mathcal{P}(\mathrm{On})$ must send a comeager subset of $2^\N$ to the same set of ordinals. 
    In particular, there is no absolute complete classification of $E_0$ using invariants in $\mathcal{P}(\mathrm{On})$. 
\end{prop}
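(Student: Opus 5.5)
Let $c\colon 2^\N\to\mathcal{P}(\mathrm{On})$ be an absolute $E_0$-invariant map, defined by $c(x)=A\iff\psi(x,A)$. The argument will use Cohen forcing $\P$, whose conditions are finite binary strings, so that a generic $x_G$ is an element of $2^\N$. Let $x_G$ be $\P$-generic over $V$ and pass to $V[x_G]$. There clause~(2) of Definition~\ref{defn: absolute classification} provides a set of ordinals $A=c(x_G)$ with $\psi(x_G,A)$.

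The first step is to show that $A\in V$. For any $n$, the real $x'$ obtained by flipping the $n$-th bit of $x_G$ satisfies $x'\mathrel{E_0}x_G$, so $c(x')=A$. Moreover, $x'$ is itself $\P$-generic over $V$ and $V[x']=V[x_G]$. The property ``$\alpha\in c(\dot x)$'' is then decided by some condition $p\subset x_G$. Flipping bits beyond $\dom(p)$ preserves $p$, and $E_0$-invariance then shows that the decision depends only on the length of $p$. More precisely, for any two conditions $p,q$ of the same length, the bit-flip automorphism of $\P$ sending $p$ to $q$ fixes the canonical name $\dot c$ of $c(\dot x)$. This holds because the automorphism changes the generic only by an $E_0$-equivalent real, and by clause~(2) $c$ is $E_0$-invariant in the extension. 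Consequently $p\force\check\alpha\in\dot c$ iff $q\force\check\alpha\in\dot c$. Any two conditions have extensions of equal length, so no two conditions can force opposite decisions about $\check\alpha\in\dot c$. Hence either $1\force\check\alpha\in\dot c$ or $1\force\check\alpha\notin\dot c$. It follows that $A=\set{\alpha}{1\force\check\alpha\in\dot c}$, computed in $V$ (this uses a bound on the ordinals involved, since $\dot c$ is a name for a set of ordinals), so $A\in V$. This homogeneity argument is the crux, and the main point of care is verifying that the automorphism fixes the name for $c(\dot x)$. That is exactly where absoluteness is needed: $c(\dot x)$ is defined by $\psi$ in $V[\dot x]$, and $V[\dot x]$ is unchanged by the automorphism.

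The second step is to pull this back to $V$. The statement ``$\psi(\dot x,\check A)$'' is forced by $1$, by the same homogeneity argument. By standard facts about Cohen forcing, the set $C$ of $x\in2^\N$ that are Cohen-generic over a suitable countable elementary submodel is comeager. For the ZF setting, we instead argue via a Baire-property argument on the set $\set{x}{c(x)=A}$, which is invariant and nonmeager. Using clause~(3), it then follows that for comeager many $x\in V$, $\psi(x,A)$ holds.

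To make the second step rigorous in ZF, I would argue as follows. Suppose $B=\set{x\in 2^\N}{c(x)=A}$ were not comeager. Then its complement would be nonmeager in some basic open set, which suggests the right route is to run the forcing argument directly: take any $x\in V$ generic over a model, and conclude $c(x)=A$ from clause~(3) applied to $V\subset V[x]\subset$ a larger extension. The cleanest version is to force over $V$, obtain $c(x_G)=A$ in $V[x_G]$, and note that by clause~(3) this remains true in $V[x_G][x_H]$. The expected obstacle is this transfer back to a comeager set in $V$ without choice, and I would handle it by citing the standard Baire-category characterization of Cohen genericity for definable sets.

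The ``in particular'' then follows. Since $E_0$ classes are countable while any comeager set is uncountable and meets many classes, the map $c$ cannot be injective on classes, so it cannot be a complete classification.
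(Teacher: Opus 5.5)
Your first step is correct and is essentially the paper's argument. The finite bit-flip automorphisms move the Cohen generic only inside its $E_0$-class, so by $E_0$-invariance of $c$ in $V[G]$ they fix $c(\dot x)$. Weak homogeneity then makes $1$ decide every statement $\check\alpha\in c(\dot x)$, so $1\force c(\dot x)=\check A$ for some $A\in V$. This is exactly what Claim~\ref{claim: levy hod in L[x]} supplies in the paper's proof.

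The second step, transferring this to a comeager set of ground-model reals, is a genuine gap, and none of your routes closes it. Nothing shows that $\set{x}{c(x)=A}$ is nonmeager or has the Baire property. A real in $V$ is not generic over $V$. Clause~(3) only pushes values up to larger models, so it says nothing about $c$ on ground-model reals. And in ZF, countable elementary submodels to force over need not be available.

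In fact the literal claim cannot be proved at this generality. In ZFC, let $U$ be a nonprincipal ultrafilter on $\N$, viewed as an $E_0$-invariant subset of $2^\N$, and set $c(x)=\{0\}$ if $x\in U$ and $c(x)=\emptyset$ otherwise, with $U$ as a parameter. Membership in the fixed set $U$ is absolute, and a real $E_0$-equivalent to a member of $U$ lies in $V$, hence in $U$. So $c$ is an absolute $E_0$-invariant map. Cohen forcing forces $c(\dot x)=\emptyset$. Yet $U$ and $2^\N\setminus U=\set{\N\setminus u}{u\in U}$ are homeomorphic via complementation and together cover $2^\N$, so both are nonmeager, and no comeager set is sent to a single value. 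A parameter-free example also breaks the transfer: over $V=L$, the map $c(x)=\{0\}$ iff $x\in L$ has forced value $\emptyset$, while every ground real gets $\{0\}$.

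The paper bridges this step by quoting the fact that $\P\force\varphi(\dot x)$ implies $\varphi(x)$ for comeager many $x$. That fact holds for Borel (suitably absolute) $\varphi$, not for arbitrary formulas. The robust content of the first sentence is therefore that every Cohen-generic real over $V$ is sent to the fixed $A\in V$.

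Your derivation of the ``in particular'' clause goes through the comeager set, so it inherits the gap. Derive it directly from your first step by mutual genericity, as in the paper's proof of Theorem~\ref{thm: anti classification subsets of V}. Take $x,y$ mutually Cohen-generic over $V$. Clause~(3) gives $c(x)=A=c(y)$ in $V[x][y]$, while a density argument gives $x\not\mathrel{E_0}y$. This contradicts clause~(2) in $V[x][y]$, with no transfer back to the ground model needed.
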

The proposition relies on the \emph{weak homogeneity} of Cohen forcing $\P$ (adding a member of $2^\N$ by finite approximations). Recall that a poset $\P$ is \textbf{weakly homogeneous} if for any two conditions $p,q\in\P$ there is an automorphism $\gamma$ of $\P$ so that $\gamma\cdot p$ is compatible with $q$. 
When forcing with a weakly homogeneous poset, the generic filter is highly undefinable. This is captured by the following well known fact, going back to early work on forcing by Feferman and Levy.\footnote{Let $x\in 2^\N$ be a Cohen generic real over $L$. Feferman showed that there is no definable well ordering of the reals in $L[x]$.
Levy showed that in fact any hereditarily ordinal definable set in $L[x]$ is in $L$, the proof of which is essentially Fact~\ref{fact: weak homogeneity}. See \cite[Section~7]{Kan06}.} 

\begin{fact}\label{fact: weak homogeneity}
    Assume that $\P$ is weakly homogeneous. Fix $G\subset \P$ a generic filter over $V$. If $Z\in V[G]$ is a subset of $V$ which is definable (in $V[G]$) using parameters in $V$, then $Z\in V$.
\end{fact}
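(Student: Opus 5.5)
The plan is the standard Levy--Feferman homogeneity argument. Let $\P$ be a forcing poset, $G\subset\P$ generic over $V$, and $\psi$ with parameters $\vec p\in V$ a formula such that in any $N\supseteq V$ it defines a map $c$ from a set of models to the class $V$. Assume $Z\in V[G]$, $Z\subset V$, and $Z=\set{z\in V}{V[G]\models\varphi(z,\vec a)}$ with $\vec a\in V$.

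First, since $Z\subset V$ and $Z\in V[G]$, the rank of $Z$ is some ordinal $\alpha$, so $Z\subset V_\alpha$. Put
\begin{equation*}
Z' = \set{z\in V_\alpha}{\exists p\in G\ (p\force \varphi(\check z,\check{\vec a}))}.
\end{equation*}
By the forcing theorem, $Z=Z'$. Next, fix any $p\in G$. I claim that for every $z\in V_\alpha$, $p$ decides $\varphi(\check z,\check{\vec a})$. Suppose not: then there are $q_1,q_2\le p$ with $q_1\force\varphi$ and $q_2\force\neg\varphi$. By weak homogeneity there is an automorphism $\gamma$ with $\gamma q_1$ compatible with $q_2$. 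Automorphisms fix check names and preserve the forcing relation, so $\gamma q_1\force \varphi(\check z,\check{\vec a})$. This contradicts $q_2\force\neg\varphi$.

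Actually the argument does not need to start below $p$. Apply the same reasoning to all of $\P$: for $q_1,q_2\in\P$ arbitrary, the same contradiction shows that the weakest condition $\mathbb 1$ decides $\varphi(\check z,\check{\vec a})$. Hence
\begin{equation*}
Z=\set{z\in V_\alpha}{\mathbb 1\force\varphi(\check z,\check{\vec a})}.
\end{equation*}
This set is definable in $V$ by the definability of the forcing relation, and it lies in $V$ by Separation. Note that $\alpha$ is an ordinal, hence in $V$.

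The only delicate step is the invariance of the forcing relation under automorphisms, $p\force\varphi(\vec\sigma)\iff \gamma p\force\varphi(\gamma\vec\sigma)$, together with $\gamma\check x=\check x$. This is proved by a routine induction on names and formulas. All of it goes through in ZF, since no choice is used in the forcing theorem for set forcing.
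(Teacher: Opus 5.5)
Your argument is correct. It is the standard Levy--Feferman homogeneity argument, which is exactly the reasoning the paper uses in its proof of Lemma~\ref{thm: general P ergodicity}; the paper itself states this Fact without proof. Automorphisms fix check names and preserve $\Vdash$, so no two conditions can force $\varphi(\check z,\check{\vec a})$ and its negation respectively, the top condition decides it, and $Z$ is then defined in $V$ via the forcing relation. Your rank bound $Z\subseteq V_\alpha$ supplies the set-hood that the paper leaves implicit. The opening sentence about $\psi$ defining a map $c$ is unrelated to this statement and should be deleted, and the intermediate paragraph working below a fixed $p\in G$ is redundant.
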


The following is a mild generalization of Fact~\ref{fact: weak homogeneity}, allowing the the parameter $[x]_{E_0}$ in definitions, as the relevant automorphisms preserve $[x]_{E_0}$. 

\begin{claim}\label{claim: levy hod in L[x]}
Let $x\in 2^\N$ be a Cohen generic real over $V$. Let $\sigma$ be a name so that it is forced that $\sigma$ is a subset of the ground model and that $\sigma$ is definable using $[x]_{E_0}$ and parameters from $V$. Then there is a set $Z\in V$ so that $\sigma = \check{Z}$ is forced.
\end{claim}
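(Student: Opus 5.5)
We run the standard weak homogeneity argument, using only automorphisms of Cohen forcing that fix the name for $[x]_{E_0}$.

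Let $\P = 2^{<\N}$ be Cohen forcing and $\dot x$ the canonical name for the generic real. The hypothesis gives a formula $\varphi$ and a parameter $a\in V$ such that it is forced that
\begin{equation*}
\sigma=\set{v\in \check V}{\varphi(v,[\dot x]_{E_0},\check a)}.
\end{equation*}
We take $Z=\set{v\in V}{\exists p\in\P\ (p\force \varphi(\check v,[\dot x]_{E_0},\check a))}$. Then $Z\in V$ by the definability of the forcing relation; formally one restricts $v$ to a set in $V$ containing all possible members of $\sigma$, which exists because $\sigma$ is a name. It remains to show that $\sigma=\check Z$ is forced. This reduces to the following: for each $v\in V$, either $\mathbf{1}\force\varphi(\check v,[\dot x]_{E_0},\check a)$ or $\mathbf{1}\force\neg\varphi(\check v,[\dot x]_{E_0},\check a)$. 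Given this dichotomy, if some $p$ forces $\varphi$ then so does $\mathbf 1$, so every generic extension satisfies $\sigma=Z$.

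To prove the dichotomy, suppose $p\force\varphi(\check v,[\dot x]_{E_0},\check a)$ and $q\force\neg\varphi(\check v,[\dot x]_{E_0},\check a)$. Extend $p$ and $q$ so that both have the same length $n$. Let $\gamma$ be the automorphism of $\P$ that replaces the first $n$ bits of $p$ by those of $q$ on conditions of length at least $n$. More uniformly, $\gamma$ flips bit $i$ for each $i<n$ with $p(i)\neq q(i)$; this acts on all of $2^{<\N}$, and $\gamma\cdot p=q$.

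Since $\gamma$ changes only finitely many coordinates, its induced action on names satisfies $\gamma\dot x = \dot x'$, where $\dot x'$ is forced to be $E_0$-equivalent to $\dot x$. Hence $\mathbf 1\force [\gamma\dot x]_{E_0}=[\dot x]_{E_0}$. Check names are fixed by $\gamma$. The symmetry lemma ($p\force\psi(\dot y)\iff \gamma p\force\psi(\gamma\dot y)$) therefore gives
\begin{equation*}
q=\gamma p\force\varphi(\check v,[\gamma\dot x]_{E_0},\check a),
\end{equation*}
and hence $q\force\varphi(\check v,[\dot x]_{E_0},\check a)$. This contradicts the choice of $q$.

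The main subtle point is the bookkeeping about names. The object $[\dot x]_{E_0}$ is defined in the extension from $\dot x$, so the formula $\varphi$ should really be read as $\varphi'(v,\dot x,a)$, meaning ``$\varphi$ holds of $v$, the $E_0$-class of $\dot x$, and $a$.'' This $\varphi'$ is invariant under replacing $\dot x$ by any real $E_0$-equivalent to it. With that rewriting, the symmetry lemma applies directly to $\dot x$, and no name for the class itself is needed. The rest follows the classical argument for Fact~\ref{fact: weak homogeneity}.
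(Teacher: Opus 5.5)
Your proof is correct and is essentially the argument the paper intends: the Levy--Feferman weak homogeneity argument, using only the finite bit-flip automorphisms of Cohen forcing, which fix $[\dot x]_{E_0}$. The paper calls this a mild generalization of Fact~\ref{fact: weak homogeneity} and runs the same argument in the proof of Lemma~\ref{thm: general P ergodicity}; your choice to apply the symmetry lemma to the defining formula rewritten in terms of $\dot x$, rather than to the name $\sigma$ (which $\gamma$ need not fix), is the right bookkeeping.
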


    

Cohen forcing $\P$ is equivalent to the topological version of Cohen forcing for the space $2^\N$. We will use the following fact: Let $\dot{x}$ be the name for the generic Cohen real added by $\P$. If $\P\force \varphi(\dot{x})$, for some formula $\varphi$ using ground model parameters, then $\varphi(x)$ is true, in the ground model, for a comeager set of reals.

\begin{proof}[Proof of Proposition~\ref{prop: no classification E0 P(On)}]
    Assume that $x\mapsto B_x$ is an absolute $E_0$-invariant map where $B_x$ is always a set of ordinals. 
    Let $x\in 2^\N$ be a Cohen generic real over $V$, and let $B = B_x$.
    Note that, in $V[x]$, $B$ is definable from $[x]_{E_0}$ and parameters in $V$. Specifically, $B$ is defined as the unique set so that $B = B_y$ for any $y\in [x]_{E_0}$.
    
    Since $B$ is a set of ordinals, we conclude from Claim~\ref{claim: levy hod in L[x]} that $B\in V$ and $B_{\dot{x}} = \check{B}$ is forced, and therefore there is a comeager set of $x\in 2^\N$ for which $B_x = B$. 
\end{proof}
We will repeat these ideas in much greater generality below, which will allow us to prove the main anti-classifiability results from the introduction.

\begin{remark}\label{remark: key idea for generalization and virtual classes}
The key idea is quite simple: the ergodicity (weak homogeneity) prevented a classification using classifying invariants which are subset of the ground model. Since forcing does not change the ordinals, we have that a set of ordinals cannot be a classifying invariant. (This is captured by Theorem~\ref{thm: anti classification subsets of V}.)

The way forward seems obvious now: To prevent classifying invariant which are sets of reals (sets of sets of ordinals), we just need any set of reals to be a subset of the ground model. This is guaranteed if the poset $\P$ does not add any reals! (This is captured by Theorem~\ref{thm: no classification P(N)}.) However, the whole point seems to be that the poset adds a new member of the underlying Polish space.

This point leads us to consider \emph{virtual equivalence classes}. While it is not possible to add a new equivalence class without adding reals, it will be quite natural to add virtual equivalence classes without adding reals. Moreover, the discussion above about weak homogeneity and anti-classification generalizes naturally. This is done in Section~\ref{Section: generalized ergodicity}.
\end{remark}
First we recall the definition of virtual equivalence classes.

\subsection{Virtual equivalence classes}

\begin{defn}[{See~\cite[17.1.2]{Kano08},\cite[2.1.1]{Larson-Zapletal-Geometric-Set-Theory}}]\label{defn: virtual class}
Let $\P$ be a forcing poset and $\tau$ and $\P$-name so that $\P\force \tau \in X$.
Say that $(\P,\tau)$ is a \textbf{stable virtual $E$-class} if 
\begin{equation*}
    \P\times \P \force \tau_l\mathrel{E}\tau_r.
\end{equation*}
Here $\tau_l, \tau_r$ are $\P\times\P$-names interpreting $\tau$ according to the left and right generic filters respectively.
\end{defn}
For applications of virtual classes, see also~\cite{Hjorth-1998-Absoluteness-principle}, \cite{Shani2026-Virtual-classes}.
\begin{example}
    For $x\in X$, and any poset $\P$, the pair $(\P,\check{x})$ is a stable virtual $E$-class.
\end{example}
So each member of $X$ represents a stable virtual $E$-class. Therefore the space of all stable virtual classes, `the virtual realm', can be seen as an extension of $X$. 
The equivalence relation $E$ can be extended to the virtual realm. This was first does by Larson and Zapletal.

\begin{defn}[Larson-Zapletal~{\cite[2.1.4]{Larson-Zapletal-Geometric-Set-Theory}}] Let $(\P,\tau)$ and $(\Q,\sigma)$ be stable virtual $E$-classes.
Say that $(\P,\tau)$ and $(\Q,\sigma)$ are \emph{$E$-equivalent}\footnote{In~\cite{Larson-Zapletal-Geometric-Set-Theory} the extension of $E$ to virtual classes is denoted by $\bar{E}$. Furthermore, there the pairs $(\P,\tau)$ are called \emph{pins}, and a \emph{virtual $E$-class} refers to the $\bar{E}$-equivalence class of $(\P,\tau)$.}, denoted $(\P,\tau)\mathrel{E}(\Q,\sigma)$ if
\begin{equation*}
    \P\times\Q\force \tau\mathrel{E}\sigma,
\end{equation*}
where we identify $\tau,\sigma$ with the corresponding $\P\times\Q$-names which interpret the left and right generics respectively.
\end{defn}

Furthermore, any absolute $E$-invariant map can be extended to the virtual realm. Assume that $c\colon X\to I$ is a an absolute $E$-invariant map and $(\P,\tau)$ is a stable virtual $E$-class. In this case, $\P\times \P \force c(\tau_l) = c(\tau_r)$. A mutual genericity argument shows that there must be some $B\in I$ so that $\P \force c(\tau) = \check{B}$.

\begin{defn}\label{defn: extending absolute map to virt class}
    Assume that $c\colon X\to I$ is a an absolute $E$-invariant map. We define $c$ on stable virtual $E$-classes by $c((\P,\tau)) = B$ for the set $B\in I$ so that $\P\force c(\tau) = \check{B}$.
\end{defn}
Note that if $c$ is an absolute complete classification of $E$ then the map $c$ is a complete classification of the extended equivalence relation $E$ on stable virtual $E$-classes.

\section{Generalized ergodicity}\label{Section: generalized ergodicity}
In Section~\ref{section: ergodicity forcing pov} we dealt with a name $\dot{X}$ for a \emph{member} of the space $X$. We now consider names for a \emph{virtual class}.

\begin{defn}
    Let $\P$ be a forcing poset, $\tau$ a $\P$-name for a stable virtual $E$-class.
    
    \begin{enumerate}
        \item Given an automorphism $\gamma$ of $\P$, say that $\gamma$ \emph{respects $(E,\tau)$} if $\tau[G] \mathrel{E} \tau[\gamma\cdot G]$ for any $\P$-generic $G$.  
        \item Say that $\tau$ is {\bf $(\P,E)$-ergodic} if for any $p,q\in\P$ there is an automorphism $\gamma$ of $\P$ so that $\gamma\cdot p$ is compatible with $q$ and $\gamma$ respects $(E,\tau)$.
        \item Say that $\tau$ is unstable if $\tau[G]\not\mathrel{E}\tau[H]$, whenever $G,H$ are mutually $\P$-generics.
    \end{enumerate}
\end{defn}
Recall that, for a Polish space $X$, \textbf{Cohen forcing on $X$} is the poset $\P$ of non-meager Borel sets, where $p$ extends $q$ if $p\setminus q$ is meager. Given a measure $\mu$ on $X$, \textbf{Random real forcing on $X$} is the poset $\P$ of all positive measure Borel sets, where $p$ extends $q$ if $p\setminus q$ is null. See~\cite[Section~26]{Jec03},~\cite{Zapletal-idealized-2008}. In both cases there is a natural name $\dot{x}$ for a generic member of $X$.
\begin{example}
Recall that each member $x$ of $X$ represents a stable virtual class. In particular, if $\dot{x}$ is a name for a member of $X$, then it may be considered as a name for a stable virtual class. This is the case in the following examples. 
\begin{enumerate}
    \item Suppose is $E$ is the equality equivalence relation on $X$ and $\tau$ is $\check{x}$ for some $x\in X$ in the ground model. Then $\tau$ is $(\P,E)$-ergodic if and only if $\P$ is weakly homogeneous.
    \item  Let $X$ be a Polish space and $a\colon\Gamma\curvearrowright X$ a generically ergodic action by homeomorphisms. Let $\P$ be Cohen forcing on $X$ and $\dot{x}$ the name for the generic Cohen real. Then $\dot{x}$ is unstable and $(\P,E_a)$-ergodic.
    \item Let $(X,\nu)$ be a standard measure space and $a\colon\Gamma\curvearrowright X$ an ergodic group action by measure preserving transformations. Let $\P$ be Random real forcing on $X$, $\dot{x}$ the name for the generic random real. Then $\dot{x}$ is unstable and $(\P,E_a)$-ergodic.
\end{enumerate}
\end{example}


In particular, for the poset $\P$ of all finite partial functions from $\N$ to $2$, which is equivalent to Cohen forcing on $2^\N$, the name $\dot{x}$ for the generic sequence in $2^\N$ is $(\P,E_0)$-ergodic. 
The following is a generalization of Proposition~\ref{prop: no classification E0 P(On)}.

\begin{lemma}[ZF]\label{thm: general P ergodicity}
    Let $\P$ be a forcing poset and $\tau$ a $(\P,E)$-ergodic $\P$-name for a  virtual $E$-class. If $c\colon X\to I$ is an absolute $E$-invariant map, where in any $\P$-generic extension $c(\tau)$ is a subset of the ground model, then there is a fixed $A\in I$ in the ground model so that $c(\tau) = \check{A}$ is forced by $\P$.
\end{lemma}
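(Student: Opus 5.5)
The plan is to mimic the proof of Proposition~\ref{prop: no classification E0 P(On)}, replacing weak homogeneity by the $(E,\tau)$-respecting automorphisms given by $(\P,E)$-ergodicity. Fix a $\P$-name $\dot B$ for $c(\tau)$; by assumption it is forced that $\dot B\subseteq V$. Define, in the ground model,
\begin{equation*}
A=\set{a}{\exists p\in\P\ (p\force \check a\in\dot B)}.
\end{equation*}
I will show that every condition decides each statement $\check a\in\dot B$ in the same way. Concretely, suppose toward a contradiction that $p\force \check a\in \dot B$ and $q\force\check a\notin\dot B$ for some $a$. By ergodicity choose an automorphism $\gamma$ of $\P$ with $\gamma\cdot p$ compatible with $q$ and $\gamma$ respecting $(E,\tau)$. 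Take a generic $G$ containing a common extension of $\gamma\cdot p$ and $q$. Then $H=\gamma^{-1}\cdot G$ is generic and contains $p$, and $V[H]=V[G]$.

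The key step is to compare $c(\tau[G])$ and $c(\tau[H])$ inside the single model $V[G]=V[H]$. Since $\gamma$ respects $(E,\tau)$, we have $\tau[H]\mathrel{E}\tau[\gamma\cdot H]=\tau[G]$. Because $c$ is an absolute $E$-invariant map, clause (2) of Definition~\ref{defn: absolute classification} applies in $V[G]$, giving $c(\tau[H])=c(\tau[G])$. Now $a\in \dot B[H]=c(\tau[H])$ since $p\in H$, while $a\notin \dot B[G]$ since $q\in G$. This is a contradiction.

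The main obstacle, and the point to handle with care, is that $\tau$ names a virtual class and not a member of $X$, so $c(\tau)$ is understood through Definition~\ref{defn: extending absolute map to virt class}, applied inside the extension. I would treat $\tau[G]$ as a stable virtual $E$-class in $V[G]$ and note that $c$ remains an absolute invariant map there, by absoluteness passed to further extensions. We must also check that $E$-equivalence of virtual classes gives equal $c$-values. This follows because $c$ is invariant on the extended relation, by a mutual genericity argument as in that definition. There is a further subtlety: "$\gamma$ respects $(E,\tau)$" is evaluated in $V[G]$, with $G$ generic over $V$, and I would simply read the definition that way. A secondary point is to avoid choice, which the argument never uses.

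Once every condition decides each $\check a\in\dot B$ uniformly, it follows that $\P\force \dot B=\check A$, using that $\dot B\subseteq V$ is forced. This needs a little care, since $a$ ranges over a proper class. However, $\dot B$ is a set name, so it is forced that $\dot B$ is a subset of some fixed ground set $V_\alpha$, and $A$ may be taken as a subset of that set. Finally, $A\in I$ follows because $c(\tau)\in I$ in the extension and $A$ equals it. With $I$ defined absolutely, $A$ then lies in $I$ of $V$; this is the sense in which the lemma is used.
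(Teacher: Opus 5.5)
Your proposal is correct and follows the paper's argument. You show that no two conditions can disagree about $\check a\in c(\tau)$ by using a $(E,\tau)$-respecting automorphism $\gamma$ with $\gamma\cdot p$ compatible with $q$, and you then read off $A$ from what is forced. The only difference is presentation: the paper gets the contradiction directly from the symmetry lemma, $\gamma\cdot p\force \check a\in c(\gamma\cdot\tau)=c(\tau)$, while you spell out the same step with the generic filters $G$ and $\gamma^{-1}\cdot G$.
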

As an immediate corollary we get the anti-classification criterion, Theorem~\ref{thm: anti classification subsets of V}.
\begin{proof}[Proof of Theorem~\ref{thm: anti classification subsets of V}]
    Let $c\colon X\to I$ be an absolute $E$-invariant map so that in any $\P$-generic extension of $V$, the invariants in $I$ are subsets of $V$. We show that $c$ is not a complete classification.
    
    By the lemma, there is a set $A$ in $V$ so that $\P$ forces that $c(\tau) = \check{A}$. Let $G, H$ be mutually $\P$-generics over $V$. Since $\tau$ is unstable, $\tau[G]\not\mathrel{E}\tau[H]$. However, $c(\tau[G]) = A = c(\tau[H])$, so $c$ is not an absolute complete classification. 
\end{proof}
\begin{proof}[Proof of Lemma~\ref{thm: general P ergodicity}]
     \begin{claim}
        For any $a$ in $V$ there are no two conditions $p,q\in\P$ so that 
        \begin{equation*}
            p\force \check{a}\in c(\tau), \quad q\force \check{a}\notin c(\tau).
        \end{equation*}
    \end{claim}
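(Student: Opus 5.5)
We argue by contradiction. Suppose $a\in V$ and $p,q\in\P$ satisfy $p\force \check{a}\in c(\tau)$ and $q\force \check{a}\notin c(\tau)$. By $(\P,E)$-ergodicity of $\tau$, fix an automorphism $\gamma$ of $\P$ such that $\gamma\cdot p$ is compatible with $q$ and $\gamma$ respects $(E,\tau)$. Let $r\le \gamma\cdot p, q$, and let $H$ be a $\P$-generic filter over $V$ containing $r$. Set $G=\gamma^{-1}\cdot H$. Since $\gamma$ is an automorphism of $\P$ in $V$, $G$ is also $\P$-generic over $V$, $V[G]=V[H]$, and $p\in G$ because $\gamma\cdot p\in H$.

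The key step is to compare $c(\tau[G])$ and $c(\tau[H])$ inside the common model $W=V[G]=V[H]$. Since $\gamma$ respects $(E,\tau)$ and $H=\gamma\cdot G$, we have $\tau[G]\mathrel{E}\tau[H]$. Here both $\tau[G]$ and $\tau[H]$ are interpreted as virtual classes, or as members of $X$ if $\tau$ names a real, and ``respects'' is understood accordingly. The map $c$ is an absolute $E$-invariant map in $W$, and by Definition~\ref{defn: extending absolute map to virt class} its extension to stable virtual classes is $E$-invariant. It follows that
\begin{equation*}
c(\tau[G]) = c(\tau[H]).
\end{equation*}
On the other hand, $p\in G$ gives $a\in c(\tau[G])$, while $q\in H$ gives $a\notin c(\tau[H])$. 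This is a contradiction.

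The main obstacle is ensuring that ``$c(\tau)$'' is evaluated the same way in $V[G]$ and $V[H]$, and that ``$p\force \check a\in c(\tau)$'' really transfers to $\tau[G]$. Because $V[G]=V[H]$ is literally the same model and the formula $\psi$ defining $c$ is absolute between ZF models by clause~(3) of Definition~\ref{defn: absolute classification}, the value of $c$ at a given (virtual) class is determined in that model. Moreover, the forcing theorem applies to both $G$ and $H$ separately. If $\tau$ names a genuine virtual class rather than a real, one must check that $E$-equivalent virtual classes receive the same $c$-value computed in $W$. This follows from the definition of $E$ on virtual classes together with a mutual-genericity argument over $W$, as in the remark preceding Definition~\ref{defn: extending absolute map to virt class}.

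Having established the claim, the lemma follows directly. Let $A=\set{a\in V}{\exists p\in\P\ (p\force \check a\in c(\tau))}$. By the claim, for each $a$ either every condition forces $\check a\in c(\tau)$ or none does, since otherwise some condition would force $\check a\notin c(\tau)$, yielding a pair $p,q$ as above. Because $c(\tau)$ is forced to be a subset of $V$, we obtain $\P\force c(\tau)=\check A$. There is one size issue: $c(\tau)$ must be bounded inside some $V_\alpha$, so that $A$ is a set. This holds because $c(\tau)$ is a set in the extension, and hence has bounded rank below some ordinal that can be read off from a name. Finally, $A\in I$ in the ground model by absoluteness of $c$, as in the mutual genericity argument for Definition~\ref{defn: extending absolute map to virt class}.
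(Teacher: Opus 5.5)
Your argument is correct and is essentially the paper's proof. The paper applies the automorphism directly to the forcing statement, getting $\gamma\cdot p\Vdash \check{a}\in c(\gamma\cdot\tau)=c(\tau)$, which contradicts $q\Vdash\check{a}\notin c(\tau)$ since $\gamma\cdot p$ and $q$ are compatible; you unfold that same symmetry step by pulling a generic $H\ni r$ back to $\gamma^{-1}\cdot H$, and your remarks on $E$-invariance of the extended map simply make explicit the equality $c(\gamma\cdot\tau)=c(\tau)$ that the paper uses tacitly.
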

    Given the claim, it follows that for any $a$ in the ground model, if there is some condition forcing $\check{a}\in c(\tau)$, then it is in fact forced by $\P$. We may define $A$ to be the set of all $a$ so that $\check{a}\in c(\tau)$ is forced, and conclude that $c(\tau) = \check{A}$ is forced by $\P$, as required.

    To prove the claim, assume towards a contradiction that there are two conditions $p,q\in\P$ so that $p\force \check{a} \in c(\tau)$ and $q\force \check{a}\notin c(\tau)$. By assumption, there is an automorphism $\gamma$ of $\P$ so that $\gamma\cdot p$ is compatible with $q$, and $\gamma$ respects $(E,\tau)$. It follows that $\gamma\cdot p \force \check{a}\in c(\gamma\cdot \tau) = c(\tau)$ and $q\force \check{a}\notin c(\tau)$, a contradiction as $\gamma\cdot p, q$ are compatible.
\end{proof}

\begin{remark}\label{remark:three generalizations}
There are three types of generalizations here, beyond what is needed for Proposition~\ref{prop: no classification E0 P(On)}, which will be crucial in the applications below:
\begin{enumerate}
    \item that $\tau$ is a virtual class;
    \item that invariants in $I$ are arbitrary subsets of the ground model (not just ordinals);
    \item that the result will be applied in specifically designed models of ZF.
\end{enumerate}
\end{remark}

\begin{proof}[Proof of Theorem~\ref{thm: no classification P(N)}]
    Assume for contradiction that $c\colon X \to I$ is a complete classification of $E$ where $I$ consists of subsets of $\mathcal{P}^\alpha(\N)$. By assumption, the invariants in $I$ are subsets of $V$ in any $\P$-generic extension. This contradicts Theorem~\ref{thm: anti classification subsets of V}.
\end{proof}
The proof of Theorem~\ref{thm: no classification P(On)} is similar.

\subsection{A slightly different presentation}

In this subsection we assume that $E$ admits an absolute complete classification (recall Definition~\ref{defn: absolute classification}). Fix an equivalence relation $E$ on $X$ and an absolute complete classification 
\begin{equation*}
    x \mapsto A_x.
\end{equation*}
\begin{defn}[Potential invariants]\label{defn: potential invariants}
Say that a set $A$ is a \textbf{potential invariant} for $E$ if in some forcing extension there is an $x$ in $X$ such that $A = A_x$. 
\end{defn}

\begin{obs}\label{obs: pot inv is virtual class}
Any potential invariant corresponds to a virtual $E$-class. Moreover, $E$-equivalence between potential invariants (thought of as virtual classes) is simply equality.
\end{obs}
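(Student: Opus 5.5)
Given a potential invariant $A$, fix a poset $\P$ and a $\P$-name $\tau$ with $\P\force \tau\in X$ and $\P\force A_\tau=\check{A}$. Such a pair exists by Definition~\ref{defn: potential invariants}: some condition forces that some $x$ has $A_x=\check A$, and we take $\tau$ to be a name for such an $x$, restricting $\P$ below that condition. The plan is to show that $(\P,\tau)$ is a stable virtual $E$-class, and then that two potential invariants are $E$-equivalent as virtual classes exactly when they are equal.

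\emph{Stability.} Let $G\times H$ be $\P\times\P$-generic over $V$. By clause (3) of Definition~\ref{defn: absolute classification}, applied to $V\subset V[G]\subset V[G\times H]$, the statement $\psi(\tau[G],A)$ holds in $V[G\times H]$. The same reasoning applied to $V[H]$ shows that $\psi(\tau[H],A)$ also holds there. So in $V[G\times H]$ we have $A_{\tau_l}=A=A_{\tau_r}$. By clause (2), the formula $\psi$ defines a complete classification in $V[G\times H]$, and therefore $\tau_l\mathrel{E}\tau_r$. Since $G\times H$ was arbitrary, $\P\times\P\force\tau_l\mathrel{E}\tau_r$, which is exactly Definition~\ref{defn: virtual class}. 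By construction, the extended map of Definition~\ref{defn: extending absolute map to virt class} sends $(\P,\tau)$ to $A$.

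\emph{Equivalence is equality.} Let $A$ be represented by $(\P,\tau)$ and $B$ by $(\Q,\sigma)$, and pass to a $\P\times\Q$-generic extension. Exactly as above, clause (3) gives $A_\tau=A$ and $A_\sigma=B$ there. Clause (2) then says that $\tau\mathrel{E}\sigma$ holds in that extension if and only if $A=B$. Because $A$ and $B$ are fixed ground-model sets, the statement $A=B$ does not depend on the generic. Hence $\P\times\Q\force\tau\mathrel{E}\sigma$ if and only if $A=B$, which means that $(\P,\tau)\mathrel{E}(\Q,\sigma)$ if and only if $A=B$. In particular, different representatives $(\P,\tau)$ of the same $A$ are $E$-equivalent, so the correspondence $A\mapsto[(\P,\tau)]_E$ is well defined and injective.

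The main subtlety is the repeated use of upward absoluteness (clause (3)) to move the identity $A_x=A$ from an intermediate model into the product extension. This relies on each generic extension being a ZF extension of the previous one. Everything else is bookkeeping with names.
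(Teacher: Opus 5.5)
Your argument is correct and is essentially the paper's: you make explicit the appeals to clauses (2) and (3) of Definition~\ref{defn: absolute classification}, where the paper simply asserts stability. You also prove the direction $A=B\Rightarrow(\P,\tau)\mathrel{E}(\Q,\sigma)$, which the paper does not write out.

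One small ZF point: you extract $\tau$ from a condition forcing $\exists x\,(A_x=\check{A})$, which uses the maximum principle, and that can fail without choice. Instead, take $\tau$ to be a name for the given witness $x$ in $V[G]$, and restrict $\P$ below some $p\in G$ forcing $\tau\in X\wedge A_\tau=\check{A}$.
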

\begin{proof}
Let $A$ be a potential invariant. There is a poset $\P$ and a $\P$-name $\tau$ so that $\P\force A_x = A$. It follows that $(\P,\tau)$ is a stable virtual $E$-class.

Let $A_1, A_2$ be potential invariants, and assume that the corresponding virtual $E$-classes $(\P_1,\tau_1)$, $(\P_2,\tau_2)$ are $E$-equivalent, that is, $\P_1\times\P_2$ forces that $\tau_1\mathrel{E}\tau_2$. It follows that $\P_1\times\P_2$ forces that $\check{A}_1 = A_{\tau_1} = A_{\tau_2} = \check{A}_2$, and so $A_1 = A_2$.  
\end{proof}

\begin{defn}
    Let $\P$ be a forcing poset, $\tau$ a $\P$-name. 
    \begin{enumerate}
        \item Say that $\tau$ is {\bf $\P$-ergodic} if for any $p,q\in\P$ there is an automorphism $\gamma$ of $\P$ so that $\gamma\cdot p$ is compatible with $q$ and $\gamma$ preserves $\tau$, that is, for any $\P$-generic $G$, $\tau[G] = \tau[\gamma\cdot G]$.
        \item Say that $\tau$ is {\bf non-trivial} if there are two generic filters $G,H$ so that $\tau[G] \neq \tau[H]$. Equivalently, if $\tau$ is not forced to be a member of the ground model.
    \end{enumerate}
\end{defn}
\begin{obs}
    Let $\tau$ be a name for a potential invariant. Let $\sigma$ be the name for the virtual class corresponding to $\tau$ as in Observation~\ref{obs: pot inv is virtual class}. Then
    \begin{enumerate}
        \item $\tau$ is $\P$-ergodic if and only if $\sigma$ is $(\P,E)$-ergodic.
        \item $\tau$ is non-trivial if and only if $\sigma$ is unstable.
    \end{enumerate}
\end{obs}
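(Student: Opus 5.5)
The plan is to unpack the two sides of each equivalence using the link fixed in Observation~\ref{obs: pot inv is virtual class}. There, $\sigma$ is a $\P$-name for a member of $X$ such that $\P\force A_\sigma = \tau$. In what follows, $A_{(\cdot)}$ denotes the absolute complete classification fixed in this subsection, and $\tau$ is forced to be a potential invariant. The central fact I will use is: for any two generic filters $G,H$ (or mutually generic $G,H$) over $V$, viewed in a common extension $W$,
\begin{equation*}
    \sigma[G]\mathrel{E}\sigma[H] \iff A_{\sigma[G]} = A_{\sigma[H]} \iff \tau[G]=\tau[H].
\end{equation*}
The first equivalence holds because $A_{(\cdot)}$ is still a complete classification in $W$, by clause~(2) of Definition~\ref{defn: absolute classification}. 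The second holds because $A_{\sigma[G]}=\tau[G]$ is true in $V[G]$ and, by clause~(3), remains true in $W$.

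For item~(1), take an automorphism $\gamma$ of $\P$ and note that $\gamma\cdot G$ is generic whenever $G$ is, with $V[G]=V[\gamma\cdot G]$. Applying the central fact with $H=\gamma\cdot G$ shows that $\gamma$ preserves $\tau$ exactly when $\gamma$ respects $(E,\sigma)$. The definitions of $\P$-ergodic and $(\P,E)$-ergodic differ only in these two clauses, since both otherwise require that $\gamma\cdot p$ be compatible with $q$. Hence the two notions coincide.

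For item~(2), suppose first that $\sigma$ is unstable. Take mutually generic $G,H$. Then $\sigma[G]\not\mathrel{E}\sigma[H]$, so $\tau[G]\neq\tau[H]$, and $\tau$ is non-trivial. Conversely, suppose $\sigma$ is not unstable. Then some mutually generic pair $G,H$ satisfies $\sigma[G]\mathrel{E}\sigma[H]$, and therefore $\tau[G]=\tau[H]\in V[G]\cap V[H]$. I expect the main obstacle to be upgrading this single pair to the statement that $\tau$ is forced to equal a ground model set. For this I will pick conditions $(p,q)\in\P\times\P$ forcing $\tau_l=\tau_r$. Then for any $a\in V$ we cannot have $p'\le p$ forcing $\check a\in\tau$ while $q'\le q$ forces $\check a\notin\tau$, since $(p',q')$ would then force $\tau_l\neq\tau_r$. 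So below $p$ (and below $q$), $\tau$ is decided pointwise in the same way, and $\tau$ is forced to be a fixed set $\check A$ whenever $\tau$ is forced to be a subset of $V$.

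This argument only gives triviality below a condition, and it needs $\tau$ to be a subset of $V$. To remove the restriction to a condition I will use that ergodicity is the standing context; alternatively I will read ``unstable'' in the equivalent local sense, namely that $\P\times\P\force \sigma_l\not\mathrel{E}\sigma_r$. I will also read ``non-trivial'' as ``not forced to lie in $V$''. If $\tau$ is not a subset of the ground model, I will instead argue by $\in$-induction on names, using the same decision argument hereditarily. If this becomes delicate, I will state item~(2) under the hypothesis of item~(1), where the ergodicity automorphisms transfer the conclusion below $p$ to all of $\P$.
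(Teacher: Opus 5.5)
The paper states this observation without proof. Your plan of reducing both items to ``the $\sigma$-values are $E$-equivalent iff the $\tau$-values are equal'' is the right one, but your central fact is set up for the wrong object. A potential invariant is realized as $A_x$ only in some \emph{further} extension. So the $\sigma$ of Observation~\ref{obs: pot inv is virtual class} is a $\P$-name for a pin, $\sigma[G]=(\Q,\rho)\in V[G]$ with $\Q\force A_\rho=\tau[G]$, and not a name for a point of $X$. In the paper's own warm-up for $=^{++}$, $B=\{H\subseteq\R : H\triangle G \textrm{ countable}\}$ is uncountable in $V[G]$, whereas invariants of $=^{++}$ are countable. Hence there is no $\P$-name $\sigma$ for a member of $X$ with $\P\force A_\sigma=\tau$. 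Replace the central fact by its version for pins. Let $\sigma[G]=(\Q_1,\rho_1)$, $\sigma[H]=(\Q_2,\rho_2)$, let $W$ be a common extension, and force with $\Q_1\times\Q_2$ over $W$. Clause~(3) lifts $A_{\rho_1}=\tau[G]$ and $A_{\rho_2}=\tau[H]$ from $V[G][K_1]$ and $V[H][K_2]$ to $W[K_1\times K_2]$. Clause~(2) there then gives $\rho_1\mathrel{E}\rho_2\iff\tau[G]=\tau[H]$. In item~(1), where $V[G]=V[\gamma\cdot G]$, this is just the second half of Observation~\ref{obs: pot inv is virtual class} applied inside $V[G]$. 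With this change, your proofs of (1) and of ``unstable $\Rightarrow$ non-trivial'' go through.

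The direction ``non-trivial $\Rightarrow$ unstable'' is the real problem: as stated it is false. Let $E$ be equality on $2^\N$ with $A_x=x$, and let $\P$ be Cohen forcing with generic real $\dot x$. Let $\tau$ equal $\dot x$ if $\dot x(0)=1$, and a fixed ground model real $z$ otherwise; here one may take $\sigma=\tau$. Then $\tau$ is non-trivial under both of the paper's readings. Yet the condition $\langle 0\rangle$ forces $\tau=\check z$, so $(\langle 0\rangle,\langle 0\rangle)\force\tau_l=\tau_r$ and $\sigma$ is not unstable. Your proposed reinterpretations do not rescue this. The statement $\P\times\P\force\sigma_l\not\mathrel{E}\sigma_r$ is just the definition of unstable, ``not forced to lie in $V$'' holds for this $\tau$, and ergodicity is not a standing hypothesis of the observation. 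What works is your last fallback, which is also all that Theorem~\ref{thm: general P ergodicity with class invs} needs: assume $\tau$ is $\P$-ergodic. Use $(p,q)\force\tau_l=\tau_r$ together with $V[G]\cap V[H]=V$ for mutually generic $G,H$. This intersection fact holds in ZF by $\in$-induction on rank, and is exactly your hereditary decision argument. It yields some $p'\le p$ and $A\in V$ with $p'\force\tau=\check A$. If $\gamma$ preserves $\tau$ then $\gamma\cdot p'\force\tau=\check A$. So ergodicity makes the conditions forcing $\tau=\check A$ dense, and $\tau$ is trivial. Alternatively, reading non-trivial as ``every condition forces $\tau\notin\check V$'' makes (2) hold without ergodicity. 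Either way, state the extra hypothesis explicitly rather than leaving the choice open.
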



As a corollary we recast our main theorem from Section~\ref{Section: main result} in terms of potential invariants.

\begin{thm}[ZF]\label{thm: general P ergodicity with class invs}
    Let $\P$ be a forcing poset, $\dot{A}$ a non-trivial $\P$-name for a potential invariant for $E$. Assume that $\dot{A}$ is $\P$-ergodic. 
    \begin{enumerate}
        \item Then there is no absolute complete classification $c\colon X \to I$ of $E$ such that in any $\P$-generic extension of $V$, the invariants in $I$ are subsets of $V$.
        \item Assume further that forcing with $\P$ adds no new members of $\mathcal{P}^{\alpha}(\N)$.    
        Then there is no absolute complete classification of $E$ using subsets of $\mathcal{P}^\alpha(\N)$ as invariants. In particular, ${E}\not\leq_B {=^{+\alpha}}$.
    \end{enumerate}
\end{thm}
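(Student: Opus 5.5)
The plan is to reduce Theorem~\ref{thm: general P ergodicity with class invs} to Theorem~\ref{thm: anti classification subsets of V} and Theorem~\ref{thm: no classification P(N)} by passing from the name $\dot{A}$ to a name for the corresponding virtual class. By Definition~\ref{defn: potential invariants} and Observation~\ref{obs: pot inv is virtual class}, $\P$ forces that $\dot{A}$ is a potential invariant. So in the extension $V[G]$ there is a poset $\Q$ and a $\Q$-name $\rho$ with $\Q\force A_\rho=\dot{A}[G]$.

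Working in $V$, I would choose these by a name: let $\dot\Q$ and $\dot\rho$ be $\P$-names picking such a witness. This may need a fullness argument, or in ZF a uniform bound on the rank of the witnessing posets, since we cannot freely choose. Then set $\sigma$ to be the $\P$-name for the stable virtual class $(\dot\Q,\dot\rho)$. The pair $(\P*\dot\Q,\dot\rho)$ is then an honest virtual object over $V$, although the paper's $\P$-name for a virtual class presumably means exactly $\sigma$ as evaluated in $V[G]$.

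Next I apply the preceding Observation. Since $\dot{A}$ is $\P$-ergodic, $\sigma$ is $(\P,E)$-ergodic: an automorphism $\gamma$ preserving $\dot{A}$ gives $\sigma[G]$ and $\sigma[\gamma G]$ with the same invariant, hence they are $E$-equivalent by Observation~\ref{obs: pot inv is virtual class}. Since $\dot{A}$ is non-trivial, $\sigma$ is unstable. This holds because if $G,H$ are mutually generic and $\sigma[G]\mathrel{E}\sigma[H]$, then $\dot A[G]=\dot A[H]$, which lies in $V[G]\cap V[H]=V$, so $\dot A$ would be forced into the ground model. With these two facts in hand, part (1) follows verbatim from Theorem~\ref{thm: anti classification subsets of V}, and part (2) from Theorem~\ref{thm: no classification P(N)}.

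The main obstacle is making sense of ``a $\P$-name for a virtual class'' and using it in Lemma~\ref{thm: general P ergodicity}. It is cleaner to bypass $\sigma$ and argue directly with $\dot{A}$, following the proof of that lemma. Suppose $c$ is an absolute complete classification with invariants that are subsets of $V$ in $\P$-extensions. I take $c$ to be the given classification $x\mapsto A_x$, or compose with it via the extension of Definition~\ref{defn: extending absolute map to virt class}. Then $\dot{A}$ is forced to be a subset of $V$.

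Now fix $a\in V$ and suppose $p\force \check a\in\dot A$ and $q\force\check a\notin\dot A$. Choose $\gamma$ with $\gamma p$ compatible with $q$ and $\gamma\dot A=\dot A$. Then $\gamma p\force \check a\in\dot A$, a contradiction. Hence $\dot A=\check A$ is forced for $A=\set{a}{\force \check a\in\dot A}$, contradicting non-triviality. For (2), the hypothesis that $\P$ adds no new members of $\mathcal{P}^\alpha(\N)$ ensures that invariants which are subsets of $\mathcal{P}^\alpha(\N)$ are subsets of $V$. Finally, ${E}\le_B{=^{+\alpha}}$ would yield such an absolute classification by Example~\ref{example: abs class for FS jumps}.
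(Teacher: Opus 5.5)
Your first route is the paper's own: there the theorem is simply read off from Theorems~\ref{thm: anti classification subsets of V} and~\ref{thm: no classification P(N)}. One passes from $\dot A$ to the name $\sigma$ of its virtual class, using Observation~\ref{obs: pot inv is virtual class} and the observation right after it. You supply the verification of that second observation, which the paper leaves implicit.

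One step in that verification is too quick. A single mutually generic pair with $\dot A[G]=\dot A[H]$ only shows that this particular value lies in $V$. To contradict non-triviality, take $(p,q)$ forcing $\dot A_l=\dot A_r$ and deduce that $p\force\dot A=\check B$ for one fixed $B$. Then use $\P$-ergodicity to make the conditions forcing this dense.

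Your worry about choosing $(\dot\Q,\dot\rho)$ by names in ZF is avoidable. Only the value of $c$ on the virtual class is ever used. In $V[G]$ that value is definable from $D=\dot A[G]$ and ground parameters: it is the unique $C$ such that some pair $(\Q,\rho)$ with $\Q\force A_\rho=\check D$ satisfies $\Q\force c(\rho)=\check C$. By the absoluteness clauses, every such pair then gives the same $C$. Being definable from $\dot A$, this name is automatically preserved by every automorphism that preserves $\dot A$.

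The ``cleaner'' direct argument, however, has a genuine gap. $\dot A$ is a potential invariant for the \emph{fixed} classification $x\mapsto A_x$, whereas $c$ in the statement is arbitrary. The hypothesis that $c$-invariants are subsets of $V$ says nothing about $\dot A$.

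Taking $c=(x\mapsto A_x)$ proves only the special case that this one classification has non-ground invariants. That case is useless in applications. In the $=^{++}$ example, $\dot B$ consists of new sets of reals, so $\dot B\cap V=\emptyset$. The whole content there is that a hypothetical $c$ into sets of reals fails.

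In the composed case, your sentence ``then $\dot A$ is forced to be a subset of $V$'' is unjustified. You must instead run the homogeneity argument on the name $\dot C$ for $c$ applied to the virtual class of $\dot A$, which yields $\force\dot C=\check C$. Reaching a contradiction with the non-triviality of $\dot A$ then requires two facts. First, $c$ is a complete classification of virtual classes. Second, $E$-equivalent potential invariants are equal. Together these say precisely that $\sigma$ is unstable. So the detour is just Lemma~\ref{thm: general P ergodicity} applied to $\sigma$: it does not bypass the virtual class.
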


\section{Applications with an arbitrary ground model}

In this section we present two examples of how the main theorem may be applied to conclude irreducibility to $=^+$ (unclassifiability by countable sets of reals). In these examples, point (3) in Remark~\ref{remark:three generalizations} is not relevant. That is, Theorem~\ref{thm: anti classification subsets of V} is applied without any assumptions on the ground model $V$.


\subsection{A proof of ${=^{++}}\not\leq_B{=^+}$}
As warm-up, we present a proof of the irreducibility ${=^{++}}\not\leq_B{=^+}$ using Theorem~\ref{thm: general P ergodicity with class invs}.
\begin{remark}[Potential invariants for $=^{++}$]
    $=^{++}$ admits an absolute complete classification $x\mapsto A_x$ where the classifying invariants are all countable sets of countable sets of reals. It follows that any sets of set of reals $A$ is a potential invariant for $=^{++}$.
\end{remark}

Let $\P$ be the poset of all countable partial functions $p\colon \R\to 2$, ordered by extension. We identify a generic filter with a `generic function' $\R \to 2$, and in turn with a `generic subset' $G\subset \R$. Let $\dot{B}$ be the $\P$-name for the set $B=\set{H\subset\R}{H\Delta G\textrm{ is countable}}$, all subsets of $\R$ which have countable symmetric difference with $G$. Note that in the extension $V[G]$, $B$ is a potential invariant for $=^{++}$, as it is a set of sets of reals.
We will use the following basic facts:
\begin{itemize}
    \item Forcing with $\P$ does not add reals;
    \item The generic subset $G$ is not in the ground model.
\end{itemize}
\begin{claim}
    $\dot{B}$ is $\P$-ergodic.
\end{claim}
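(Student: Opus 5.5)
Given conditions $p,q\in\P$, we will build an automorphism $\gamma$ of $\P$ such that $\gamma\cdot p$ is compatible with $q$ and $\gamma$ preserves $\dot{B}$, i.e.\ $\dot B[\gamma\cdot G]=\dot B[G]$ for every generic $G$. The natural automorphisms are \emph{bit flips} on a countable set. For a countable set $D\subset\R$, define $\gamma_D$ on $\P$ by
\[
(\gamma_D\cdot p)(r)=1-p(r) \text{ for } r\in D\cap\dom p,\qquad (\gamma_D\cdot p)(r)=p(r) \text{ for } r\in\dom p\setminus D .
\]
So $\dom(\gamma_D\cdot p)=\dom p$. This map is an order-preserving involution of $\P$, hence an automorphism.

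First we check that $\gamma_D$ respects $\dot B$. If $G$ is generic, viewed as a subset of $\R$, then the generic subset associated with $\gamma_D\cdot G$ is $G\,\Delta\, D$. Since $D$ is countable, $(G\Delta D)\Delta G=D$ is countable. Therefore
\[
\set{H\subset\R}{H\Delta G \text{ countable}}=\set{H\subset\R}{H\Delta(G\Delta D) \text{ countable}},
\]
because countability of symmetric differences is transitive. One subtlety is where "countable" is evaluated. It is evaluated in $V[G]$. $D$ is countable in $V$, so it remains countable in $V[G]$, and since $\P$ is $\sigma$-closed it adds no new reals, so no issue arises. The computation is thus carried out inside $V[G]$ and gives $\dot B[\gamma_D\cdot G]=\dot B[G]$.

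Next we secure compatibility. Let $p,q\in\P$ and set
\[
D=\set{r\in\dom p\cap\dom q}{p(r)\neq q(r)} .
\]
$D$ is countable since $\dom p$ is. Then $\gamma_D\cdot p$ agrees with $q$ on $\dom p\cap\dom q$, so $(\gamma_D\cdot p)\cup q$ is a countable partial function extending both, which witnesses compatibility.

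This establishes that $\dot B$ is $\P$-ergodic. The only delicate point is the bookkeeping showing that the filter $\gamma_D\cdot G$ corresponds to the set $G\Delta D$. Once that identification is written out, the rest is routine. Choice is used in none of these steps: $D$ is defined outright, with no selections made.
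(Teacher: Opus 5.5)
Your proposal is correct and follows the paper's proof. The paper uses the same automorphism: flip values on the countable set $D=\set{r\in\dom p\cap\dom q}{p(r)\neq q(r)}$, which changes the generic set only by $D$ and so preserves $\dot B$, while making $\gamma_D\cdot p$ agree with $q$ on their common domain. Your appeal to $\sigma$-closure is harmless but unnecessary, since countability of $D$ is upward absolute to $V[G]$.
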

\begin{proof}
Given $p,q\in\P$, consider the automorphism of $\P$ acting on $r\in\P$ by flipping the value of $r(x)$ for any $x\in \dom p\cap\dom q$ for which $p(x)\neq q(x)$. Since only countably many values of $G$ are changed, the automorphism fixes $\dot{B}$.
\end{proof}
From Theorem~\ref{thm: general P ergodicity with class invs} we conclude:
\begin{cor}
    There is no absolute complete classification of $=^{++}$ using invariants which are sets of reals. In particular, ${=^{++}} \not\leq_B {=^+}$.
\end{cor}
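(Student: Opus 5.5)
The corollary follows from Theorem~\ref{thm: general P ergodicity with class invs}, part~(2) with $\alpha=1$, applied to the poset $\P$ of countable partial functions $\R\to 2$ and the name $\dot{B}$. What remains is to check its hypotheses.

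First, $\dot{B}$ is a name for a potential invariant for $=^{++}$. In $V[G]$ the set $B$ is a set of sets of reals. Collapsing its cardinality in a further extension makes $B$ a countable set of countable sets of reals. Since $\P$ adds no reals, $\R^{V[G]}=\R^V$, and in a collapse over $V[G]$ each member of $B$ becomes countable. So in such an extension there is $x$ with $A_x=B$, using the absolute classification from Example~\ref{example: abs class for FS jumps}.

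Second, we need that $\P$ adds no new reals. This holds because $\P$ is countably closed, which is one of the listed basic facts.

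Third, $\dot{B}$ is $\P$-ergodic, which is the Claim just proved. The flipping automorphism is determined by the countable set $\dom p\cap\dom q$. It changes $G$ on only countably many points, so it preserves $\dot{B}$, and $\gamma\cdot p$ agrees with $q$ on their common domain, so they are compatible.

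Fourth, $\dot{B}$ is non-trivial. I expect this step to need the most care, and the plan is to derive it from the fact that $G\notin V$. Suppose $\P$ forced $\dot{B}=\check{B}_0$. Take mutually generic $G,H$. Then $G\in B$ and $H\in B$, so $G\Delta H$ is countable. By genericity, however, $G$ and $H$ differ on uncountably many reals: any countable set of disagreements can be extended by a density argument, since a condition in $\P\times\P$ is countable and can be extended at fresh reals. This is a contradiction. (Alternatively, one can argue via the virtual-class reformulation: the corresponding virtual class is unstable.)

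With all hypotheses verified, Theorem~\ref{thm: general P ergodicity with class invs}(2) gives that there is no absolute complete classification of $=^{++}$ using sets of reals as invariants. Since a Borel reduction to $=^+$ would yield an absolute complete classification by countable sets of reals, it follows that ${=^{++}}\not\leq_B{=^+}$.
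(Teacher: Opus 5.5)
Your proposal is correct and follows the paper's route: the same poset of countable partial functions $\R\to 2$, the same name $\dot B$, the flipping automorphism for $\P$-ergodicity, and part~(2) of Theorem~\ref{thm: general P ergodicity with class invs} with $\alpha=1$. For non-triviality, the paper just uses the fact that $G\notin V$. Since $G\in\dot B[G]$, if $\dot B=\check B_0$ were forced then $G\in B_0\subseteq V$, a contradiction. This is shorter than your mutual-genericity argument, although that argument also works.
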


\subsection{Subgroups of $(\R,+)$}
We present an application of Theorem~\ref{thm: no classification P(N)} to prove an irreducibility to $=^+$. In this example the poset $\P$ is a bit more intricate.

Let $\mathcal{A} \subset \R^\N$ be all injective sequences $\seqq{x_n}{n\in\N}$ so that $x_0 = 0$ and $(\set{x_n}{n\in\N},+)$ is a subgroup of $(\R,+)$. $\mathcal{A}$ is a closed subset of $\R^\N$, equipped with the product topology.
\begin{defn}
    Define the equivalence relation $\sim_\mathcal{A}$ on $\mathcal{A}$ as follows. 
    \begin{center}
       $\seqq{x_n}{n\in\N} \sim_\mathcal{A} \seqq{y_n}{n\in\N} \iff$ \\ $\exists \lambda > 0$ so that the map $x\mapsto \lambda\cdot x$ is a bijection from $\set{x_n}{n\in\N}$ to $\set{y_n}{n\in\N}$.      
    \end{center}

\end{defn}
This equivalence relation was studied in~\cite{Anti-classification-CMMS}. The main motivation is that $\sim_{\mathcal{A}}$ is equivalent to the classification problem for countable Archimedean groups. (See Proposition~3.1 in \cite{Anti-classification-CMMS} and the discussion following it.) In particular, it is established in~\cite[Section~4]{Anti-classification-CMMS} that
\begin{equation*}
 {=^+} <_B {\sim_{\mathcal{A}}} <_B {=^{++}}
\end{equation*}
We provide here a proof of the lower bound.\footnote{The proof here is shorter and slightly simpler, though the proof in~\cite{Anti-classification-CMMS} gave a stronger lower bound, see~\cite[Theorem~1.2]{Anti-classification-CMMS}.}
\begin{prop}\label{prop: ArchGrps not red to =+}
    $\sim_\mathcal{A}$ is not Borel reducible to $=^+$.
\end{prop}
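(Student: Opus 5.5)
We apply Theorem~\ref{thm: general P ergodicity with class invs}(2) with $\alpha=1$. We need a poset $\P$ that adds no reals, together with a non-trivial, $\P$-ergodic name $\dot{A}$ for a potential invariant of $\sim_\mathcal{A}$.

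\emph{The invariant.} The natural absolute complete classification sends $\seqq{x_n}{n\in\N}$ to
\begin{equation*}
A_x=\set{\lambda\cdot H}{\lambda>0},\qquad H=\set{x_n}{n\in\N},
\end{equation*}
which is the set of all positive rescalings of the subgroup $H$. It is a set of countable sets of reals. A potential invariant is then a set of the form $\set{\lambda\cdot H}{\lambda>0}$ where $H$ is a countable subgroup of $\R$ in some extension. The plan is to add, without adding reals, an object that determines such an orbit under scaling but not the group $H$ itself.

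\emph{The poset.} Working in $V$, fix $\lambda_0$ and let $H_0=\Q$, or better, a countable subgroup of $\R$ such as $\Q\cdot 1+\Q\cdot\sqrt2$. The intended generic object is a ``generic countable subgroup of $\R$ up to scaling''. A cleaner option is to let $\P$ be the collapse of $\R$ to be countable using countable conditions, that is, the countable partial functions $p\colon\omega_1\to\R$ (or $\P=\mathrm{Coll}(\omega_1,\R)$). This adds no reals, because it is countably closed. The catch is that the resulting groups would live in $V$.

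A better choice follows the $=^{++}$ warm-up. Forcing with countable conditions cannot add a new countable set of reals, so $\dot{A}$ must be a set of countable groups that is not itself generic over the reals. Fix a subgroup $K\le\R$ in $V$ that is $\Q$-linearly independent-generated, with a basis $\set{b_i}{i\in I}$ of size continuum. Let $\P$ be the countable partial functions $\R_{>0}\to 2$, and let $G$ be the generic set of positive reals. The name $\dot{A}$ would be built so that $A$ is the collection of all $\lambda H$ with $\lambda\in G$, where $H$ is a fixed countable subgroup. Ergodicity would then be witnessed by flipping countably many values, as in the warm-up claim, provided the resulting set is invariant under countable modifications.

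To arrange that invariance, use the fact that the stabilizer $\set{\lambda}{\lambda H=H}$ is countable. Define $A$ to be the set of sets $\set{\lambda H}{\lambda\in S}$ with $S\,\Delta\, G$ countable. This is not yet an orbit under all $\lambda>0$, and this is the main obstacle: a potential invariant must be exactly the scaling orbit $\set{\lambda H}{\lambda>0}$ of a single countable group. So I would instead take $\dot{A}$ to be the name for that orbit, where $H=H_G$ is a countable subgroup generated from $G$ in a way that forces it to exist only virtually. For example, let $\P$ collapse $2^{\aleph_0}$ to $\aleph_1$ in a symmetric fashion. Then check that automorphisms rescaling by $\lambda$ preserve $\dot{A}$, which gives ergodicity, while $\dot{A}$ is not in $V$ because no single ground-model group produces it, which gives non-triviality.

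Verifying these two properties (ergodicity via automorphisms induced by multiplying by a positive real, and non-triviality) is the expected hard step. I would attempt it first with $\P$ being countable partial functions $\R\to2$ and the group generated, up to scaling, by the generic set, before concluding ${\sim_\mathcal{A}}\not\leq_B{=^+}$ from Theorem~\ref{thm: general P ergodicity with class invs}.
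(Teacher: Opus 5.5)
\textbf{There is a genuine gap.} You set up the right framework: a countably closed $\P$ with a non-trivial ergodic name, fed into Theorem~\ref{thm: general P ergodicity with class invs}(2). But you never fix a poset and a name. You also defer the only two things that need proof, ergodicity and non-triviality, to ``the expected hard step,'' so there is no argument yet.

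\textbf{Your concrete candidate fails.} Take $\P$ to be the countable partial functions $\R\to 2$ with generic set $G\subseteq\R$. For any real $r$ and condition $p$ you can pick $s$ with $s,r-s\notin\dom p$ and put both into $G$. So the group generated by $G$ is all of $\R$, which lies in $V$, and the invariant is trivial.

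Restricting $G$ to a $\Q$-linearly independent set $B$ avoids this, but breaks ergodicity in two ways. The warm-up automorphisms (flipping countably many values) do not preserve the scaling class of $\langle G\rangle$. And multiplication by a real does not act on such a poset, since in general $\lambda B\not\subseteq B$.

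You correctly note that a countably closed forcing adds no new countable sets of reals. The resolution is that the new group should be uncountable in the extension and become countable only after a further collapse. In other words, it is a virtual $\sim_\mathcal{A}$-class, and you can work with it directly via Theorem~\ref{thm: no classification P(N)}. Separately, your invariant $x\mapsto\set{\lambda H}{\lambda>0}$ is not absolute in the sense of Definition~\ref{defn: absolute classification}(3): an extension with new reals adds new rescalings, so potential invariants defined from it are not well behaved.

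\textbf{The missing idea.} You need a generating set that is both $\Q$-linearly independent and closed under the rescalings you want to use as automorphisms. The paper fixes a perfect algebraically independent set $Y$ and lets $D(Y)$ be the multiplicative group of monomials $a_1^{l_1}\cdots a_n^{l_n}$ with $a_i\in Y$. Distinct monomials are $\Q$-linearly independent, so the additive group generated by a set $M\subseteq D(Y)$ meets $D(Y)$ exactly in $M$.
\begin{itemize}
\item \emph{The poset.} Conditions are $p=(S_p,G_p)$ with $S_p\subseteq Y$ and $G_p\subseteq D(S_p)$ both countable. Set $q\le p$ iff $S_q\supseteq S_p$ and $G_q\cap D(S_p)=G_p$. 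This poset is countably closed, so it adds no reals. The name $\dot G$ is the additive group generated by the $G_p$ for $p$ in the generic filter.
\item \emph{Ergodicity.} For $a\in Y$, the map $\pi_a(p)=(S_p,a\cdot G_p)$ is an automorphism of the dense set $\set{p}{a\in S_p}$. It sends $\dot G$ to $a\cdot\dot G$, so it respects $\sim_\mathcal{A}$. Choosing $a\notin S_p\cup S_q$ gives $a\cdot D(S_p)\cap D(S_q)=\emptyset$, which is what lets a rescaled extension of $p$ be made compatible with $q$.
\item \emph{Instability.} By Remark~\ref{remark: observation hion lemma}, any witnessing scalar is a ratio $b/a$ of group elements. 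A density argument then adds a fresh $x\in Y$ as a generator on one side while keeping $x\cdot b/a$ out of the other.
\end{itemize}
None of this structure appears in your plan.
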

This lower bound was recently used by \cite{Gaoetal-extremely-amenable-2024}, where they study the complexity of classification for extremely amenable groups.

We introduce some terminology and observations from \cite[4.3]{Anti-classification-CMMS}.
We identify a member $\seqq{x_n}{n\in\N}\in \mathcal{A}$ with the subgroup $G=\set{x_n}{n\in\N}$.
\begin{remark}\label{remark: observation hion lemma}
Suppose $G_1, G_2$ are subgroup of $(\R,+)$ and $x\mapsto \lambda\cdot x$ is a bijection from $G_1$ to $G_2$. Then $\lambda = b / a$ for some $b\in G_2$ and $a\in G_1$. 
\end{remark}

\begin{remark}[Virtual $\sim_{\mathcal{A}}$-classes]

Any subgroup $G$ of $(\R,+)$ (not necessarily countable) is naturally identified as a virtual $\sim_\mathcal{A}$-class. Specifically, the corresponding virtual class is $(\Q,\sigma)$ where $\Q$ collapses $|G|$ to be countable and $\sigma$ is a name for an injective enumeration of $G$. For two subgroups $G_1, G_2$, thought of as virtual classes, they are $\sim_{\mathcal{A}}$-equivalent if and only if there is some $\lambda$ so that $x\mapsto\lambda x$ is a bijection from $G_1$ to $G_2$. Note that the definition (equivalence of virtual classes) only gives us this $\lambda$ in some forcing extension. Nevertheless, by the remark above, $\lambda = b/a$ for some $b\in G_2, a\in G_1$, and so $\lambda$ exists in any model in which $G_1,G_2$ exists.
    
\end{remark}

Now we define the forcing $\P$ to introduce a new virtual $\sim_\mathcal{A}$-class.
Given a set $S\subset \R$, define $D(S)$ to be the set of all numbers of the form $a_1^{l_1}\cdot,\dots,\cdot a_n^{l_n}$, where $l_1,\dots,l_n$ are integers and $a_1,\dots,a_n$ are in $S$. 
Let $Y$ be a perfect set of reals so that any $a_1,\dots,a_n \in Y$ are algebraic independent over $Y\setminus\{a_1,\dots,a_n\}$. In particular, for any two subsets $S_1,S_2\subset Y$, 
\begin{equation*}
    D(S_1) \cap D(S_2) = D(S_1 \cap S_2).
\end{equation*}

\begin{defn}
    Let $\P$ be the poset of conditions $p = (S_p,G_p)$ where $S_p$ is a countable subset of $Y$ and $G_p$ is a countable subset of $D(S_p)$. For $p,q\in\P$ say that $q$ extends $p$ if $S_q\supseteq S_q$ and $G_q \cap D(S_p) = G_p$.
\end{defn}
$\P$ is a partial order with maximal element $(\emptyset,\emptyset)$.
Given a generic filter $F\subset \P$, let $G$ be the additive subgroup of $\R$ generated by $\bigcup_{p\in F}G_p$. 

\begin{claim}
    $\dot{G}$ is an unstable $(\P,\sim_{\mathcal{A}})$-ergodic name for a virtual $\sim_{\mathcal{A}}$-class.
\end{claim}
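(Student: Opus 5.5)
The claim has three parts. First, $\dot{G}$ names a virtual $\sim_{\mathcal{A}}$-class. Second, it is unstable. Third, it is $(\P,\sim_{\mathcal{A}})$-ergodic. I will handle them in that order, using two structural facts about $\P$.

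\emph{Closure.} $\P$ is $\sigma$-closed: for a descending sequence $p_0\geq p_1\geq\cdots$, the pair $(\bigcup_n S_{p_n},\bigcup_n G_{p_n})$ is a lower bound. The identity $D(S_1)\cap D(S_2)=D(S_1\cap S_2)$ gives the coherence condition $G_q\cap D(S_p)=G_p$ at the limit. Hence $\P$ adds no reals, and in particular adds no new members of $\mathcal{P}(\N)$. This is the extra hypothesis needed to apply Theorem~\ref{thm: no classification P(N)} with $\alpha=1$.

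\emph{Independence.} Distinct monomials in algebraically independent reals are $\Q$-linearly independent. So membership of a real in the additive group generated by a set of monomials is decided by finitely many monomials.

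\emph{Virtual class.} In $V[F]$, $G$ is an additive subgroup of $\R$. By the remark on virtual $\sim_{\mathcal{A}}$-classes, any subgroup of $(\R,+)$ is a stable virtual class after collapsing it to be countable. So $\dot{G}$ names a virtual $\sim_{\mathcal{A}}$-class.

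\emph{Instability.} Let $F,H$ be mutually generic, and suppose toward a contradiction that $\lambda G_F=G_H$. By Remark~\ref{remark: observation hion lemma}, $\lambda=b/a$ with $b\in G_H$ and $a\in G_F$. Since $\P\times\P$ is also $\sigma$-closed, $\lambda\in V$. Fix $\lambda\in V$ and a condition $(p,q)$; I will find an extension forcing some $x\in G_F$ with $\lambda x\notin G_H$.
\begin{itemize}
\item Note that $\lambda$ is algebraic over only countably many elements of $Y$, so it depends on a countable set $T\subset Y$ in the relevant sense.
\item Pick a fresh $s\in Y\setminus(S_p\cup S_q\cup T)$, which exists because $Y$ is perfect.
\item Extend $p$ to $p'=(S_p\cup\{s\},\,G_p\cup\{s\})$.
\item Extend $q$ to $q'=(S_q\cup\{s\}\cup T,\,G_q)$.
\end{itemize}
In $q'$ we declare that no new elements of $D(S_{q'})$ enter $G_H$. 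Every later element of $G_H$ is then a monomial outside $D(S_{q'})$. By the independence fact, $\lambda s$ lies in the span of $G_H$ only if it lies in the span of $G_q$. This is impossible, since $s\notin S_q$ forces every element of that span to avoid $s$. A density argument then finishes.

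\emph{Ergodicity.} This is the step I expect to be the main obstacle. Conditions $p,q$ are compatible iff $G_p\cap D(S_q)=G_q\cap D(S_p)$. Given arbitrary $p,q$, I need an automorphism $\gamma$ of $\P$ with $\gamma p$ compatible with $q$ and $\gamma\dot{G}\sim_{\mathcal{A}}\dot{G}$.

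A permutation of $Y$ does not work directly. It induces a field automorphism, not a scaling, so the resulting group need not be a scalar multiple of $G$. My first attempt will combine two operations:
\begin{itemize}
\item multiplication by a monomial $\lambda\in D(Y)$, which is automatically $\sim_{\mathcal{A}}$-respecting;
\item an inversion or relabelling of exponents of a single generator $t\in S_p\cap S_q$, chosen so that it acts on $D(Y)$ as multiplication by a fixed monomial on the relevant pieces.
\end{itemize}
The goal is to move the part of $G_p$ lying in $D(S_p\cap S_q)$ to a disjoint set of monomials, which makes the compatibility condition hold. Equivalently, I would first extend $p$ and $q$ so that $S_p=S_q$, and then look for a scaling by $\lambda=u/v$ with $u,v$ monomials that respects the structure of conditions. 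Checking that such a map is an order-automorphism of $\P$, and that it preserves $D(S)$ up to enlarging $S$, is where the real work lies. If this fails, I would refine the poset description, for instance by working with the dense set of conditions closed under the relevant monomial scalings.
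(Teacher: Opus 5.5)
\textbf{There is a genuine gap: ergodicity is not proved.} Your $\sigma$-closure argument is fine. Your instability argument is also sound, and more careful than the paper's sketch: you use $\sigma$-closure of $\P\times\P$ to get $\lambda\in V$, and $\Q$-linear independence of monomials to pass from the generated group to the monomial set $G_q$. One small repair there: $T$ exists because $\lambda=b/a\in\Q(Y)$ by Remark~\ref{remark: observation hion lemma}, so $\lambda\in\Q(T)$ for a \emph{finite} $T\subseteq Y$. The phrase ``algebraic over countably many elements of $Y$'' is not the right justification.

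For ergodicity you only list candidate maps and defer the verification. Moreover, the candidates you lean toward cannot work. Consider scaling by a monomial in a common generator $t\in S_p\cap S_q$, or by any monomial supported on $S_p\cup S_q$. This already fails for $p=(\{s\},\emptyset)$ and $q=(\{s\},\{s\})$. Every extension $p'\leq p$ still has $G_{p'}\cap D(\{s\})=\emptyset$, and scaling by $s^k$ preserves this. But compatibility with $q$ requires this intersection to be $\{s\}$.

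\textbf{The missing idea is to scale by a \emph{fresh} generator, acting on a dense set.}
\begin{itemize}
\item \emph{The automorphism.} For $a\in Y$ let $\P_a=\set{p\in\P}{a\in S_p}$, which is dense open, and let $\pi_a(p)=(S_p,a\cdot G_p)$. Since $a\in S_p$ gives $a\cdot D(S_p)=D(S_p)$, we have $G_r\cap D(S_p)=G_p$ iff $aG_r\cap D(S_p)=aG_p$. So $\pi_a$ is an order-automorphism of $\P_a$, with inverse $\pi_{a^{-1}}$.
\item \emph{It respects $(\sim_{\mathcal A},\dot G)$.} It sends $\dot G$ to $a\cdot\dot G$, which is $\sim_{\mathcal A}$-equivalent to $\dot G$ via the scalar $|a|$, since $-G=G$. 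An automorphism of a dense open set is all that the proof of Lemma~\ref{thm: general P ergodicity} needs.
\item \emph{Choice of $a$ and a pre-extension of $p$.} Given $p,q$, choose $a\in Y\setminus(S_p\cup S_q)$; algebraic independence gives $a\cdot D(S_p)\cap D(S_q)=\emptyset$. Extend $p$ to $p'=(S_p\cup\{a\},\,G_p\cup a^{-1}(G_q\cap D(S_p)))$. This is a legitimate extension, because the added monomials have $a$-exponent $-1$ and so avoid $D(S_p)$.
\item \emph{Compatibility.} Now $G_{\pi_a(p')}=aG_p\cup(G_q\cap D(S_p))$. Its intersection with $D(S_q)$ is $G_q\cap D(S_p)$. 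This equals $G_q\cap D(S_p\cup\{a\})$, since $a\notin S_q$. So $\pi_a(p')$ is compatible with $q$ by your own (correct) compatibility criterion.
\end{itemize}

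This is the paper's argument. Note that the paper applies $\pi_a$ to $p$ directly and records only $a\cdot D(S_p)\cap D(S_q)=\emptyset$. That fact alone suffices only when $G_q\cap D(S_p)=\emptyset$. In general the pre-extension by $a^{-1}(G_q\cap D(S_p))$ is needed, as the pair $p=(\{s\},\emptyset)$, $q=(\{s\},\{s\})$ shows.
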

\begin{proof}
First we show $(\P,\sim_{\mathcal{A}})$-ergodicity.
Fix $a\in Y$. Let $\P_a = \set{p\in\P}{a\in S_p}$, a dense open subset of $\P$.
Define $\pi_a(p) = (S_p,a\cdot G_p)$. Then $\pi_a$ is an automorphism of $\P_a$ which respects $(\sim_{\mathcal{A}},\dot{G})$.

Given any $p,q\in\P$, fix some $a\in Y\setminus (S_p\cup S_q)$. By the algebraic independence of the members of $Y$, it follows that $a \cdot D(S_p) \cap D(S_q) = \emptyset$, and therefore $\pi_a(p)$ and $q$ are compatible.

Next, we show that $\dot{G}$ is unstable. Suppose $F_1,F_2$ are mutually $\P$-generics. We need to show that $\dot{G}[F_1]\not\sim_{\mathcal{A}}\dot{G}[F_2]$. By Remark~\ref{remark: observation hion lemma}, it suffices to show that for any $a\in \dot{G}[F_1]$ and $b\in \dot{G}[F_2]$, the map $x\mapsto b/a \cdot x$ is not a bijection from  $\dot{G}[F_1]$ and $\dot{G}[F_2]$.

It suffices to show the following. Given $p,q\in \P$, $a,b\in D(Y)$ such that $a\in G_p$ and $b\in G_q$, we need to find extensions $p', q'$ of $p,q$ respectively, some $a' \in G_{p'}$ so that $a',a,b\in S_{q'}$ yet $a'\cdot (b/a)\notin G_{q'}$. (In this case, $(p',q')$ forces that $x\mapsto b/a \cdot x$ is not a bijection from $\dot{G}[F_1]$ and $\dot{G}[F_2]$.)

Since $S_p,S_q$ are countable, there is some $x\in Y\setminus (S_p\cup S_q)$. By the algebraic independence of the members of $Y$ it follows that $x\notin D(S_p\cup S_q)$. Define
\begin{equation*}
    S_{p'} = S_{q'} = S_p\cup S_q \cup \{x\}, \quad G_{p'} = G_p \cup \{x\}, \quad G_{q'} = G_q.
\end{equation*}
Then $p', q'$ are as desired.
\end{proof}
Since $\P$ is countably closed, no reals are added when forcing with $\P$. It follows from Theorem~\ref{thm: no classification P(N)} that $\sim_{\mathcal{A}}$ is not Borel reducible to $=^+$, concluding the proof of Proposition~\ref{prop: ArchGrps not red to =+}.

\section{Applications over choiceless ground models}

In this section we present applications of Theorem~\ref{thm: anti classification subsets of V} where the poset $\P$ and name $\tau$ for a virtual $E$-class are constructed in a specific choiceless model. In these examples we naturally end up with the stronger anti-classification Theorem~\ref{thm: no classification P(On)}.

\subsection{Forcing over the basic Cohen model}\label{subsec: forcing over basic Cohen model}

Recall `the basic Cohen model' in which the axiom of choice fails. The model is of the form $V(A)$ where $A$ is a set of reals. See Appendix~\ref{Appendix: basic cohen model}.
One basic fact is that in $V(A)$, $A$ is a Dedekind-finite set, that is, there is no sequence of distinct members of $A$. In particular, there is no $y\in \R^\N$ enumerating $A$. It follows that $A$ is a non-trivial potential invariant for $=^+$, in $V(A)$.

\begin{defn}[The poset $\P$]
Working in $V(A)$, let $\P$ be the poset of all finite partial functions $p\colon A\to 2$, ordered by extension. 
\end{defn}
We identify a generic filter with a `generic function' $A \to 2$, and in turn with a `generic subset' $G\subset A$. 
The key point here is the following, due to Monro~\cite{Mon73}, which we prove in Appendix~\ref{Appendix: basic cohen model} below.
\begin{lemma}\label{lemma: no new reals adding one subset}
Forcing with $\P$ over $V(A)$ adds no subsets of $V$.
\end{lemma}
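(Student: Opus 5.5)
The plan is to run the standard analysis of the basic Cohen model $V(A)$, but now in the extension by the finite partial functions on $A$. First I will fix the presentation from the appendix. $V(A)$ is the symmetric submodel of $V[\langle a_n : n\in\N\rangle]$, where the $a_n$ are added by Cohen forcing $\mathbb{C}^{\N}$ (finite support) and $A=\set{a_n}{n\in\N}$. Every set in $V(A)$ is definable in $V(A)$ from $A$, finitely many $a_{n_1},\dots,a_{n_k}$, and a parameter from $V$. Equivalently, it has a name in $V$ whose symmetry group contains the pointwise stabilizer of a finite $e\subset\N$ inside the group $\mathrm{Sym}(\N)$ acting on coordinates.

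The second step is to put everything into a single symmetric system. Forcing with $\P$ over $V(A)$ amounts to forcing over $V$ with $\mathbb{C}^{\N}\times \mathrm{Fn}(\N,2)$, where the second coordinate is a finite partial function on indices, so $p(a_n)=s(n)$. We then pass to the symmetric submodel given by the same group $\mathrm{Sym}(\N)$, now acting on both coordinates simultaneously, with finite supports. I will check that $V(A)[G]$ is exactly this symmetric model. It contains $A$ and $G$. Conversely, every symmetric name is evaluated in $V(A)[G]$, since such a name is a $\P$-name over $V(A)$ built from symmetric $\mathbb{C}^\N$-names. Both inclusions are routine bookkeeping.

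The core step is this. Let $Z\subset V$ be in $V(A)[G]$, with support $e$ finite, so $Z$ is definable from $A$, $G$, $\vec a\rest e$ and a ground parameter. Suppose $(c,s)$ forces $\check v\in\dot Z$ and $(c',s')$ forces $\check v\notin \dot Z$. I will show these two conditions can be made compatible by an automorphism fixing $e$ pointwise, which gives a contradiction. As in Lemma~\ref{thm: general P ergodicity}, it then follows that $Z$ is determined by which ground-model $v$ are forced in by conditions restricted to $e$. Reducing further, the values of the $a_n$ and of $G$ on coordinates in $e$ can be decided by a single condition, since there are only countably many possibilities. So $Z$ is definable in $V[\vec a\rest e]$ from the finite part $(c\rest e,s\rest e)$, and $Z\in V[a_{n_1},\dots,a_{n_k}]$.

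The main obstacle is exactly this point: $Z$ lands in $V[\vec a\rest e]$, not in $V$. Adding a Cohen real does add subsets of $V$. So Monro's lemma must be read as "no new subsets of $V$ beyond those already in $V(A)$". That is what Theorem~\ref{thm: anti classification subsets of V} needs, with $V(A)$ as the ground model, and $V[\vec a\rest e]\subset V(A)$. The permutation argument must therefore show $Z\in V(A)$. Its delicate part is making $(c,s)$ and $(c',s')$ compatible when they disagree on coordinates outside $e$. I will handle this by a permutation $\pi$ fixing $e$ that moves the finitely many other coordinates of the first condition to fresh indices, which are disjoint from those of the second condition. The automorphism fixes the names of $A$, $G$ and $\vec a\rest e$, so it preserves $\dot Z$. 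Hence $\pi(c,s)$ still forces $\check v\in\dot Z$ and is compatible with $(c',s')$, which is the contradiction.
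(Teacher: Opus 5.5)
Your argument is correct in substance, but it takes a different route from the paper. The paper never leaves $V(A)$. It takes a $\P$-name $\tau\in V(A)$ for a subset of $V$, obtains a finite support $\bar a\subseteq A$ from the definability fact, and uses the Continuity Lemma to show that $p\force\check v\in\tau$ implies $p\rest\bar a\force\check v\in\tau$. The coordinates of $p$ off $\bar a$ are replaced by nearby points of $A$ that avoid the domain of an arbitrary $r\le p\rest\bar a$. So $\tau[G]$ is defined in $V(A)$ from $\tau$ and the finite condition $G\rest\bar a$.

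You instead place $V(A)[G]$ inside the symmetric extension $N$ of $V$ by $\C^\N\times\mathrm{Fn}(\N,2)$, with $\mathrm{Sym}(\N)$ acting diagonally, and run the textbook support argument there. Exact automorphisms in $\mathrm{fix}(e)$ that move the off-$e$ coordinates to fresh indices do the work of the Continuity Lemma, and you get $Z\in V[H\rest e]=V[\vec a\rest e]\subseteq V(A)$. Your route uses only standard symmetric-model technology and reduces the key step to an automorphism computation. The paper's route avoids treating the two-step extension as a symmetric extension of $V$, and it yields the sharper internal fact that $\tau$ is decided by $G\rest\bar a$.

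Several points should be tightened.
\begin{itemize}
\item Only the inclusion $V(A)[G]\subseteq N$ is used. It follows from minimality of $V(A)$, since $N$ is a transitive model of ZF containing $V$, $A$ and $G$. You can drop the reverse inclusion; your sketch of it is not immediate, because converting $\mathrm{Fn}(\N,2)$-conditions into $\P$-conditions uses $\vec a$, which is not in $V(A)$.
\item A $\P$-generic over $V(A)$ need not be induced via $\vec a$ by an $\mathrm{Fn}(\N,2)$-generic over $V[\vec a]$. Using the Continuity Lemma one can build a $V(A)$-generic $G$ disjoint from $\set{a_{2n}}{n\in\N}$. So phrase the conclusion through the forcing relation: given $p'\in\P$, take such an $s$ through the translation of $p'$, get $\tau[G_s]=Z\in V(A)$, and use the forcing theorem over $V(A)$ to find $q\le p'$ forcing $\tau=\check Z$.
\item Your compatibility claim needs the hypothesis that the two conditions are compatible on $e$.
\item The values $a_n$ for $n\in e$ are reals and are not decided by any single condition. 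The correct statement is that $Z=\set{v}{\exists (c,s)\,[(c,s)\rest e\in H\rest e\text{ and }(c,s)\force\check v\in\dot Z]}$ is definable in $V[H\rest e]$. Moreover $V[H\rest e]=V[\vec a\rest e]$, because $s\rest e$ is a finite object of $V$.
\end{itemize}
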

In particular, no sets of ordinals are added. It follows that any set of sets of ordinals, in a $\P$-generic extension of $V(A)$, is a subset of $V(A)$.

\subsubsection{Another proof of ${=^{++}\not\leq_B{=^+}}$}

Let $\dot{B}$ be a $\P$-name for the set 
\begin{equation*}
B = \set{H\subset A}{H\triangle G \textrm{ is finite}},    
\end{equation*}
all subsets of $A$ which have finite symmetric difference with $G$. Note that, in the extension $V(A)[G]$, $B$ is a potential invariant for $=^{++}$, as it is a set of sets of reals.

\begin{claim}[In $V(A)$]\label{claim: ergodic inv between + and ++}
    $\dot{B}$ is $\P$-ergodic.
\end{claim}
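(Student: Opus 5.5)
We mimic the proof of the analogous claim for countable partial functions on $\R$, replacing "countable" by "finite". Work in $V(A)$ and fix conditions $p,q\in\P$; these are finite partial functions $A\to 2$. Let
\begin{equation*}
D=\set{a\in\dom p\cap\dom q}{p(a)\neq q(a)},
\end{equation*}
a finite subset of $A$. We define $\gamma\colon\P\to\P$ by flipping values on $D$: for $r\in\P$, $\gamma\cdot r$ has the same domain as $r$, with $(\gamma\cdot r)(a)=1-r(a)$ for $a\in D\cap\dom r$ and $(\gamma\cdot r)(a)=r(a)$ otherwise.

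First we verify that $\gamma$ is an automorphism of $\P$. It is defined in $V(A)$ from the finite set $D$, so no choice is needed. It preserves extension, and it is an involution.

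Next we check that $\gamma\cdot p$ is compatible with $q$. On $\dom p\cap\dom q$ the values of $\gamma\cdot p$ now agree with those of $q$: points of $D$ were flipped, and the remaining common points already agreed. Hence $(\gamma\cdot p)\cup q$ is a finite partial function, so it is a common extension.

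Finally we show that $\gamma$ preserves $\dot{B}$. For a generic $G$, the filter $\gamma\cdot G$ corresponds to the generic subset $G\triangle D$. Since $D$ is finite, $H\triangle G$ is finite if and only if $H\triangle(G\triangle D)$ is finite. Therefore $\dot{B}[G]=\dot{B}[\gamma\cdot G]$.

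There is no real obstacle in this argument. The only point needing care is that $\gamma\cdot G$ is the flipped set $G\triangle D$, which follows from the standard fact that applying an automorphism to the generic filter corresponds to applying it to the generic object.
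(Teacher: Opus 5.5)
Your proof is correct and is essentially the paper's argument. The paper uses the automorphism $\pi_F$ that flips values on a finite set $F\subset A$ (your $D$); it notes that $\pi_F$ preserves $\dot{B}$ because it changes only finitely many values of the generic, and it chooses $F$ so that $\pi_F(p)$ is compatible with $q$. Your explicit choice of $D$ as the disagreement set, and your identification of $\gamma\cdot G$ with $G\triangle D$, just fill in details the paper leaves implicit.
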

\begin{proof}
    Given a finite subset $F\subset A$, there is an automorphism $\pi_F$ of $\P$, flipping the values on $F$. That is, for $a\in F$, $p\in \P$, and $a\in \dom p$, $\pi_F(p)(a) = 1 - p(a)$. Note that $\pi_F$ preserves $\dot{B}$, as it changes only finitely many values of $\dot{G}$. Finally, for any two conditions $p,q\in\P$ there is a finite set $F$ so that $\pi_F(p)$ is compatible with $q$.
\end{proof}
We conclude from Theorem~\ref{thm: general P ergodicity with class invs} that, in $V(A)$, there is no absolute complete classification of $=^{++}$ using invariants which are subsets of $V(A)$ in any $\P$-generic extension.

\begin{cor}\label{cor: =++ unclass sets of sets ord}
    There is no absolute complete classification of $=^{++}$ using invariants which are sets of sets of ordinals. In particular, ${=^{++}} \not\leq_B {=^+}$.
\end{cor}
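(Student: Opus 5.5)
The plan is to work inside the basic Cohen model $V(A)$ and derive the corollary from Theorem~\ref{thm: general P ergodicity with class invs}(1) together with Lemma~\ref{lemma: no new reals adding one subset}. Since the statement we want is absolute, we may assume it fails in $V$ and aim for a contradiction. So suppose, working in $V$, that $x\mapsto c(x)$ is an absolute complete classification of $=^{++}$ whose invariants are sets of sets of ordinals. By clause (2) of Definition~\ref{defn: absolute classification}, the same formula $\psi$ still defines a complete classification in every ZF extension of $V$. This applies in particular to $V(A)$, and to every $\P$-generic extension $V(A)[G]$, and the invariants there are still sets of sets of ordinals. Clause (3), about upward persistence of $\psi$, also passes from $V$ to $V(A)$, because any chain of extensions $V(A)\subset N\subset W$ is in particular a chain of extensions of $V$. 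Hence $c$ remains an absolute complete classification when we regard $V(A)$ as the ground model.

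Next, in $V(A)[G]$ every invariant is a set of sets of ordinals. By Lemma~\ref{lemma: no new reals adding one subset}, forcing with $\P$ over $V(A)$ adds no subsets of $V$, and in particular no new sets of ordinals. So every set of ordinals in $V(A)[G]$ already lies in $V(A)$, and therefore every invariant is a subset of $V(A)$. This is exactly the hypothesis of Theorem~\ref{thm: general P ergodicity with class invs}(1), with $V(A)$ in the role of $V$.

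It remains to supply a non-trivial, $\P$-ergodic name for a potential invariant. We take $\dot B$. Ergodicity is Claim~\ref{claim: ergodic inv between + and ++}.

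For the potential-invariant property, note that in $V(A)[G]$ the set $B$ is a set of sets of reals, and it becomes countable, with countable members, after a further collapse. Hence it equals $c(x)$ for some $x$ in an extension; to be precise, the invariant attached to $B$ is the $c$-value of the associated virtual class. I expect the main care to be needed here, in translating between "$B$ is a set of sets of reals" and "the potential invariant $c(x)$". It can be handled by using the virtual-class formulation of Theorem~\ref{thm: anti classification subsets of V} directly: let $\tau$ be a name for an enumeration of $B$ in a collapse, i.e.\ a name for an element $x$ of $(\R^\N)^\N$ coding $B$. The automorphisms $\pi_F$ fix $B$, so they respect $(=^{++},\tau)$, which gives ergodicity.

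For non-triviality (instability), we need $B[G]\neq B[H]$ for mutually generic $G,H$. This holds because $G\triangle H$ is infinite by a density argument. If $\pi_F$-invariance and the point that "$c(\tau)$ is a subset of $V(A)$" are checked in the iterated extension $V(A)[G][\text{collapse}]$, then we additionally need that the collapse adds no new sets of ordinals there. To avoid this issue, I would instead apply the Lemma~\ref{thm: general P ergodicity} argument to $c(B)$, computed as in Definition~\ref{defn: extending absolute map to virt class}. This value is defined in $V(A)[G]$ itself, is a set of sets of ordinals there, and hence is a subset of $V(A)$.

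Applying the theorem then shows that $c$ cannot be a complete classification in $V(A)$, which is a contradiction. For the "in particular": a Borel reduction $f$ of $=^{++}$ to $=^+$ yields the absolute classification $x\mapsto\{f(x)(n):n\in\N\}$, with values in countable sets of reals. Each real can be coded as a subset of $\omega$, so this is a set of sets of ordinals, and the first part rules it out.
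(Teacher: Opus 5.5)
Your proposal is correct and follows the paper's argument. You work in the basic Cohen model $V(A)$ with the poset of finite partial functions $A\to 2$, use Lemma~\ref{lemma: no new reals adding one subset} to see that invariants in $V(A)[G]$ are subsets of $V(A)$, and apply Theorem~\ref{thm: general P ergodicity with class invs} to the ergodic name $\dot B$ from Claim~\ref{claim: ergodic inv between + and ++}. The extra points you handle are details the paper leaves implicit: relativizing the absolute classification to $V(A)$, proving non-triviality from $G\triangle H$ being infinite for mutual generics, and evaluating $c$ on the virtual class inside $V(A)[G]$ rather than in the collapse extension.
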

 
\subsubsection{An equivalence relation between $=^+$ and $=^{++}$}

We present a more subtle irreducibility result, using the same model $V(A)$ and poset $\P$.
\begin{defn}
Consider the space $X = \R^\N \times 2^\N$. For $(x,t)\in X$, define $A_x = \set{x(n)}{n\in\N}$ and $G_{(x,t)}=\set{x(n)}{t(n)=1}$. Define
\begin{equation*}
    (x,t) \mathrel{E} (y,s) \iff A_{x} = A_{y} \textrm{ and } G_{(x,t)} \Delta G_{(y,s)} \textrm{ is finite}.
\end{equation*}
\end{defn}
It is readily seen that ${=^+}\leq_B{E}\leq_B{=^{++}}$. $E$ is an example of an equivalence relation with \emph{potential complexity} $D(\mathbf{\Pi}^0_3)$.\footnote{See \cite{HKL98} for a treatment of potential complexity. The equivalence relation $E$ here is Borel reducible to the equivalence relation $\cong^\ast_{3,1}$ there. See also \cite{Shani-reducibility-symmetric-models-21, Shani-FS-homomorphisms}.} It follows from the results in~\cite{HKL98} that ${E} <_B {=^{++}}$. We show here that the reduction $=^+ \leq_B E$ is strict as well.

\begin{prop}\label{claim: finite diff subset of A anticlassification}
   There is no absolute complete classification of $E$ using invariants which are sets of sets of ordinals. In particular, ${E} \not\leq_B {=^+}$. 
\end{prop}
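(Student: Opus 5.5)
We will apply Theorem~\ref{thm: general P ergodicity with class invs}(1) in the model $V(A)$, with the poset $\P$ of finite partial functions $A\to 2$ and its generic subset $G\subset A$. The potential invariant is the name $\dot{C}$ for the pair
\begin{equation*}
C=(A,\ B),\qquad B=\set{H\subset A}{H\triangle G\textrm{ is finite}}.
\end{equation*}
The classification of $E$ we use is the absolute map $(x,t)\mapsto (A_x,[G_{(x,t)}]_{\mathrm{fin}})$, where $[\cdot]_{\mathrm{fin}}$ denotes the class modulo finite differences within $\mathcal{P}(A_x)$. This is an absolute complete classification directly from the definition of $E$.

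First we check that $C$ is a potential invariant for $E$ in $V(A)[G]$. Collapse $A$ to be countable, fix an enumeration $x$ of $A$, and let $t(n)=1$ iff $x(n)\in G$. Then $A_x=A$ and $G_{(x,t)}=G$, so the invariant of $(x,t)$ is exactly $C$.

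Next, $\P$-ergodicity. The flip automorphisms $\pi_F$, for finite $F\subset A$, fix $\check A$ and change $G$ only on $F$. Hence they preserve $\dot B$ and therefore $\dot C$, and the argument of Claim~\ref{claim: ergodic inv between + and ++} applies verbatim.

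Non-triviality: we need $B\notin V(A)$ to be forced. Any condition $p$ has finite domain, so $G$ meets $A\setminus\dom p$ in a generic way and cannot agree modulo finite with a ground-model subset of $A$. Concretely, for any $H\in V(A)$ with $H\subset A$ and any $p$, we extend $p$ at a point of $A\setminus\dom p$ to disagree with $H$. By density, $G\triangle H$ is infinite, so $G\notin B_H$ and hence $B\neq$ any ground-model candidate. Therefore $\dot C\neq\check D$ for every $D\in V(A)$.

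Theorem~\ref{thm: general P ergodicity with class invs}(1) then says that in $V(A)$ there is no absolute complete classification of $E$ whose invariants are subsets of $V(A)$ in every $\P$-extension. By Lemma~\ref{lemma: no new reals adding one subset}, $\P$ adds no sets of ordinals, so every set of sets of ordinals in $V(A)[G]$ is a subset of $V(A)$. Hence there is no absolute classification by sets of sets of ordinals in $V(A)$. The nonexistence of such a classification should be absolute, so it holds in $V$ as well. In particular, a Borel reduction to $=^+$ would yield an absolute classification by sets of reals, i.e.\ sets of sets of ordinals, so ${E}\not\leq_B{=^+}$.

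The main obstacle is the transfer between $V$ and $V(A)$. A Borel reduction, or an absolute classification defined in $V$ with parameters, remains an absolute classification when interpreted in $V(A)$, since $V\subset V(A)$ and clause~(3) of Definition~\ref{defn: absolute classification} passes to extensions. This transfer is exactly how Corollary~\ref{cor: =++ unclass sets of sets ord} was deduced, and we would argue it identically, checking that the parameters live in $V$.
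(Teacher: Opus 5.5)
Your argument is correct and is essentially the paper's proof. It uses the same model $V(A)$, the same poset of finite partial functions $A\to 2$, the same flip automorphisms $\pi_F$, a density argument for non-triviality, and Lemma~\ref{lemma: no new reals adding one subset} to make sets of sets of ordinals into subsets of $V(A)$. The difference is only one of formulation, which the paper itself notes is equivalent. You work with the potential invariant $(A,B)$ and Theorem~\ref{thm: general P ergodicity with class invs}(1). The paper instead uses the virtual class $(A,\dot{G})$, shows instability because $G\triangle H$ is infinite for mutually generic $G,H$, and applies Theorem~\ref{thm: no classification P(On)}.
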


\begin{remark}[Virtual $E$-classes]
    Any pair $(A,G)$ where $A\subset \R$ and $G\subset A$ may be viewed as a virtual $E$-class. Two virtual $E$-classes $(A_1,G_1)$, $(A_2,G_2)$ are $E$-equivalent if and only if $A_1 = A_2$ and $G_1\Delta G_2$ is finite.
\end{remark}

\begin{claim}
    Working in $V(A)$,    $(A,\dot{G})$ is an unstable $(\P,E)$-ergodic name for a virtual $E$-class.
\end{claim}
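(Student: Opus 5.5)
We must verify three things for the name $\tau=(A,\dot G)$, where $\dot G$ is the generic subset of $A$ added by $\P$: that it names a stable virtual $E$-class, that it is $(\P,E)$-ergodic, and that it is unstable.

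\emph{Stable virtual class.} In $V(A)[G]$ the pair $(A,G)$ is a set of reals with a distinguished subset. By the Remark on virtual $E$-classes, such a pair is a virtual $E$-class: collapse $A$ to be countable, enumerate it as $x$, and let $t$ mark the members of $G$. The $E$-class of the resulting $(x,t)$ does not depend on the enumeration, since $A_x=A$ and $G_{(x,t)}=G$ for every choice. So the name is stable.

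\emph{Ergodicity.} We reuse the automorphisms $\pi_F$ from the proof of Claim~\ref{claim: ergodic inv between + and ++}. For a finite $F\subset A$, $\pi_F$ flips the values of a condition on $F$, so $\dot G[\pi_F\cdot G]=G\triangle F$ while $A$ is unchanged. Hence $(A,G)$ and $(A,G\triangle F)$ are $E$-equivalent as virtual classes, and $\pi_F$ respects $(E,\tau)$. Given $p,q\in\P$, take $F=\{a\in\dom p\cap\dom q: p(a)\neq q(a)\}$; then $\pi_F(p)$ is compatible with $q$. This proves $(\P,E)$-ergodicity.

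\emph{Instability.} Let $G,H$ be mutually $\P$-generic over $V(A)$. Both virtual classes have first coordinate $A$, so we must show that $G\triangle H$ is infinite. For any $(p,q)\in\P\times\P$ and any finite set $F\subset A$, we can choose $a\in A\setminus(\dom p\cup\dom q\cup F)$ and extend by setting $p(a)=1$ and $q(a)=0$. This works because $A$ is infinite, even though it is Dedekind-finite. By density, $G\triangle H$ meets the complement of every finite set, so it is infinite. Note that we cannot list the points of $G\triangle H$ in $V(A)$; we only need it to be infinite, and the density argument gives this directly.

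The main subtlety is not in the claim itself but in the step afterwards: Lemma~\ref{lemma: no new reals adding one subset} ensures that sets of sets of ordinals in the extension are subsets of $V(A)$, so Theorem~\ref{thm: anti classification subsets of V} applies. Within the claim, the only point needing care is the instability step, where we must avoid choice over $A$; the density argument handles this.
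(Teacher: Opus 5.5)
Your proposal is correct and is essentially the paper's proof. Ergodicity comes from the flip automorphisms $\pi_F$ of Claim~\ref{claim: ergodic inv between + and ++}, and instability comes from $G\triangle H$ being infinite for mutually generic $G,H$. You also make explicit the density argument that the paper leaves implicit; its one unstated step is that a finite subset of $A$ in the extension already lies in $V(A)$, so meeting $D_F$ for every finite $F\in V(A)$ indeed forces $G\triangle H$ to be infinite.
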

\begin{proof}
    The proof of Claim~\ref{claim: ergodic inv between + and ++} shows that $(A,\dot{G})$ is $(\P,E)$-ergodic, since each $\pi_F$ respects $(E,(A,\dot{G}))$. For mutually $\P$-generics $G,H$, $G\Delta H$ is infinite, and therefore $(A,G), (A,H)$ are not equivalent virtual $E$-classes. That is, $(A,\dot{G})$ is unstable.
\end{proof}

Since $\P$ adds no sets of ordinals over $V(A)$, Proposition~\ref{claim: finite diff subset of A anticlassification} now follows from Theorem~\ref{thm: no classification P(On)}.

\subsection{Forcing over the second Cohen model}\label{subsec: second cohen model}

Recall the `second Cohen model' in which choice fails for a sequence of pairs. See Appendix~\ref{Appendix: second cohen model}. The model is of the form $V(\seqq{Z_n}{n\in\N})$ where each $Z_n$ is a set two $E_0$-classes. That is, $Z_n = \{[a]_{E_0},[b]_{E_0}\}$ for some $a,b\in 2^\N$. In this model $\prod_n Z_n = \emptyset$. We present an application of our main theorem where $V(\seqq{Z_n}{n\in\N})$ is used as the base model.

\begin{defn}\label{defn: E pairs finite swaps}
Let $X=((2^\N)^2)^\N$. For $x\in X$ write $x=\seqq{x(n)}{n\in\N}$, where $x(n)=(x(n)(0),x(n)(1))\in(2^\N)^2$. Define the relation $E$ on $X$ by $x\mathrel{E}y$ if and only if 
\begin{equation*}
    \{[x(n)(0)]_{E_0}, [x(n)(1)]_{E_0}\} =     \{[y(n)(0)]_{E_0}, [y(n)(1)]_{E_0}\}\textrm{, for all }n\textrm{ and}
\end{equation*}
\begin{equation*}
    x(n)(0) \mathrel{E_0} y(n)(0)\textrm{, for all but finitely many }n.
\end{equation*}
\end{defn}

In other words, $x\mathrel{E}y$ if $x$ and $y$ are pointwise $E_0$-equivalent, up to finitely many swaps of the two coordinates. Note that $E$ is a Borel equivalence relation.

\begin{prop}\label{prop: unclass E over second cohen model}
       There is no absolute complete classification of $E$ using invariants which are sets of sets of ordinals. In particular, ${E} \not\leq_B {=^+}$. 
\end{prop}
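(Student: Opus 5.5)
We apply Theorem~\ref{thm: no classification P(On)}, working over the second Cohen model $V(\seqq{Z_n}{n\in\N})$, with the same template as in Section~\ref{subsec: forcing over basic Cohen model}. The plan has four steps: define a forcing that picks one element from each pair; identify a name for a virtual $E$-class from the generic choice; verify ergodicity and instability; and check that no sets of ordinals are added.

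\textbf{The forcing.} In $V(\seqq{Z_n}{n\in\N})$, let $\P$ be the poset of finite partial choice functions $p$ with $\dom p\subset\N$ finite and $p(n)\in Z_n$, ordered by extension. A generic $G$ yields a total choice function $g\in\prod_n Z_n$, which is new since $\prod_n Z_n=\emptyset$ in the ground model.

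\textbf{The virtual class.} A sequence of pairs $\seqq{Z_n}{n\in\N}$ together with a choice function $g$ is naturally a virtual $E$-class. After collapsing, choose representatives $x(n)(0)\in g(n)$ and $x(n)(1)\in Z_n\setminus\{g(n)\}$. Any two such choices of representatives are $E$-equivalent. Two pairs $(\seqq{Z_n}{}, g)$ and $(\seqq{Z_n}{}, h)$ are $E$-equivalent exactly when $g(n)=h(n)$ for all but finitely many $n$. Let $\tau$ be the resulting $\P$-name.

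\textbf{Ergodicity and instability.} For each finite $F\subset\N$, let $\pi_F$ swap the value $p(n)$ with the other member of $Z_n$ for $n\in F\cap\dom p$. This is an automorphism of $\P$. It changes $g$ only at finitely many coordinates, so it respects $(E,\tau)$. Given $p,q\in\P$, take $F=\set{n\in\dom p\cap\dom q}{p(n)\neq q(n)}$; then $\pi_F(p)$ is compatible with $q$, which gives $(\P,E)$-ergodicity. For instability, if $G,H$ are mutually generic, a density argument shows that the induced choice functions disagree at infinitely many $n$. Hence $\tau[G]\not\mathrel{E}\tau[H]$.

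\textbf{No new sets of ordinals (main obstacle).} The hard step is to show that forcing with $\P$ over $V(\seqq{Z_n}{n\in\N})$ adds no sets of ordinals, or more generally no subsets of $V$. This is the analogue of Monro's Lemma~\ref{lemma: no new reals adding one subset}. I would first use the symmetric presentation of the second Cohen model from Appendix~\ref{Appendix: second cohen model}, as a symmetric submodel of a Cohen extension $V[\seqq{a_n,b_n}{n\in\N}]$ with $Z_n=\{[a_n]_{E_0},[b_n]_{E_0}\}$. A $\P$-generic over the symmetric model is then essentially a choice of $g(n)$ for each $n$. Forcing with $\P$ should be absorbed into the symmetric system by enlarging the permitted supports. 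Concretely, the extension $V(\seqq{Z_n}{n\in\N})[G]$ should be contained in a symmetric model of the same Cohen extension in which the ordering of every pair is fixed, up to finitely many swaps. By the usual support argument in such symmetric models, every set of ordinals lies in $V[\text{finite part of the generic}]$, and the same support argument shows it already lies in the original symmetric model. Alternatively, I would argue directly. Suppose $p\force\dot S\subset\check\alpha$, where $\dot S$ has a symmetric name with finite support. Then a homogeneity argument combining automorphisms of the Cohen forcing (flipping finitely many bits of $a_n,b_n$ and swapping $a_n\leftrightarrow b_n$) with the $\pi_F$ shows that the truth value of $\check\beta\in\dot S$ is decided by a condition in the ground model's part. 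Once this lemma is in place, Theorem~\ref{thm: no classification P(On)} with $\alpha=2$, that is, invariants that are sets of sets of ordinals, gives the proposition, and in particular ${E}\not\leq_B{=^+}$.
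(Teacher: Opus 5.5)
This is essentially the paper's proof. You use the same poset, name $\tau$, automorphisms $\pi_F$, density argument and key input Lemma~\ref{lemma: no new sets second cohen}, and the paper proves that lemma by your ``direct'' alternative: by the indiscernibility Lemma~\ref{lem: indiscernibility second Cohen} (swap the two members of $Z_n$, then flip finitely many bits), if $p\force\check v\in\dot S$ then $p\rest a\force\check v\in\dot S$, where $a$ is the finite set of coordinates used to define $\dot S$ in $V(\seqq{Z_n}{n\in\N})$. One correction: you need the instance $\alpha=1$ of Theorem~\ref{thm: no classification P(On)} (no new sets of ordinals; invariants are subsets of $\mathcal{P}(\mathrm{On})$), not $\alpha=2$, whose hypothesis fails here since $\bigcup_n g(n)$ is a new set of reals.
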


\begin{remark}[Virtual $E$-classes]\label{remark: virtual classes finite swaps}
Any sequence $\seqq{(A_n,B_n)}{n\in\N}$ of pairs of $E_0$-classes can be viewed as a virtual $E$-class. Two such virtual $E$-classes $\seqq{(A_n,B_n)}{n\in\N}$, $\seqq{(C_n,D_n)}{n\in\N}$ are $E$-equivalent if they are equal up to finitely many swaps.
\end{remark}

\begin{defn}[The poset $\P$]
Working in $V(\seqq{Z_n}{n\in\N})$, let $\P$ be the poset of finite partial choice functions in $\prod_n Z_n$, ordered by extension.    
\end{defn}

The key point here is the following, which we prove in Appendix~\ref{Appendix: second cohen model} below.
\begin{lemma}\label{lemma: no new sets second cohen}
Forcing with $\P$ over $V(\seqq{Z_n}{n\in\N})$ adds no subsets of $V$. In particular, no sets of ordinals are added.
\end{lemma}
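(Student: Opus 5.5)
The plan is to run the standard symmetric-model argument used for the analogous Lemma~\ref{lemma: no new reals adding one subset}, now over $M=V(\seqq{Z_n}{n\in\N})$. First we recall the presentation of $M$ from Appendix~\ref{Appendix: second cohen model}. It is a symmetric submodel of a Cohen extension $V[\langle a_n,b_n\rangle_n]$, where $Z_n=\{[a_n]_{E_0},[b_n]_{E_0}\}$. Its symmetry group is generated by finite modifications of the generic reals together with swaps of $a_n$ and $b_n$, and its supports are finite. Every set in $M$ is definable in $M$ from some finite tuple $\bar z$: a finite partial sequence of the $E_0$-classes themselves, the sequence $\seqq{Z_n}{n\in\N}$, and parameters in $V$.

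Now fix a $\P$-generic $g$ over $M$, and let $Y\in M[g]$ be a subset of $V$. We take a $\P$-name $\dot Y\in M$ and a condition $p_0$ forcing $\dot Y\subset\check V$. The name $\dot Y$ is definable from a finite support, say involving coordinates $n<k$, together with the choices there. The key step is then a homogeneity argument. For any $v\in V$ and any $p\le p_0$, we show that $p\force \check v\in\dot Y$ depends only on $p\restriction k$. Given two conditions $p,q$ agreeing below $k$ with $p\force \check v\in\dot Y$ and $q\force\check v\notin\dot Y$, we want a contradiction. Working in the full Cohen extension, we apply a permutation that swaps $a_n\leftrightarrow b_n$ at the finitely many $n\ge k$ where $p$ and $q$ disagree. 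This permutation moves $p$ to a condition compatible with $q$. It fixes the support of $\dot Y$, $v$, and $\seqq{Z_n}{n\in\N}$, so it fixes $\dot Y$. This yields the contradiction.

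Granting this, $Y$ is determined by $g\restriction k$, which is a finite choice function. We then argue that $Y$ is definable in $M$ from $\dot Y$ and $g\restriction k\in M$. Explicitly, $Y=\set{v}{\exists p\le p_0,\ p\restriction k=g\restriction k,\ p\force \check v\in\dot Y}$ (where we first take $p_0$ below the relevant part of $g$), and this set lies in $M$. Hence $M[g]$ adds no new subset of $V$, and in particular no new sets of ordinals.

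The main obstacle is the homogeneity step. Automorphisms of $M$ generally do not act on $\P$ in a way that fixes the name; it is the swap automorphisms from the symmetric system that make this work. So I would carefully lift the swap at coordinates $n\ge k$ to the ambient Cohen model. There I would check two things: it preserves the symmetric name $\dot Y$ (by finite support), and it acts on $\P$ by exchanging the two elements of $Z_n$. I would also need to handle the finitely many coordinates of $\dot Y$'s support that the conditions mention. This is why we restrict to conditions that agree below $k$.
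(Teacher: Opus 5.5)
Your outline matches the paper's: fix a finite support below $k$, show that $p\force\check v\in\dot Y$ depends only on $p\restriction k$, then read $Y$ off from $g\restriction k\in M$. The homogeneity step, however, does not go through as you describe it. The facts $p\force\check v\in\dot Y$ and $q\force\check v\notin\dot Y$ hold in $M$, so both are forced over $V$ by a single Cohen condition $r\in\C$ in the generic. The symmetry lemma for the swap $\pi$ at $F$ then gives only that $\pi r$ forces ``$M\models \pi p\force\check v\in\dot Y$''. But $\pi r$ exchanges the finite information $r$ carries about $x(n)(0)$ and $x(n)(1)$ for $n\in F$, so it is typically incompatible with $r$. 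Then no condition forces both $\pi p\force\check v\in\dot Y$ and $q\force\check v\notin\dot Y$, and there is no contradiction. Your two checks (the swap fixes the name, and it exchanges the members of $Z_n$) do not address this.

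A sign that something is missing: your argument never uses that the $Z_n$ consist of $E_0$-classes rather than reals. Yet for $Z_n=\{x(n)(0),x(n)(1)\}$ the lemma is false. In that case $n\mapsto\min_{\mathrm{lex}}Z_n$ is a choice function in $M$, and $\set{n}{g(n)=\min_{\mathrm{lex}}Z_n}$ is a Cohen real over $M$.

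The repair is to compose $\pi$ with a finite-modification automorphism $\sigma$ that flips the finitely many bits on which $\pi r$ and $r$ disagree. Since $\sigma$ moves no $E_0$-class, $\sigma\pi$ still fixes $\seqq{Z_n}{n\in\N}$, the support and the name $\dot Y$, and still sends $p$ to $\pi p$. Now $\sigma\pi r$ is compatible with $r$, and a common extension forces the contradiction. This is exactly the second step of the paper's Indiscernibility Lemma~\ref{lem: indiscernibility second Cohen}: after swapping, the generic $y$ is altered in finitely many bits so that it extends the condition forcing the statement.

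The paper also avoids symmetric names altogether. By Fact~\ref{fact;definability-V(A)}, ``$p\force\check v\in\tau$'' is a formula $\phi(\mathcal Z,\bar a,p^*,v,w)$, and indiscernibility replaces $p^*$ by $q^*$. You already list finite modifications among your generators, so the tool is available to you. It has to be used, because that is where the $E_0$-classes do their work.
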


Let $\dot{z}$ be the $\P$-name for the generic choice function added by $\P$. Let $\tau$ be the $\P$-name for a sequence of pairs of $E_0$-classes defined by $\tau(n)(0) = z(n)$ and $\tau(n)(1)$ is the other member of $Z_n$. Then $\tau$ is a $\P$-name for a virtual $E$-class.

\begin{claim}\label{claim: ergodic name second cohen}
In $V(\seqq{Z_n}{n\in\N})$, $\tau$ is an unstable $(\P,E)$-ergodic name for a virtual $E$-class.
\end{claim}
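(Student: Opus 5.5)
We prove the two properties separately, following the proof of the analogous claim over the basic Cohen model: ergodicity comes from finite ``swap'' automorphisms of $\P$, and instability from a density argument over mutually generic filters.

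\emph{Ergodicity.} For a finite set $F\subset\N$, define $\pi_F$ on $\P$ as follows. For $p\in\P$ and $n\in\dom p$, set $\pi_F(p)(n)$ to be the other member of $Z_n$ when $n\in F$, and $\pi_F(p)(n)=p(n)$ when $n\notin F$. Since each $Z_n$ has exactly two elements, ``the other member'' is well defined in $V(\seqq{Z_n}{n\in\N})$ without any choice. So $\pi_F$ is a definable involution preserving the extension order, hence an automorphism of $\P$. If $z$ is the generic choice function for $G$, then $\pi_F\cdot G$ yields the function $z'$ which agrees with $z$ off $F$ and swaps on $F$. Thus $\tau[\pi_F\cdot G]$ and $\tau[G]$ differ only by finitely many swaps. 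By Remark~\ref{remark: virtual classes finite swaps} these are $E$-equivalent virtual classes, so $\pi_F$ respects $(E,\tau)$. Given $p,q\in\P$, let $F=\set{n\in\dom p\cap\dom q}{p(n)\neq q(n)}$. This set is finite, and $\pi_F(p)$ agrees with $q$ on the common domain, so the two are compatible.

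\emph{Instability.} Let $G,H$ be mutually $\P$-generic with choice functions $z_G,z_H$. It suffices to show that $\set{n}{z_G(n)\neq z_H(n)}$ is infinite. Then $\tau[G]$ and $\tau[H]$ are not equal up to finitely many swaps; their unordered pairs agree, since both come from $Z_n$. Fix $k$. For $(p,q)\in\P\times\P$, pick $n>k$ outside $\dom p\cup\dom q$; this is possible since the domains are finite. Extend $p$ and $q$ at $n$ by the two different members of $Z_n$; no choice is needed to extend at a single coordinate. This shows that the set of pairs forcing some $n>k$ with $z_G(n)\neq z_H(n)$ is dense in $\P\times\P$.

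\emph{Expected obstacle.} The step to check most carefully is that $\tau$ really names a virtual $E$-class in the sense intended here. The sequence of pairs of $E_0$-classes is not a member of $X$ in the extension, so it is treated as a virtual class via Remark~\ref{remark: virtual classes finite swaps}. That is, we force further to choose representatives of each class and a sequence of them. We must verify that $E$-equivalence of such virtual classes is exactly ``equal up to finitely many swaps''. This requires that the representatives chosen in a further extension are pointwise $E_0$-equivalent independently of the choices, which holds because $E_0$-classes are countable. Once this identification is granted, the two arguments above are routine, as in Claim~\ref{claim: ergodic inv between + and ++}.
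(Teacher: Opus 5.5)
Your proof is correct and follows essentially the same route as the paper. Ergodicity comes from the finite swap automorphisms $\pi_F$, with $F$ the set of common coordinates where $p$ and $q$ disagree. Instability comes from a density argument in $\P\times\P$ showing that $\tau_l$ and $\tau_r$ disagree at infinitely many coordinates; your extra discussion of why sequences of pairs of $E_0$-classes are virtual $E$-classes is what the paper takes for granted in Remark~\ref{remark: virtual classes finite swaps}.
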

\begin{proof}
For a finite set $F\subseteq\N$, let $\pi_F$ be the automorphism of $\P$ which replaces $p(n)$ by the other member of $Z_n$, for $n\in F\cap\dom p$, and fixes $p$ otherwise. Note that $\pi_F$ respects $(E,\tau)$.
Given $p,q\in\P$, $\pi_F(p)$ and $q$ are compatible for $F=\set{n\in\dom p\cap\dom q}{p(n)\neq q(n)}$. This concludes that $\tau$ is $(\P,E)$-ergodic.

Finally, a density argument shows that $\P\times\P$ forces that the set of $n\in\N$ on which $\tau_l$ and $\tau_r$ disagree is infinite, and therefore $\P\times\P\force\tau_l\not\mathrel{E}\tau_r$.
\end{proof}
Proposition~\ref{prop: unclass E over second cohen model} now follows from Theorem~\ref{thm: no classification P(On)}. 

\subsection{Another example: $\Z$-shift on $(E_0^\N)^\Z$}\label{subsec: Z shift}


\begin{defn}\label{defn: E Z jump}
Let $X=((2^\N)^\N)^\Z$. For $x\in X$ and $k\in\Z$ let $k\cdot x$ be the shift of $x$ by $k$, defined by $(k\cdot x)(n)=x(n-k)$. Define the relation $E$ on $X$ by
\begin{equation*}
x\mathrel{E}y \iff \exists k\in\Z\ \forall n,m\in\N\ \big( x(n-k)(m)\mathrel{E_0} y(n)(m)\big),
\end{equation*}
that is, $x\mathrel{E}y$ if and only if some shift of $x$ is pointwise $E_0$-equivalent to $y$. Note that $E$ is a Borel equivalence relation.
\end{defn}

\begin{prop}\label{prop: unclass E Z jump}
       There is no absolute complete classification of $E$ using invariants which are sets of sets of ordinals. In particular, ${E} \not\leq_B {=^+}$.
\end{prop}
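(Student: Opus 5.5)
We will follow the pattern of the two previous subsections and apply Theorem~\ref{thm: no classification P(On)}. This requires a choiceless ground model, a poset $\P$ that adds no sets of ordinals over it, and an unstable $(\P,E)$-ergodic name for a virtual $E$-class.

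\textbf{Virtual classes.} Any $\Z$-indexed sequence $\seqq{W_k}{k\in\Z}$, where each $W_k$ is an $\N$-sequence of $E_0$-classes, can be viewed as a virtual $E$-class. Two such sequences are $E$-equivalent if and only if one is a shift of the other. To see this, collapse to make everything countable, choose representatives, and note that the shift $k$ witnessing equivalence is an integer, so it already exists in the ground model.

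\textbf{Ground model and poset.} We will work in a symmetric model $M=V(\mathcal{Z})$ in the style of the second Cohen model (Appendix~\ref{Appendix: second cohen model}). Here $\mathcal{Z}$ is a countable set of "blocks". Each block is a sequence $\seqq{[a_m]_{E_0}}{m\in\N}$ of $E_0$-classes of Cohen-generic reals, and the blocks are arranged in a $\Z$-chain. The arrangement must be such that $M$ knows the successor relation on $\mathcal{Z}$, but there is no choice of a starting block, i.e.\ no bijection $\Z\to\mathcal{Z}$ respecting the successor. The natural construction uses the symmetry group generated by finite-support permutations of the reals within $E_0$-classes together with the shift of $\Z$ acting on the block indices. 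In $M$ we then let $\P$ be the poset of single choices of a block, or equivalently of finite partial bijections between $\Z$ and $\mathcal{Z}$ respecting the successor. Any condition extends uniquely, so the generic is simply an enumeration $\dot{e}\colon \Z\to \mathcal{Z}$. Let $\tau=\seqq{\dot{e}(k)}{k\in\Z}$. This is a name for a virtual $E$-class.

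\textbf{Ergodicity and instability.} For ergodicity we use the automorphisms $\pi_j$ of $\P$ induced by shifting the enumeration by $j\in\Z$. Each $\pi_j$ changes $\tau$ only by a shift, so it respects $(E,\tau)$. Since $\P$ is essentially the set of $\Z$ many atoms (any two conditions determining different starting blocks are moved into each other by some $\pi_j$), $\tau$ is $(\P,E)$-ergodic. Instability should be nearly trivial: for mutually generic $G,H$, the two enumerations differ by a shift, so they would actually be $E$-equivalent. That is bad.

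This failure means the construction must be modified. The block structure has to carry an extra $E_0$-ergodic direction, and the generic should choose representatives rather than an origin. One option is to follow the second Cohen model more literally: each $Z$ is an "$E_0$-class of $\Z$-sequences up to shift", and the generic picks coordinates $\Z\to$ classes, with finitely-supported changes acting as the automorphisms. The target property is that for mutually generic $G,H$, no shift $k$ makes the sequences pointwise $E_0$-equivalent. This should follow by a density argument, as in Claim~\ref{claim: ergodic name second cohen}: for each fixed $k$, the two generics disagree at some coordinate, and there are only countably many $k$ to handle.

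\textbf{Main obstacle.} The key lemma, and the hard part, is the analogue of Lemma~\ref{lemma: no new sets second cohen}: that $\P$ adds no subsets of $V$ over $M$. We will prove it by the usual symmetry argument. Given a name for a set of ground-model elements with finite support, we show its value is decided by the empty condition. To do this, we find an automorphism of the symmetric model that fixes the support and moves any condition to one compatible with any other. Checking this homogeneity while also arranging that the name $\tau$ is unstable is the delicate balance we expect to be the main difficulty. Once it is done, Proposition~\ref{prop: unclass E Z jump} follows from Theorem~\ref{thm: no classification P(On)}.
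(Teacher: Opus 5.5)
You correctly identify the three ingredients Theorem~\ref{thm: no classification P(On)} needs, but you never produce a ground model, poset and name that have them. That construction is the entire content of the proof, so there is a genuine gap.

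Your first attempt fails even before instability. A single nonempty $\Z$-chain $\mathcal{Z}$ with a successor map always admits a successor-preserving bijection with $\Z$ in ZF: fix any one block, which needs no choice. So the posited model cannot exist, and $\P$ is essentially atomic and adds nothing. The failure of choice has to come from a countable \emph{family} of sets with empty product.

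Your repair does not supply the right family. The relation of Definition~\ref{defn: E pairs finite swaps} tolerates finitely many swaps, but the present $E$ tolerates only a single shift $k$ applied to every row at once. So finitely supported changes, such as independently re-choosing the origin of finitely many rows, do not respect $(E,\tau)$.

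If instead you keep non-periodic rows and use only global shifts, ergodicity fails outright. Suppose $p$ and $q$ choose origins in two rows, and the origins of the two rows differ between $p$ and $q$ by different amounts. Any automorphism respecting $(E,\tau)$ moves all origins by one common amount, so it cannot make $\gamma\cdot p$ compatible with $q$. If instead the generic only picks $E_0$-representatives of classes already arranged in the ground model, this is invisible to $E$ and $\tau$ is stable.

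The missing idea is to make the rows periodic with pairwise coprime periods, so that a single global shift can act independently on any finitely many rows. The paper works in $V(\seqq{A_p}{p\textrm{ prime}})$, where $A_p$ is the set of cyclic rotations of a $p$-tuple of $E_0$-classes of Cohen reals; there $\prod_p A_p=\emptyset$. It takes $\P$ to be the finite partial choice functions in $\prod_p A_p$. It lets the row of $\tau$ indexed by the $n$th prime $r$ be the chosen tuple $\dot y(r)$ repeated periodically along $\Z$.

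With this setup the required properties follow:
\begin{itemize}
\item The automorphism $\gamma_k$ rotating every chosen tuple by $k$ shifts $\tau$ by $k$, so it respects $(E,\tau)$.
\item Given conditions $p,q$, the Chinese Remainder Theorem yields one $k$ with $\gamma_k(p)$ agreeing with $q$ on their common domain. This is ergodicity.
\item Instability is a density argument over the countably many $k$: extend $p,q$ on a fresh prime $r$ so that the $r$-row of $\tau_l$ differs from the $r$-row of $k\cdot\tau_r$.
\end{itemize}
Finally, the lemma that $\P$ adds no subsets of $V$ over this model is proved as in the second Cohen model, from the indiscernibility of the members of each $A_p$. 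Your proposal leaves this lemma open as well, so the step you yourself call the main obstacle is not carried out either.
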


\begin{remark}
    The equivalence relation $E$ is $(E_0^\N)^{[\Z]}$, the \emph{$\Z$-jump of $E_0^\N$}, as defined by Clemens and Coskey~\cite{ClemensCoskey2022}. The irreducibility ${E}\not\leq_B {=^+}$ was proven in~\cite[Section~6]{Allison-Shani-abelian-products}, answering a question from~\cite{ClemensCoskey2022}.
\end{remark}

\begin{remark}[Virtual $E$-classes]\label{remark: virtual classes Z jump}
Any $\Z\times\N$-array of $E_0$-classes can be viewed as a virtual $E$-class. Two such virtual $E$-classes are $E$-equivalent if they are equal up to a $\Z$-shift of the array.
\end{remark}

We will construct a model with the following properties. The model is of the form 
\begin{equation*}
V(\seqq{A_p}{p\textrm{ prime}})    
\end{equation*}
where each $A_p$ is the set of all cyclic permutation of a sequence of $p$ many $E_0$-classes. That is, there is some sequence $\bar{a} = a_0, \dots, a_{p-1}$, where each $a_i$ is an $E_0$-class, so that $A_p = \set{k\cdot \bar{a}}{k < p}$, where $k\cdot \bar{a}$ is the cyclic permutation of $\bar{a}$ by $k$. In this model, $\prod_p A_p = \emptyset$. See Appendix~\ref{Appendix: cyclic permutations model} for the construction.

\begin{defn}[The poset $\P$]
Working in $V(\seqq{A_p}{p\textrm{ prime}})$, let $\P$ be the poset of finite partial choice functions in $\prod_p A_p$, ordered by extension.
\end{defn}

The key point is the following lemma (see~Appendix~\ref{Appendix: cyclic permutations model}).
\begin{lemma}\label{lemma: no new sets CRT model}
Forcing with $\P$ over $V(\seqq{A_p}{p\textrm{ prime}})$ adds no subsets of $V$. In particular, no sets of ordinals are added.
\end{lemma}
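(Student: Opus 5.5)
We need to prove Lemma~\ref{lemma: no new sets CRT model}: forcing with $\P$ over $M=V(\seqq{A_p}{p\textrm{ prime}})$ adds no subsets of $V$. The plan follows the proofs of Lemma~\ref{lemma: no new reals adding one subset} and Lemma~\ref{lemma: no new sets second cohen}. That is, we present $M$ as a symmetric model, and then represent $M[z]$ (with $z$ the $\P$-generic choice function) as another symmetric extension of the same ambient Cohen-type extension $V[\bar g]$. Any set of ground-model elements in $M[z]$ is then definable from $z$, a finite amount of generic information, and ground parameters. The final step is a homogeneity argument showing such a set lies in $M$, and in fact is determined in a way that puts it in $V$.

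First, the setup. Following Appendix~\ref{Appendix: cyclic permutations model}, the model is obtained as follows. We add, for each prime $p$ and $i<p$, Cohen reals $g_{p,i}\in 2^\N$, so that $a_{p,i}=[g_{p,i}]_{E_0}$. The symmetry group $\mathcal{G}$ is generated by two kinds of automorphisms: finite modifications of each $g_{p,i}$ (preserving $E_0$-classes), and, coordinatewise on each prime $p$, cyclic rotations $k_p\in\Z/p$ of the block $(g_{p,0},\dots,g_{p,p-1})$. Supports are finite sets of primes together with finite parts of the reals. The key observation is that a condition in $\P$ picks, for finitely many $p$, a rotation $k\cdot\bar a_p$. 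Thus $M[z]$ is contained in the symmetric model $N$ where the rotation groups are removed: $N=V(\seqq{\bar a_p}{p})$, the model generated by the full sequence of ordered tuples of $E_0$-classes, with symmetry group consisting only of the finite $E_0$-modifications. Indeed $z$ is in $N$ up to finitely many rotations. More precisely, $z$ is $\P$-generic over $M$, and $M[z]$ equals $V(\seqq{A_p}{p},z)$. Note $z$ recovers, from each $\bar a_p$, a rotation $z(p)$, and genericity makes $z$ mutually compatible with the rotation-invariant symmetric structure.

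Second, the main argument. Let $X\in M[z]$ with $X\subseteq V$, and write $X$ as definable in $V[\bar g]$ from $z$, finitely many $E_0$-classes, finitely many $A_p$'s, and a ground parameter. We fix a finite set $P_0$ of primes involved in the support. We then show that any $v\in V$ with $v\in X$ is decided by conditions of the product forcing restricted to $P_0$ together with the rotation values $z\rest P_0$. This works because automorphisms moving coordinates outside $P_0$ (finite $E_0$-modifications and rotations) fix the support and fix $z\rest P_0$. Hence by the usual symmetry lemma, $X$ is definable in $V[\seqq{g_{p,i}}{p\in P_0,i<p}]$ from the finitely many ordered tuples $z(p)\in A_p$ for $p\in P_0$ and ground parameters. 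Those tuples are concrete elements of $M$ (each $z(p)$ is an ordered tuple of $E_0$-classes, i.e.\ a finite sequence of classes present in $M$). Next, we use the homogeneity of Cohen forcing under finite $E_0$-modifications for the finitely many reals indexed by $P_0$. Because these modifications fix each class, $X$ is definable from the classes, and Claim~\ref{claim: levy hod in L[x]} (applied to finitely many Cohen reals and their $E_0$-classes) gives $X\in V$.

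The main obstacle is the second step. There we must justify that $z$ restricted to primes outside $P_0$ cannot contribute, namely that any definition mentioning the full $z$ can be symmetrized. The delicate point is that an automorphism rotating $\bar a_p$ for $p\notin P_0$ must be paired with the corresponding change of $z(p)$. I would handle this by working with the product $\mathrm{Add}\times\P$ (more precisely, a two-step iteration, presented as a single symmetric system in which $\P$-conditions are names for finite partial choices) and using the automorphism $(\rho,\pi)$ that rotates block $p$ and simultaneously shifts the $\P$-condition at $p$. Checking that this pair is an automorphism of the iteration and fixes the name for $X$ is where the real work lies. It also seems to be where, unlike the second Cohen model, the cyclic structure of order $p$ is needed: the coordinatewise rotation is a group action compatible with choice functions.
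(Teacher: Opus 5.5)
There is a genuine gap, and it shows up as a false conclusion. You aim to prove that every $X\subseteq V$ in $M[z]$ lies in $V$. This already fails for sets in $M$. Since $M$ is transitive and contains $\mathcal A$, it contains $\mathrm{tc}\{\mathcal A\}$, so it contains each Cohen real $g_{p,i}\in[g_{p,i}]_{E_0}$, which is a subset of $\N$ not in $V$. The lemma only claims $X\in M$.

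The error enters when you list the parameters defining $X$ as $z$, $E_0$-classes, the $A_p$'s and a ground parameter. By Fact~\ref{fact;definability-V(A)} the parameters range over $\mathrm{tc}\{\mathcal A\}$ and may include the reals $g_{p,i}$ themselves. Correspondingly, your supports must fix whole blocks $\{g_{p,i}:i<p\}$, not `finite parts of the reals', or the classes would not be in the symmetric model. Finite $E_0$-modifications do not fix these reals, so your final homogeneity step and the appeal to Claim~\ref{claim: levy hod in L[x]} fail. The right endpoint is $X\in V[\seqq{g_{p,i}}{p\in P_0,\ i<p}]$, which lies in $M$ because each $g_{p,i}\in M$.

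Your containment $M[z]\subseteq N=V(\seqq{\bar a_p}{p})$ is also false. For $z$ generic over $V[\bar g]$ (which is the case you need), a density argument gives $z(p)\neq\bar a_p$ for infinitely many $p$, and in fact $z\notin V[\bar g]$.

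Beyond that, the step you call `where the real work lies' is the entire content of the lemma, and you leave it undone. That step is pairing a rotation of block $p$ with a change of $z(p)$, and checking that this fixes the name for $X$ and acts homogeneously off $P_0$.

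The paper does this inside $M$, with no symmetric-system bookkeeping. Suppose $\tau\in M$ is defined from $\mathcal A$, parameters from $A_r$ for $r$ in a finite set $a$, and $w\in V$. Then `$p\force\check v\in\tau$' is a formula $\phi(\mathcal A,\bar a,p^*,v,w)$ with $p^*=p\rest(\dom p\setminus a)$. Lemma~\ref{lem: indiscernibility CRT model} lets you replace $p^*$ by any $q^*$ with the same domain, because rotating Cohen blocks outside $a$ fixes $\mathcal A$ and the parameters. So $p\rest a$ already forces $\check v\in\tau$, and $\tau[G]=\set{v}{G\rest a\force\check v\in\tau}\in M$.

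Your route can also be repaired, more cheaply than you propose. Let $\bar g'$ be $\bar g$ with each block $p$ rotated so that its ordered tuple of classes is $z(p)$. Then $\bar g'$ is again Cohen generic over $V$, since $(\bar g,z)$ is generic for a product and the rotation is induced by an automorphism in $V$ on a dense set. Moreover $M[z]=V(z)$ is the model generated by the ordered tuples of $\bar g'$. Homogeneity of finite modifications off $P_0$ then gives $X\in V[\bar g'\rest P_0]=V[\bar g\rest P_0]\subseteq M$, with no paired automorphisms needed. As written, however, your proposal has an unproved core and a false conclusion.
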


Let $\dot{y}$ be the $\P$-name for the generic choice function added by $\P$. 
Let $\tau$ be the $\P$-name for a $\Z\times\N$ array of $E_0$-classes defined so that
\begin{equation*}
\tau(k)(n)=\dot{y}(p)(k \bmod p),
\end{equation*}
where $p$ is the $n$'th prime. That is, the $n$'th row of $\tau$ is the $p$-tuple $\dot{y}(p)$ repeated periodically along the $\Z$-axis. Then $\tau$ is a $\P$-name for a virtual $E$-class.

\begin{claim}\label{claim: ergodic name Z jump}
In $V(\seqq{A_p}{p\textrm{ prime}})$, $\tau$ is an unstable $(\P,E)$-ergodic name for a virtual $E$-class.
\end{claim}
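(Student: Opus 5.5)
The argument follows the proof of Claim~\ref{claim: ergodic name second cohen}, replacing swaps by cyclic shifts and using the Chinese remainder theorem to turn finitely many cyclic shifts into one $\Z$-shift. It has two parts: ergodicity, which needs automorphisms of $\P$ respecting $(E,\tau)$, and instability, which needs a density argument for $\P\times\P$.

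\textbf{Ergodicity.} Each $A_p$ carries a natural $\Z/p$-action by cyclic permutation, and this action is transitive on $A_p$. For a finite set $F$ of primes and a tuple $\bar{k}=\seqq{k_p}{p\in F}$ with $k_p\in\Z/p$, let $\pi_{F,\bar k}$ be the automorphism of $\P$ that replaces $q(p)$ by $k_p\cdot q(p)$ for $p\in F\cap\dom q$ and fixes the other coordinates. This is a definable bijection of $\P$ in $V(\seqq{A_p}{p\textrm{ prime}})$, so it is an automorphism.

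Given $q,r\in\P$, let $F=\dom q\cap\dom r$. By transitivity, for each $p\in F$ choose $k_p$ with $k_p\cdot q(p)=r(p)$. Then $\pi_{F,\bar k}(q)$ is compatible with $r$.

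To see that $\pi_{F,\bar k}$ respects $(E,\tau)$, use the Chinese remainder theorem to pick one integer $k$ with $k\equiv k_p \pmod p$ for all $p\in F$. Let $N=\prod_{p\in F}p$, and apply the Chinese remainder theorem once more to choose $k'\equiv k \pmod N$ with $k'\equiv 0\pmod p$ for each of the finitely many remaining primes relevant here. That last requirement cannot hold for infinitely many primes simultaneously, and this is the crux of the whole claim.

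The fix is to read the definition of $E$ correctly. The definition requires a single $k$ with $x(n-k)(m)\mathrel{E_0} y(n)(m)$, where the first index is the $\Z$-coordinate and the second is the row index in $\N$. Shifting the whole array by $k$ shifts every row $p$ by $k \bmod p$, and this is nonzero for infinitely many $p$ when $k\neq0$. So $\pi_{F,\bar k}$ alone does not respect $(E,\tau)$.

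Instead I use the automorphism $\sigma_k$ that shifts \emph{every} coordinate: $\sigma_k(q)(p)=k\cdot q(p)$ for all $p\in\dom q$. Then $\tau[\sigma_k G]$ is exactly the shift of $\tau[G]$ by $k$, possibly with a sign depending on convention, so $\sigma_k$ respects $(E,\tau)$. Given $q,r$, choose $k$ by the Chinese remainder theorem with $k\equiv k_p\pmod p$ for $p\in\dom q\cap\dom r$. Then $\sigma_k(q)$ agrees with $r$ on the common domain, so the two conditions are compatible. This establishes $(\P,E)$-ergodicity.

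\textbf{Instability.} Let $G,H$ be mutually generic. Suppose $\tau[G]\mathrel{E}\tau[H]$, witnessed by some $k\in\Z$, and fix a condition $(q,r)$ forcing this for a fixed $k$. Choose a prime $p\notin\dom q\cup\dom r$. Extend $q$ and $r$ at $p$ so that $r(p)\neq k\cdot q(p)$. This is possible because $|A_p|=p\ge 2$. We need the entries of the relevant tuples to be distinct $E_0$-classes, so that distinct cyclic permutations really are different; this holds by the construction of the model. The extended condition then forces that row $p$ of $\tau_r$ is not the $k$-shift of row $p$ of $\tau_l$. Since there are countably many $k$, a density argument over all $k$ shows that $\P\times\P\force\tau_l\not\mathrel{E}\tau_r$.

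I expect the main obstacle to be bookkeeping for the conventions. One must check that $\sigma_k$ is well defined as a shift compatible with $\tau(k)(n)=\dot y(p)(k\bmod p)$, and that this shift corresponds to the $\Z$-shift in Definition~\ref{defn: E Z jump}.
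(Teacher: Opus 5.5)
Your final argument is correct and is essentially the paper's proof: your global rotation $\sigma_k$ is the paper's $\gamma_k$, compatibility of $\sigma_k(q)$ and $r$ comes from the Chinese Remainder Theorem applied on $\dom q\cap\dom r$, and instability comes from a density argument, for each fixed $k$, at a prime outside both domains. Delete the abandoned $\pi_{F,\bar k}$ paragraphs, which (as you noticed yourself) do not respect $(E,\tau)$; also note that any fresh prime works, without the paper's extra requirement $r>|k|$, because $|A_r|\ge 2$ already lets you make the $k$-rotated rows disagree.
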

\begin{proof}
For $k\in\Z$, let $\gamma_k$ be the automorphism of $\P$ which replaces each chosen member of $A_r$ by its cyclic rotation by $k$, for every prime $r$. Note that $\gamma_k$ shifts each coordinate of the array $\tau$ by $k$, and therefore respects $(E,\tau)$.
Given $p,q\in\P$, for each prime $r\in\dom p\cap\dom q$ there is some $k_r\in\Z$ taking $p(r)$ to $q(r)$. By the Chinese Remainder Theorem there is some $k\in\Z$ with $k\equiv k_r \pmod{r}$ for all such $r$. Now $\gamma_k(p)$ and $q$ agrees on their common domain, and are therefore compatible. This concludes that $\tau$ is $(\P,E)$-ergodic.

Finally, given any $(p,q)\in\P\times\P$ and $k\in\Z$, we may choose a prime $r>|k|$ outside the domains of $p,q$, and extend $p,q$ to $p',q'$ so that $(p',q')$ forces that the $r$'th rows of $\tau_l$ and $k\cdot\tau_r$ are not equal. A density argument therefore shows that $\P\times\P\force\tau_l\not\mathrel{E}\tau_r$.
\end{proof}
Proposition~\ref{prop: unclass E Z jump} now follows from Theorem~\ref{thm: no classification P(On)}.

\section{Appendix}

\subsection{Abstract nonsense}

Suppose $A$ is a set in some generic extension of $V$. Let $V(A)$ be the minimal transitive model of ZF extending $V$ and containing the set $A$. For example, this model can be written as the class directed union of $L(A,x)$ for $x\in V$.
We think of this as the model \textbf{generated by $A$} (over $V$).

There are similar ways of ``generating models'' from a set $A$: $L(A)$, $\hod_{\mathrm{tc}\{A\}}$, $\hod_{V,\mathrm{tc}\{A\}}$, as well as various symmetric models. See \cite[Chapter~13]{Jec03} for ordinal definability and \cite[Chapter~15]{Jec03}, \cite[Chapter~17]{Halbeisen2017-CombinatorialSetTheory} for symmetric models. 
For the $A$'s we study here, all these models will have the same relevant properties. Also, in our examples the models $V(A)$ and $\hod_{V,\mathrm{tc}\{A\}}$ will coincide.

The minimality of $V(A)$ implies the following `definable closure' property.
\begin{fact}
\label{fact;definability-V(A)}
The following holds in $V(A)$.
For any set $X$, there is some formula $\psi$, parameters $\bar{a}$ from the transitive closure of $A$ and $v\in V$ such that $X$ is the unique set satisfying $\psi(X,A,\bar{a},v)$.
Equivalently, there is a formula $\varphi$ such that $X=\set{x}{\varphi(x,A,\bar{a},v)}$.
\end{fact}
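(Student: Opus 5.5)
The plan is to use the minimality of $V(A)$ directly. Let $M$ be the class of all sets $X$ for which there are a formula $\psi$, parameters $\bar{a}$ from $\mathrm{tc}\{A\}$ and $v\in V$ such that $X$ is the unique set satisfying $\psi(X,A,\bar{a},v)$ in $V(A)$. I will show that $M$ is a transitive class containing $V\cup\{A\}$ and satisfying ZF. Minimality of $V(A)$ then gives $V(A)\subseteq M$, which is exactly the claim. To keep this honest, I will work with the concrete presentation $V(A)=\bigcup_{x\in V}L(A,x)$ (equivalently $L(V\cup\{A\})$ built by the constructible hierarchy relative to the class $V$). Each of its elements is then obtained as follows: there is an ordinal $\alpha$ and a $v\in V$ such that the element appears at stage $\alpha$ of $L(\mathrm{tc}\{A\}\cup\{A\}\cup v)$.

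The first step is the key reduction. Every element of $L(T)$, for a transitive set $T$, is definable in $L(T)$ from $T$, finitely many elements of $T$, and ordinals. This is the standard fact that $L(T)=\bigcup_\alpha L_\alpha(T)$ admits a canonical surjection from $\mathrm{On}\times T^{<\omega}$, definable from $T$. I take $T=\mathrm{tc}(\{A\})\cup\mathrm{tc}(v)$ for a suitable $v\in V$, chosen transitive and closed enough. Then elements of $T$ are either in $\mathrm{tc}\{A\}$ or in $V$. The ordinals, the finitely many $V$-parameters, and $v$ itself can be bundled into a single $v'\in V$, since finite tuples of elements of $V$ lie in $V$. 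Next, the definition of $X$ inside $L(T)$ has to be relativized to a definition inside $V(A)$. Here $L(T)$ is definable in $V(A)$ from $T$, and $T$ is definable from $A$ and $v$. So $X$ is the unique set satisfying $\psi(X,A,\bar a,v')$ for a suitable $\psi$ evaluated in $V(A)$.

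The main obstacle is the fact that $V$ is a class, not a set. Mentioning $V$ in the definition, or even knowing that $V(A)$ can define $V$, is not automatic. I avoid this by never quantifying over $V$: the only uses of $V$ are parameters $v'\in V$. The construction of $L(T)$ from $T$ is absolute between $V(A)$ and the ambient universe, so no predicate for $V$ is needed. What remains is to check that every element of $V(A)$ lies in a single $L(T)$ with $T$ of the above form. That holds because the union is directed: any $L(A,x)$ and $L(A,y)$ are both contained in $L(A,\langle x,y\rangle)$.

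Finally, the equivalent formulation follows by setting $\varphi(x,A,\bar a,v):=\exists X\,(\psi(X,A,\bar a,v)\wedge x\in X)$.
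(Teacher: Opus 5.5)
Your argument is correct, but it goes the opposite way from the paper's. The paper argues top-down. Inside $V(A)$ it forms $\mathrm{HOD}_{V,\mathrm{tc}\{A\}}$, cites that this is a transitive model of ZF containing $V$ and $A$, and concludes by minimality that it is all of $V(A)$. You argue bottom-up. You start from the presentation $V(A)=\bigcup_{x\in V}L(A,x)$ and use the canonical surjection $\mathrm{On}\times T^{<\omega}\to L(T)$, which is definable from $T$. You then relativize the resulting definition to the class $L(T)$, which $V(A)$ defines from $A$ and one parameter in $V$.

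Your opening paragraph announces the paper's plan (check that $M$ satisfies ZF, then invoke minimality), but you never carry it out. The direct argument makes that plan unnecessary, so drop the framing. Two small fixes:
\begin{itemize}
\item Take $T=\mathrm{tc}(\{A\})\cup\mathrm{tc}(\{x\})$, so that $L(A,x)\subseteq L(T)$ is immediate.
\item Each element lying in a single $L(A,x)$ is just the definition of the union. Directedness is needed only to show the union is a ZF model.
\end{itemize}

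The trade-off between the two routes:
\begin{itemize}
\item The paper's proof is two lines and does not depend on how $V(A)$ is presented. However, forming $\mathrm{HOD}_{V,\mathrm{tc}\{A\}}$ inside $V(A)$ treats $V$ as a class of parameters there.
\item Your route produces genuine $\in$-formulas with a single parameter $v\in V$, and never needs $V$ to be definable in $V(A)$. The cost is that you rely on the presentation $\bigcup_{x\in V}L(A,x)$. Verifying that this union is a ZF model is itself a minimality-type argument, carried out in the generic extension containing $A$.
\end{itemize}
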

\begin{proof}
    Work in the model $V(A)$. Consider the model $\mathrm{HOD}_{V,\mathrm{tc}\{A\}}$ of all hereditarily definable sets using parameters which are members of $V$, the transitive closure of $A$, as well as the set $A$ itself. (That is, all sets definable as in the statement of the fact.) Then $\mathrm{HOD}_{V,\mathrm{tc}\{A\}}$ is a transitive model of ZF (see~\cite[Theorem~13.26]{Jec03}), which extends $V$ and contains $A$. By minimality of $V(A)$, we conclude that it is equal to $\mathrm{HOD}_{V,\mathrm{tc}\{A\}}$, and so every set is definable in this way.
\end{proof}

\subsection{The basic Cohen model}\label{Appendix: basic cohen model}
We recall here `the basic Cohen model' in which the axiom of choice fails (see \cite{Kan08}). 

Let $\C$ be Cohen forcing for adding a member to $\R^\N$. A condition in $\C$ is a finite sequence $p$ so that $p(i)$ is a non-empty open subset of $\R$, for each $i$ in its domain. Say that a condition $p$ extends $q$ if the domain of $p$ extends the domain of $q$ and $p(i)\subset q(i)$ for each $i\in \dom q$.\footnote{$\C$ is (equivalent to) the topological version of Cohen forcing for the space $\R^\N$. See~\cite[Section~26]{Jec03}.}

A generic filter $G\subset \C$ is naturally identified with a `generic real' $x\in\R^\N$, where $\{x(i)\} = \bigcap_{p\in G}p(i)$. 
Let $x\in\R^\N$ be a $\C$-`generic real' over $V$. Consider the model
\begin{equation*}
V(A) , \textrm{where } A = \set{x(n)}{n\in\N}.    
\end{equation*}
 
We will use the following key lemma. See \cite[p.133]{Felgner-ZF-set-thery-1971}, also \cite[p.19]{Anti-classification-CMMS} for a proof in this presentation of the basic Cohen model.

\begin{lemma}[Continuity Lemma]\label{lem;continuity-Cohen-model}
Let $\phi$ be a formula, $\bar{a}=a_0,...,a_{n-1}$ a finite sequence of distinct members of $A$, and $v\in V$. Suppose $\phi(A,\bar{a},v)$ holds in $V(A)$.
Then there are open sets $U_0,...,U_{n-1}$ such that $a_i\in U_i$ and for any $\bar{b} = b_0,...,b_{n-1}$ consisting of distinct elements from $A$, if $b_i\in U_i$ for all \( i < n \), then $\phi(A,\bar{b},v)$ holds in $V(A)$.
\end{lemma}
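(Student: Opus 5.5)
This is the standard continuity (support) lemma for the basic Cohen model. I would prove it through the symmetric-model presentation of $V(A)$, combined with the homogeneity of $\C$ under finite-support modifications of the generic.

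The first step is to move the statement $\phi(A,\bar a,v)$ into $V[x]$. Because $V(A)$ is definable in $V[x]$ from $A$ and ground-model parameters (as the directed union of $L(A,w)$ for $w\in V$), $\phi(A,\bar a,v)$ is equivalent in $V[x]$ to $\phi^{V(A)}(A,\bar a,v)$. Write $a_i=x(k_i)$ for distinct indices $k_0,\dots,k_{n-1}$. Let $\dot A$ be the canonical name for $\set{\dot x(m)}{m\in\N}$. By the forcing theorem, some condition $p\in G$ forces
\begin{equation*}
\phi^{V(\dot A)}(\dot A,\dot x(k_0),\dots,\dot x(k_{n-1}),\check v).
\end{equation*}
We may assume each $k_i\in\dom p$, and we set $U_i=p(k_i)$. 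Since $p\in G$, we have $a_i\in U_i$.

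Now take distinct $b_0,\dots,b_{n-1}\in A$ with $b_i\in U_i$, say $b_i=x(l_i)$. I would build a new generic $y\in\R^\N$ with $V[y]=V[x]$, $\set{y(m)}{m\in\N}=A$, $y(k_i)=b_i$ for each $i$, and $y$ meeting $p$. To do this, let $\pi$ be a finite permutation of $\N$ with $\pi(k_i)=l_i$, and set $y=x\circ\pi$. Then $y$ is $\C$-generic over $V$, because finite permutations of coordinates induce automorphisms of $\C$. It lies in the same extension and has range exactly $A$.

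The difficulty is that $y$ might not meet $p$ on coordinates $j\in\dom p\setminus\{k_i\}$. The coordinates $k_i$ themselves are fine, since $y(k_i)=b_i\in U_i=p(k_i)$. The main obstacle is therefore to adjust $y$ on the finitely many remaining coordinates of $\dom p$ while keeping both genericity and the range $A$.

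I would first try the following fix. For each $j\in\dom p$ with $j\notin\{k_i\}$, pick an index $m_j$ such that $x(m_j)\in p(j)$, with the $m_j$ distinct, outside $\{l_i\}$, and chosen freshly. Such indices exist by genericity, since infinitely many coordinates of $x$ land in any given open set. Then extend $\pi$ to a finite permutation that also sends $j\mapsto m_j$. The resulting $y=x\circ\pi$ is still a finite-support permutation of $x$, so it is generic with $V[y]=V[x]$ and $\mathrm{ran}(y)=A$, and now $y$ meets $p$.

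Finally, apply the forcing theorem in $V[y]$: since $p$ lies in the generic filter of $y$, we get $\phi^{V(A)}(A,y(k_0),\dots,y(k_{n-1}),v)$, that is, $\phi(A,\bar b,v)$ holds in $V(A)$.
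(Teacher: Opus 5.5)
Your proof is correct. It is the standard argument: the paper only cites Felgner and Calderoni--Marker--Motto Ros--Shani for this lemma, and its own proof of the analogous Indiscernibility Lemma for the second Cohen model uses exactly your scheme. There, one takes a condition $p$ in the generic forcing $\phi^{V(\dot A)}(\dot A,\dot x(k_0),\dots,\check v)$ and builds a second generic $y$ with $V[y]=V[x]$, the same generated set, the prescribed coordinates, and $p$ in its filter.

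Your finite permutation sending $k_i\mapsto l_i$ and each remaining $j\in\dom p$ to a fresh $m_j$ with $x(m_j)\in p(j)$ plays the role of the coordinate swaps there. Two routine details are left implicit: that $y(i)\in p(i)$ for all $i\in\dom p$ puts $p$ in the filter of $y$ (by a density argument), and that $\pi$ induces an automorphism of $\C$ once conditions are read as finite partial functions.
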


\begin{fact}[V(A)]
    The set $A$ is Dedekind-finite set, that is, there is no sequence of distinct members of $A$.
\end{fact}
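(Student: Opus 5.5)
We argue in $V(A)$ by contradiction, using the Continuity Lemma~\ref{lem;continuity-Cohen-model}. Suppose $f\colon \N\to A$ is injective, i.e.\ $\seqq{f(n)}{n\in\N}$ is a sequence of distinct members of $A$. By Fact~\ref{fact;definability-V(A)}, $f$ is definable from $A$, a finite tuple $\bar{a}=a_0,\dots,a_{k-1}$ of distinct members of $A$ (the transitive closure of $A$ beyond $A$ itself consists of ground-model material, so we may absorb those parameters into $v$), and some $v\in V$. Say $f$ is the unique set satisfying $\psi(f,A,\bar{a},v)$. Since $f$ is injective and $\bar{a}$ is finite, there is some $n$ with $f(n)=c\notin\{a_0,\dots,a_{k-1}\}$.

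The plan is to apply the Continuity Lemma to the formula
\begin{equation*}
\phi(A,\bar{a},c,v)\colon\quad \text{``the unique } f \text{ with } \psi(f,A,\bar{a},v) \text{ satisfies } f(\check{n})=c\text{''},
\end{equation*}
with the tuple $\bar{a}\frown c$ of distinct members of $A$. This yields open sets $U_0,\dots,U_{k-1},U$ around $a_0,\dots,a_{k-1},c$ such that for any distinct $b_0,\dots,b_{k-1},d\in A$ with $b_i\in U_i$ and $d\in U$, the formula $\phi(A,\bar{b},d,v)$ holds. We take $b_i=a_i$, and choose $d\in A\cap U$ with $d\neq c$ and $d\notin\bar{a}$. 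Then both $\phi(A,\bar{a},c,v)$ and $\phi(A,\bar{a},d,v)$ hold, so the same function $f$ satisfies $f(n)=c$ and $f(n)=d$. This contradicts $c\neq d$.

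The main obstacle is producing such a $d$, which requires that $A\cap U$ be infinite (indeed, just contain a point other than $c$ and the $a_i$). This follows from genericity in $V[x]\supseteq V(A)$: for any basic open $U$ and any $m$, the set of Cohen conditions $p$ with some $i>m$, $i\in\dom p$, and $p(i)\subset U$ is dense. So $x(i)\in U$ for infinitely many $i$, and since $x$ is injective by genericity, $A\cap U$ is infinite. The argument is valid as long as the open sets can be taken to be basic open, which can always be arranged by shrinking.
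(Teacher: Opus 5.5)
Your argument is correct. It is the standard direct support argument: a set definable from $A$, a finite $\bar a\subseteq A$ and $v\in V$ cannot tell $c\notin\bar a$ apart from nearby points of $A$. Your genericity check that every basic open set, which lies in $V$, meets $A$ in infinitely many points supplies the needed $d$. Folding the parameters from $\mathrm{tc}\{A\}\setminus A$ into the ground-model parameter is the same convention the paper uses in its proof of Lemma~\ref{lemma: no new reals adding one subset}.

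The paper gives no separate proof of the fact. It remarks that it follows from Lemma~\ref{lemma: no new reals adding one subset}. Suppose $\seqq{a_n}{n\in\N}$ were a sequence of distinct members of $A$ in $V(A)$, and let $G\subseteq A$ be $\P$-generic. Then $\set{n}{a_n\in G}$ is a subset of $\N\subseteq V$, so by the lemma it lies in $V(A)$. On the other hand, finiteness of conditions and injectivity of $n\mapsto a_n$ give a density argument showing it differs from every subset of $\N$ in $V(A)$.

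Both routes rest on the same two ingredients: the Continuity Lemma~\ref{lem;continuity-Cohen-model}, and the fact that each neighborhood contains infinitely many points of $A$. The lemma's proof uses the latter to find tuples $\bar b'$ disjoint from $\dom r$.

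Your route is more elementary. It needs no forcing over the choiceless model $V(A)$, and it isolates exactly why $A$ is Dedekind-finite: points of $A$ outside a finite support are indiscernible. The paper's route gets Dedekind-finiteness as a free corollary of the stronger statement that $\P$ adds no subsets of $V$. That stronger statement is what the applications in \S\ref{subsec: forcing over basic Cohen model} actually require.
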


The key technical point, used in the applications above, was Lemma~\ref{lemma: no new reals adding one subset}. Note that the Lemma itself witnesses failure of choice in $V(A)$, and in particular implies that $A$ is Dedekind finite. We provide a proof here.

\begin{proof}[Proof of Lemma~\ref{lemma: no new reals adding one subset}]
Suppose $\tau$ is a $\P$-name for a subset of $V$, $\tau\in V(A)$. As in Fact~\ref{fact;definability-V(A)}, let $\bar{a}\subset A$ be finite, $w\in V$ a parameter, and $\psi$ a formula so that in $V(A)$, $\tau$ is defined as the unique solution to $\psi(\tau,A,\bar{a},w)$.

We will show that for any condition $p\in\P$ and any $v\in V$, if $p$ forces $\check{v}\in\tau$ then $p\restriction \bar{a}$ forces the same. It then follows that for any generic filter $G\subset P$ over $V(A)$, the values of $\tau[G]$ are determined by the single condition $q = G\restriction \bar{a}$. That is, $\tau[G]$ may be defined in $V(A)$ as the set of all $v$ for which $q\force \check{v}\in\tau$.

Fix a condition $p\in \P$ and $v\in V$ so that $p\force \check{v}\in\tau$. Assume that the domain of $p$ is of the form $\bar{a},\bar{b}$, with $\bar{b}$ disjoint from $\bar{a}$.
By the Continuity Lemma, there are infinitely many distinct tuples $\bar{b}'$, disjoint from $\bar{a}$, such that $p[\bar{b}']\force v\in\tau$ as well, where $p[\bar{b}']$ is the condition with domain $\bar{a},\bar{b}'$ defined on $\bar{b}'$ as $p$ is defined on $\bar{b}$.
Note that $p[\bar{b}']$ extends $p\restriction \bar{a}$.

Now for any $r$ extending $p\restriction \bar{a}$, there is some $\bar{b}'$ such that $p[\bar{b}']\force \check{v}\in\tau$ and $p[\bar{b}']$ is compatible with $r$ (take $\bar{b}'$ disjoint from the domain of $r$).
It follows that $p\restriction \bar{a}$ forces $\check{v}\in\tau$.
\end{proof}

\subsection{The second Cohen model}\label{Appendix: second cohen model}

We recall here `the second Cohen model', in which the axiom of choice fails for a countable sequence of pairs (see~\cite{Kan08}).

Let $\C$ be Cohen forcing for adding a member of the space $((2^\N)^2)^\N$. A condition in $\C$ is a finite sequence $p$ so that for each $n\in\dom p$, $p(n)=(p(n)(0),p(n)(1))$ is a pair of finite binary sequences. 
Equivalently, we may think of $p$ as a finite partial function from $\N\times 2 \times \N$ to $2$. Say that a condition $p$ extends $q$ if $p$ extends $q$ as a partial function.\footnote{$\C$ is (equivalent to) the topological version of Cohen forcing for the space $((2^\N)^2)^\N$.}

A generic filter $G\subset \C$ is naturally identified with a `generic sequence' $x=\seqq{x(n)}{n\in\N}\in((2^\N)^2)^\N$. 
Let $x\in((2^\N)^2)^\N$ be $\C$-generic over $V$. For $n\in\N$ set
\begin{equation*}
Z_n=\{[x(n)(0)]_{E_0},[x(n)(1)]_{E_0}\}.
\end{equation*}
Let $\mathcal{Z}=\seqq{Z_n}{n\in\N}$. Consider the model
\begin{equation*}
V(\mathcal{Z}) = V(\seqq{Z_n}{n\in\N})\subseteq V[x].
\end{equation*}
This is, essentially, `the second Cohen model'.
\begin{fact}[$V(\seqq{Z_n}{n\in\N})$]\label{fact: no choice in second Cohen}
$\prod_{n\in\N}Z_n=\emptyset$.
\end{fact}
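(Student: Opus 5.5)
Suppose toward a contradiction that $c\in\prod_{n\in\N}Z_n$ exists in $V(\mathcal{Z})$. By Fact~\ref{fact;definability-V(A)}, applied with $\mathcal{Z}$ in place of $A$, there are a formula $\psi$, a parameter $v\in V$, and finitely many parameters from the transitive closure of $\mathcal{Z}$ such that $c$ is the unique set satisfying $\psi(c,\mathcal{Z},\bar{a},v)$ in $V(\mathcal{Z})$. The elements of $\mathrm{tc}(\mathcal{Z})$ are the $Z_n$, the $E_0$-classes $[x(n)(i)]_{E_0}$, and individual reals in those classes. Any such finite tuple can be coded by a finite tuple of reals $x(n)(i)$ (up to $E_0$) together with ground model data. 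So we may assume $\bar{a}$ only involves coordinates $n<N$ for some $N$. Fix $n\geq N$; then $c(n)$ is one of the two classes in $Z_n$, say $c(n)=[x(n)(0)]_{E_0}$.

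The plan is to use a homogeneity argument on $\C$, working in $V[x]$. Let $\pi$ be the automorphism of $\C$ that swaps the two coordinates $p(n)(0)$ and $p(n)(1)$ for this one $n$. This automorphism preserves the names for $\mathcal{Z}$ and for each $Z_m$, since it only permutes the pair inside $Z_n$. It also fixes the names for the parameters $\bar{a}$, since these involve only coordinates $<N$, and it fixes $\check{v}$. The intended contradiction is that $\pi$ should move the name for $c(n)$ while fixing everything that defines it.

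To make this precise, take a condition $p\in\C$ forcing that $V(\dot{\mathcal{Z}})\models\psi(\dot{c},\dot{\mathcal{Z}},\dot{\bar{a}},\check{v})$, that $\dot c(n)=[\dot{x}(n)(0)]_{E_0}$, and that $[\dot x(n)(0)]_{E_0}\neq[\dot x(n)(1)]_{E_0}$. Here the name $\dot c$ is taken to be the canonical name for the unique object defined by $\psi$ in $V(\dot{\mathcal Z})$, so that it is determined by the names of its defining parameters. Since $p$ mentions only finitely much of $x(n)$, modify $\pi$ so that it is compatible with $p$:
\begin{itemize}
  \item on $\dom p(n)(0)\cup\dom p(n)(1)$, where the two coordinates may disagree, also flip the finitely many bits at which they differ;
  \item on all other positions, simply swap the two coordinates.
\end{itemize}
This modified automorphism $\pi'$ still sends $[x(n)(0)]_{E_0}$ to $[x(n)(1)]_{E_0}$ and vice versa, because finite changes do not affect $E_0$-classes. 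It still fixes the names for $\mathcal Z$, $\bar a$ and $v$. Moreover, $\pi'(p)$ is compatible with $p$, since we arranged that $\pi'$ maps the finite parts of $p$ into agreement.

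Because $\pi'$ fixes every parameter in $\psi$, it fixes the canonical name $\dot c$. Applying $\pi'$ to $p\Vdash \dot c(n)=[\dot x(n)(0)]_{E_0}$ gives $\pi'(p)\Vdash \dot c(n)=[\dot x(n)(1)]_{E_0}$. A common extension of $p$ and $\pi'(p)$ would then force both equalities, contradicting that the two classes are forced to be distinct.

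The main obstacle is the bookkeeping step: verifying that an arbitrary parameter from $\mathrm{tc}(\mathcal{Z})$ can be taken to involve only finitely many coordinates, with names fixed by $\pi'$. For individual reals $y\in[x(m)(i)]_{E_0}$ we have $y=^* x(m)(i)$. So $y$ has a name built from $\dot x(m)(i)$ and a finite ground model modification, and it is fixed by $\pi'$ as long as $m\neq n$, which holds when $m<N\le n$.
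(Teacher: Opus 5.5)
Your argument is correct, but it is organized differently from the paper. The paper gives no direct proof of this Fact. It obtains the Fact as a corollary of Lemma~\ref{lemma: no new sets second cohen}: if some $c\in\prod_n Z_n$ were in $V(\mathcal Z)$, then for the $\P$-generic choice function $z$ the set $\set{n}{z(n)=c(n)}$ would, by a density argument, be a new subset of $\N\subseteq V$. That lemma in turn rests on the Indiscernibility Lemma~\ref{lem: indiscernibility second Cohen}. Its proof is exactly your mechanism: swap the two coordinates at the relevant indices, then make finitely many bit changes to meet the given condition, which leaves every $E_0$-class and hence $\mathcal Z$ unchanged. The paper phrases this by passing from the generic $x$ to another generic $y$ rather than through an automorphism of $\C$.

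Your proof is thus the indiscernibility argument inlined and applied directly to $c(n)$ versus the other element of $Z_n$. It is shorter and self-contained. The paper's route instead packages the homogeneity once and gets the Fact for free from the stronger ``no new subsets of $V$'' statement that its applications actually need.

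Two small remarks. First, your automorphism $\pi'$ fixes the names for $\mathcal Z$, $\bar a$ and $\dot c$ only up to forced equality, but that is all the symmetry argument uses. Second, your explicit treatment of individual reals from $\mathrm{tc}(\mathcal Z)$ as parameters is slightly more careful than the paper. You note that each such real is a finite modification of some $x(m)(i)$, hence supported on coordinate $m$; the paper simply takes the parameters to be members of $\bigcup_n Z_n$.
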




The key point is that the two members of each pair are sufficiently indiscernible.

\begin{lemma}[Indiscernibility]\label{lem: indiscernibility second Cohen}
Let $\phi$ be a formula, $v\in V$, and $\bar{a}=a_0,\ldots,a_{k-1}$ a finite sequence of members of $\bigcup_{n\in\N}Z_n$ including at most one member of each $Z_n$. Suppose that $\phi(\mathcal Z,\bar{a},v)$ holds in $V(\mathcal Z)$.
Then for any $b_0,\ldots,b_{k-1}\in\bigcup_{n\in\N}Z_n$ so that $b_i\in Z_n$ if and only if $a_i\in Z_n$, the formula $\phi(\mathcal Z,\bar{b},v)$ holds in $V(\mathcal Z)$ as well.
\end{lemma}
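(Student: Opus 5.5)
Fix $v\in V$, a formula $\phi$, and $\bar a$ as in the statement, and fix $\bar b$ with $b_i\in Z_n\iff a_i\in Z_n$. The plan is to find an automorphism of the Cohen poset $\C$ that induces a map sending $\bar a$ to $\bar b$ while fixing $\mathcal Z$ and $V$. We then transfer the truth of $\phi$ along this automorphism using the usual symmetry argument for forcing.

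Let $N\subseteq\N$ be the finite set of indices $n$ with $a_i\neq b_i$ for the unique $i$ such that $a_i\in Z_n$. For $n\in N$, $b_i$ is then the other member of $Z_n$. Let $\pi=\pi_N$ be the automorphism of $\C$ that swaps the two coordinates $p(n)(0)\leftrightarrow p(n)(1)$ for $n\in N\cap\dom p$ and is the identity elsewhere. It maps the generic $x$ to $x'$, where $x'(n)=(x(n)(1),x(n)(0))$ for $n\in N$ and $x'(n)=x(n)$ otherwise. Then $x'$ is $\C$-generic, $V[x']=V[x]$, and the sequence of pairs computed from $x'$ is literally the same $\mathcal Z$ (each $Z_n$ is an unordered pair). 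Hence $V(\mathcal Z)$ computed from $x'$ is the same model. The swap exchanges the names for $[x(n)(0)]_{E_0}$ and $[x(n)(1)]_{E_0}$ for $n\in N$, so it exchanges $a_i$ and $b_i$ for those $n$ and fixes the rest.

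To make this rigorous, choose canonical $\C$-names: $\dot{\mathcal Z}$ for $\mathcal Z$, and $\dot c_{n,j}$ for $[x(n)(j)]_{E_0}$. We have $\pi\dot{\mathcal Z}=\dot{\mathcal Z}$ and $\pi\dot c_{n,j}=\dot c_{n,1-j}$ for $n\in N$, with the other $\dot c_{n,j}$ unchanged. The statement ``$V(\mathcal Z)\models\phi(\mathcal Z,\bar a,v)$'' is a first-order statement in $V[x]$ about $\mathcal Z,\bar a,v$ and the ground model. Indeed, $V(\mathcal Z)$ is uniformly definable in $V[x]$ from $\mathcal Z$ and the parameter $V$: for instance, it is the union of $L(\mathcal Z,w)$ for $w\in V$, or equivalently $\hod_{V,\mathrm{tc}\{\mathcal Z\}}$ of that union, as in Fact~\ref{fact;definability-V(A)}. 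So we get a formula $\Phi$ such that $V(\mathcal Z)\models\phi(\mathcal Z,\bar a,v)$ iff $V[x]\models\Phi(\mathcal Z,\bar a,v,V)$, where $V$ is a definable class of $V[x]$ (ground model definability). Let $\dot{\bar a}$ be the tuple of names for $\bar a$.

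Some $p\in G$ forces $\Phi(\dot{\mathcal Z},\dot{\bar a},\check v)$. Applying $\pi$, the condition $\pi p$ forces $\Phi(\dot{\mathcal Z},\pi\dot{\bar a},\check v)$, and the interpretation of $\pi\dot{\bar a}$ by $G$ is $\bar b$. The main obstacle is that $\pi p$ need not lie in $G$, so this alone gives nothing. To fix it, I first show that weaker conditions already decide the statement. The argument is a homogeneity one, as in the proof of Lemma~\ref{lemma: no new reals adding one subset}. Let $F=\{n: Z_n \text{ is hit by }\bar a\}$. For any finite set $M$ disjoint from $F$, swaps $\pi_M$ fix $\dot{\mathcal Z}$ and $\dot{\bar a}$. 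Likewise, finite modifications of coordinates $x(n)(j)$ beyond the domain fix the $E_0$-classes, and hence fix all these names. Together these automorphisms act weakly homogeneously below the restriction of $p$ to coordinates in $F$, and even at finitely many bits within those coordinates they can be moved away by $E_0$-invariance. So $\Phi(\dot{\mathcal Z},\dot{\bar a},\check v)$ is decided by the trivial condition. Hence $\mathbf 1\force\Phi(\dot{\mathcal Z},\dot{\bar a},\check v)$, and applying $\pi$ gives $\mathbf 1\force\Phi(\dot{\mathcal Z},\pi\dot{\bar a},\check v)$. Therefore $\phi(\mathcal Z,\bar b,v)$ holds in $V(\mathcal Z)$.
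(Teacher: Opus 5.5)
Your proposal is correct and is essentially the paper's argument. The paper replaces $x$ by a generic $y$, obtained by swapping the relevant coordinates and then altering finitely many bits so that $y$ meets the condition $p$; this leaves every $E_0$-class, hence $\mathcal Z$ and $V[x]$, unchanged. You encode the same two moves as automorphisms of $\C$, using finite bit-flips to show that $\mathbf 1$ already decides the statement before applying the swap $\pi$. (Two simplifications: the swaps $\pi_M$ off $F$ are superfluous, since bit-flips alone give weak homogeneity. And instead of ground-model definability you can use the forcing language with a predicate for $V$, as the paper does with $\phi^{V(\dot{\mathcal Z})}$.)
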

\begin{proof}
First note that if $Z_n=\{a,b\}$ then $b$ is definable from $Z_n$ and $a$. Thus we may assume that $a_i=[x(n_i)(0)]_{E_0}$ for some $n_0,\ldots,n_{k-1}\in\N$.

By assumption, $\phi^{V(\mathcal Z)}(\mathcal Z,\bar{a},v)$ holds in $V[x]$. Fix a condition $p\in\C$ forcing that
\begin{equation*}
\text{$\phi^{V(\dot{\mathcal Z})}(\dot{\mathcal Z},[\dot{x}(n_0)(0)]_{E_0},\ldots,[\dot{x}(n_{k-1})(0)]_{E_0},\check{v})$}.
\end{equation*}
We can find a $\C$-generic sequence $y$ over $V$ such that:
\begin{enumerate}
\item $[y(n_i)(0)]_{E_0}=b_i$ for $i<k$; and
\item $y$ extends the condition $p$.
\end{enumerate}
The first requirement is achieved by swapping $x(n_i)(0), x(n_i)(1)$ for $i<k$ with $a_i\neq b_i$. The second requirement is then achieved by making finitely many changes to finitely many $y(n)(j)\in 2^\N$.
Note that this second step does not change $[y(n)(j)]_{E_0}$ for any $n,j$ and in particular does not change $\mathcal{Z}$. The first step also fixes $\mathcal{Z}$.

Finally, since $y$ extends $p$, working in $V[y]$ we have that
\begin{equation*}
\phi^{V(\mathcal Z)}(\mathcal Z,[y(n_0)(0)]_{E_0},\ldots,[y(n_{k-1})(0)]_{E_0},v),
\end{equation*}
that is, $\phi^{V(\mathcal Z)}(\mathcal Z,b_0,\ldots,b_{k-1},v)$, as required.
\end{proof}

The key technical point, used in \S\ref{subsec: second cohen model}, was Lemma~\ref{lemma: no new sets second cohen}. Note that the Lemma itself witnesses failure of choice and implies Fact~\ref{fact: no choice in second Cohen}. We provide a proof here.

\begin{proof}[Proof of Lemma~\ref{lemma: no new sets second cohen}]
Suppose $\tau$ is a $\P$-name for a subset of $V$, $\tau\in V(\mathcal Z)$. As in Fact~\ref{fact;definability-V(A)}, let $\bar{a}$ be a finite sequence of members of $\bigcup_n Z_n$, $w\in V$ a parameter, and $\psi$ a formula, so that in $V(\mathcal Z)$, $\tau$ is defined as the  unique solution to $\psi(\tau,\mathcal Z,\bar{a},w)$. We may assume that $\bar{a}$ includes at most one member out of each $Z_n$. Let $a\subseteq\N$ be a finite set so that $\bar{a}$ consists of members of $\bigcup_{n\in a}Z_n$.

We will show that for any condition $p\in\P$ and any $v\in V$, if $p$ forces $\check{v}\in\tau$ then $p\rest a$ forces the same. It then follows that for any generic filter $G\subset\P$ over $V(\mathcal Z)$, the values of $\tau[G]$ are determined by the single condition $q=G\rest a$. That is, $\tau[G]$ may be defined in $V(\mathcal Z)$ as the set of all $v$ for which $q\force\check{v}\in\tau$.

Fix a condition $p\in\P$ and $v\in V$ so that $p\force\check{v}\in\tau$, and let $r$ be any condition extending $p\rest a$. We need to show that $r\force \check{v}\in\tau$. We may assume that $\dom r$ extends $\dom p$. Let $q$ be the restriction of $r$ to the domain of $p$. It suffices to prove that $q\force \check{v}\in\tau$.
Let $p^*, q^*$ be the restrictions of $p, q$ to $\dom p\setminus a$.

Note that the statement `$p\force \check{v}\in \tau$' can be written in $V(\mathcal{Z})$ as $\phi(\mathcal{Z},\bar{a},p^*,v,w)$ for some formula $\phi$. 
Applying the indicernibility Lemma~\ref{lem: indiscernibility second Cohen}, we conclude that $\phi(\mathcal{Z},\bar{a},q^*,v,w)$, implying that $q\force \check{v}\in \tau$, as required.
\end{proof}

\subsection{Another failure of choice for a sequence of finite sets}\label{Appendix: cyclic permutations model}

We construct here a model in which the axiom of choice fails for a countable sequence of finite sets, yet for each $N\in\N$, `choice for sequences of sets of size $\leq N$' holds.

Let $\C$ be the poset for adding a member of the space $\prod_{p\textrm{ prime}}(2^\N)^p$ by finite approximations. A condition in $\C$ is a finite function $p$ whose domain is a finite set of primes, so that for each prime $r\in\dom p$, $p(r)=(p(r)(0),\ldots,p(r)(r-1))$ is an $r$-tuple of finite binary sequences. Equivalently, we may think of $p$ as a finite partial function from $\set{(r,j,i)\in\N^3}{r\textrm{ prime},\,j<r}$ to $2$. Say that a condition $p$ extends $q$ if $p$ extends $q$ as a partial function.\footnote{$\C$ is (equivalent to) the topological version of Cohen forcing for the space $\prod_{p\textrm{ prime}}(2^\N)^p$.}

A generic filter $G\subset \C$ is naturally identified with a `generic sequence' $x=\seqq{x_p}{p\textrm{ prime}}$, with $x_p=(x_p(0),\ldots,x_p(p-1))\in(2^\N)^p$.
Let $x$ be $\C$-generic over $V$. For each prime $p$ set
\begin{equation*}
A_p=\set{\seqq{[x_p(k+i\mod p)]_{E_0}}{i<p}}{k<p}.
\end{equation*}
That is, $A_p$ is the set of cyclic permutations of the $p$-tuple $\seq{[x_p(0)]_{E_0},\ldots,[x_p(p-1)]_{E_0}}$.
Let $\mathcal A=\seqq{A_p}{p\textrm{ prime}}$. Consider the model
\begin{equation*}
V(\mathcal A) = V(\seqq{A_p}{p\textrm{ prime}})\subseteq V[x].
\end{equation*}
\begin{fact}[$V(\seqq{A_p}{p\textrm{ prime}})$]\label{fact: no choice in CRT model}
$\prod_{p}A_p=\emptyset$.
\end{fact}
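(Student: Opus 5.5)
We argue in $V(\mathcal A)$ and suppose toward a contradiction that $c\in\prod_p A_p$. By Fact~\ref{fact;definability-V(A)}, $c$ is the unique set satisfying $\psi(c,\mathcal A,\bar a,v)$. Here $v\in V$, and $\bar a$ is a finite tuple from the transitive closure of $\mathcal A$. Each such element is determined by finitely many $E_0$-classes $[x_r(j)]_{E_0}$ and finitely many tuples in $A_r$, so we may assume $\bar a$ is a finite tuple of members of $A_{r}$ for $r$ in a finite set $F$ of primes. Adding to $\bar a$ one chosen member of each $A_r$ with $r\in F$ changes nothing, since the chosen tuple defines everything else in $A_r$. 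Fix a prime $p\notin F$. Then $c(p)$ is definable from $\mathcal A,\bar a,v$ via $\psi$.

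The main step is an indiscernibility lemma analogous to Lemma~\ref{lem: indiscernibility second Cohen}. Suppose $\phi(\mathcal A,\bar a,v)$ holds in $V(\mathcal A)$, and $\bar b$ is obtained from $\bar a$ by replacing the entries from $A_p$ with their cyclic rotation by some fixed $k$, leaving the other coordinates alone. Then $\phi(\mathcal A,\bar b,v)$ holds as well. Apply this with $\bar a$ extended by the chosen $c(p)$ itself: the formula ``$c$ satisfying $\psi$ has $c(p)=$ this element'' holds. Rotating gives that the unique $c$ also has $c(p)$ equal to the rotation by $1$ of that element. Since $p\ge2$ and the $E_0$-classes $[x_p(i)]_{E_0}$ are distinct (by genericity), the rotation is a different tuple, a contradiction. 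Alternatively, apply the lemma directly to the formula $\psi(c,\ldots)\wedge c(p)=t$ with $t$ a parameter, viewing $t$ as an entry of $\bar a$ from $A_p$.

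To prove the lemma, copy the argument for Lemma~\ref{lem: indiscernibility second Cohen}. Take $p_0\in\C$ forcing $\phi^{V(\dot{\mathcal A})}(\dot{\mathcal A},\dot{\bar a},\check v)$, where $\dot{\bar a}$ names the relevant tuples by their indices, for example the rotation by $k_r$ of $([x_r(0)],\dots)$. Let $y$ be $x$ with the coordinate $x_p$ cyclically rotated by $k$. This is an automorphism of the Cohen poset, so $y$ is $\C$-generic and yields the same $\mathcal A$, while the name $\dot{\bar a}$ evaluates to $\bar b$. Then make finitely many finite changes to the binary sequences so that $y$ extends $p_0$. This preserves all $E_0$-classes, hence preserves $\mathcal A$ and $\bar b$, and keeps $y$ generic. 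We conclude $V[y]\models\phi^{V(\mathcal A)}(\mathcal A,\bar b,v)$, and $V(\mathcal A)$ computed in $V[y]$ is the same model.

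I expect the main obstacle to be the reduction of arbitrary parameters in $\mathrm{tc}\{\mathcal A\}$ to finitely many members of the $A_r$. This requires checking that individual reals in the $E_0$-classes appearing in the transitive closure do not break the argument. If such reals are allowed as parameters, we handle them as in the proof: reals in $[x_r(j)]$ for $r\ne p$ are untouched by rotating at $p$. A real $z\in[x_p(j)]$ would pin down a specific class. To avoid this, choose $p$ large enough to also avoid the finitely many primes whose classes contain real parameters, and represent each such real by finitely modifying a name for $x_r(j)$. Finite modifications are compatible with the "make $y$ extend $p_0$" step, since that step changes only bits not fixed by the parameter names.
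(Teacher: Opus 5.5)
Your argument is correct, but it takes a different route from the paper. The paper gives no separate proof of this Fact. It obtains it as a consequence of Lemma~\ref{lemma: no new sets CRT model}. Suppose some $c\in\prod_p A_p$ existed in $V(\mathcal A)$. In a $\P$-generic extension, let $k_p<p$ be the offset with $\dot y(p)$ equal to the rotation of $c(p)$ by $k_p$. The set $\{(p,k_p)\}$ would then be a new subset of $\omega\times\omega$: by density it differs from every sequence in $V(\mathcal A)$, since $|A_p|\geq 2$. This contradicts that lemma, whose own proof runs through the full indiscernibility Lemma~\ref{lem: indiscernibility CRT model}.

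You instead apply a symmetry argument directly to the definition of $c$. You pick a prime $p$ untouched by every parameter and rotate only coordinate $p$ of the Cohen generic. What your route buys:
\begin{itemize}
\item It needs only a weak special case of indiscernibility, namely one cyclic rotation on one fresh coordinate.
\item It avoids forcing over $V(\mathcal A)$ altogether.
\item It handles the individual reals in $\mathrm{tc}\{\mathcal A\}$ explicitly. The paper's analogous arguments pass over them by taking parameters only from $\bigcup_p A_p$.
\end{itemize}
Your repair step works because $p_0$ is taken from the generic filter of $x$, so the rotated $y$ disagrees with $p_0$ only on coordinate $p$. State this explicitly, since it is exactly why the names for your real parameters are unaffected.

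The paper's route costs nothing extra, because the lemma is needed anyway for \S\ref{subsec: Z shift}. It also delivers the stronger statement that adding a generic choice function adds no new subsets of $V$.

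One small point: in your ``alternatively'' version, $c$ is not an allowed parameter. The formula should be $\exists c\,(\psi(c,\mathcal A,\bar a,v)\wedge c(p)=t)$, which works by uniqueness of $c$.
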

\begin{remark}
    It can be shown that in $V(\mathcal{A})$, for any $N\in\N$, if $\seqq{B_n}{n\in\N}$ is a sequence so that $|B_n|\leq N$ for all $n$, then $\prod_n B_n\neq\emptyset$.
\end{remark}

As before, we will rely on the fact that the members of each $A_p$ are sufficiently indiscernible.

\begin{lemma}[Indiscernibility]\label{lem: indiscernibility CRT model}
Let $\phi$ be a formula, $v\in V$, and $\bar{a}=a_0,\ldots,a_{m-1}$ a finite sequence of members of $\bigcup_{p\textrm{ prime}}A_p$, including at most one member of each $A_p$. Suppose that $\phi(\mathcal A,\bar{a},v)$ holds in $V(\mathcal A)$.
Then for any $b_0,\ldots,b_{m-1}\in\bigcup_{p\textrm{ prime}}A_p$ so that $b_i\in A_p$ if and only if $a_i\in A_p$, the formula $\phi(\mathcal A,\bar{b},v)$ holds in $V(\mathcal A)$ as well.
\end{lemma}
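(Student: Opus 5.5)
We follow the proof of Lemma~\ref{lem: indiscernibility second Cohen}, with cyclic rotations of the coordinates of $x_p$ playing the role of the swaps there. The first step is a normalization. Each $a_i\in A_{p_i}$ is a cyclic rotation $k_i\cdot\bar{\alpha}_{p_i}$, where $\bar{\alpha}_{p}=\seq{[x_p(0)]_{E_0},\ldots,[x_p(p-1)]_{E_0}}$. The statement is symmetric under relabeling, so we may aim to show: for any integers $k_0,\dots,k_{m-1}$, the truth of $\phi(\mathcal A, k_0\cdot\bar\alpha_{p_0},\dots,k_{m-1}\cdot\bar\alpha_{p_{m-1}},v)$ in $V(\mathcal A)$ does not depend on the $k_i$. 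Since $k\cdot\bar\alpha_p$ is definable from $\bar\alpha_p$ and $k$ (a natural number in $V$), it suffices to show that rotating $\bar\alpha_{p_i}$ by arbitrary amounts $k_i$ preserves $\phi$.

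Fix $p_0,\dots,p_{m-1}$ (distinct, by assumption) and suppose $V(\mathcal A)\models\phi(\mathcal A,\bar\alpha_{p_0},\dots,\bar\alpha_{p_{m-1}},v)$. Then $V[x]\models \phi^{V(\mathcal A)}(\ldots)$, so some condition $q\in\C$ in the generic filter forces
\begin{equation*}
\phi^{V(\dot{\mathcal A})}(\dot{\mathcal A},\dot{\bar\alpha}_{p_0},\ldots,\dot{\bar\alpha}_{p_{m-1}},\check v),
\end{equation*}
where $\dot{\bar\alpha}_p$ names the tuple of $E_0$-classes of $\dot x_p$.

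We now build a second generic $y$. Given target rotations $k_i$, define $y$ from $x$ in two steps.
\begin{enumerate}
\item Let $y_{p_i}(j)=x_{p_i}(j+k_i \bmod p_i)$ and leave all other coordinates unchanged.
\item Modify finitely many bits of finitely many coordinates $y_r(j)$ so that $y$ extends $q$.
\end{enumerate}
Both operations are induced by automorphisms of $\C$ that lie in $V$: permuting coordinates within a block, and flipping finitely many bits. Hence $y$ is $\C$-generic over $V$ and $V[y]=V[x]$.

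Step (1) sends each tuple $\bar\alpha_{p}$ to a cyclic rotation of itself, so every $A_p$ is unchanged. Step (2) changes no $E_0$-classes. Therefore $\mathcal A^y=\mathcal A$, the model $V(\mathcal A)$ is unchanged, and $\bar\alpha^y_{p_i}=k_i\cdot\bar\alpha_{p_i}$. Since $y$ extends $q$, we get $V[y]\models\phi^{V(\mathcal A)}(\mathcal A,k_0\cdot\bar\alpha_{p_0},\ldots,v)$, which is the desired conclusion.

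The main obstacle is the bookkeeping: we must check that cyclic shifts within a block really are automorphisms of $\C$ preserving $\mathcal A$. This holds precisely because $A_p$ is defined as the full cyclic orbit. The same step also shows that we may rotate only the finitely many relevant blocks independently, which is the point where finiteness of $\bar a$ is used. The remaining genericity-preservation claims are routine.
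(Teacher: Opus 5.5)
Your proposal is correct and is essentially the argument the paper intends. The paper omits this proof and says only that it mirrors Lemma~\ref{lem: indiscernibility second Cohen}, and you carry out exactly that adaptation: normalize to the canonical tuples $\bar\alpha_{p_i}$, fix a condition forcing $\phi$, and build a second generic $y$ with $V[y]=V[x]$ by cyclically rotating the distinct relevant blocks (in place of the swaps) and then changing finitely many bits to meet the condition, which leaves $\mathcal A$ unchanged.
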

We omit the proof here as it is quite similar to the arguments in Section~\ref{Appendix: second cohen model}.
The key technical point used in \S\ref{subsec: Z shift} was Lemma~\ref{lemma: no new sets CRT model}. Note that the lemma itself witnesses failure of choice and implies Fact~\ref{fact: no choice in CRT model}. We omit the proof as it is quite similar to the proof of Lemma~\ref{lemma: no new sets second cohen} in Section~\ref{Appendix: second cohen model}.

\bibliographystyle{alpha}
\bibliography{bibliography}

\end{document}